\def\E{\ifmmode{\mathbb E}\else{$\mathbb E$}\fi} %natural numbers
\def\N{\ifmmode{\mathbb N}\else{$\mathbb N$}\fi} %natural numbers%
\def\R{\ifmmode{\mathbb R}\else{$\mathbb R$}\fi} %real numbers
\def\Q{\ifmmode{\mathbb Q}\else{$\mathbb Q$}\fi} %rational numbers
\def\C{\ifmmode{\mathbb C}\else{$\mathbb C$}\fi} %complex numbers
\def\H{\ifmmode{\mathbb H}\else{$\mathbb H$}\fi} %complex numbers
\def\Z{\ifmmode{\mathbb Z}\else{$\mathbb Z$}\fi} %integers
\def\P{\ifmmode{\mathbb P}\else{$\mathbb P$}\fi} %real numbers
\def\T{\ifmmode{\mathbb T}\else{$\mathbb T$}\fi} %real numbers
\def\SS{\ifmmode{\mathbb S}\else{$\mathbb S$}\fi} %real numbers
\def\DD{\ifmmode{\mathbb D}\else{$\mathbb D$}\fi} %real numbers
\renewcommand{\i}{\iota}
\newcommand{\del}{\partial}
\newcommand{\ben}{\begin{enumerate}}
\newcommand{\een}{\end{enumerate}}
\newcommand{\be}{\begin{equation}}
\newcommand{\ee}{\end{equation}}
\newcommand{\bea}{\begin{eqnarray}}
\newcommand{\eea}{\end{eqnarray}}
\newcommand{\beastar}{\begin{eqnarray*}}
\newcommand{\eeastar}{\end{eqnarray*}}
\newcommand{\bc}{\begin{center}}
\newcommand{\ec}{\end{center}}
\newtheorem{thm}{Theorem}[section]
\newtheorem{cor}[thm]{Corollary}
\newtheorem{lem}[thm]{Lemma}
\newtheorem{prop}[thm]{Proposition}
\theoremstyle{definition}
\newtheorem{defn}[thm]{Definition}
\newtheorem{rem}[thm]{Remark}
\newtheorem{cond}[thm]{Condition}
\newtheorem{ques}[thm]{Question}
\newtheorem*{thm*}{Theorem}
\numberwithin{equation}{section}
\def\R{{\mathbb R}}
\def\E{{\mathbb E}}
\def\Z{{\mathbb Z}}
\def\C{{\mathbb C}}
\def\R{{\mathbb R}}
\def\P{{\mathbb P}}
\def\N{{\mathbb N}}
\def\11{{\mathbb I}}
\def\H{\mathbb{H}}
\def\C{\mathbb{C}}
\def\Z{\mathbb{Z}}
\def\T{\mathbb{T}}
\def\Q{\mathbb{Q}}
\def\E{\ifmmode{\mathbb E}\else{$\mathbb E$}\fi} %natural numbers
\def\N{\ifmmode{\mathbb N}\else{$\mathbb N$}\fi} %natural numbers
\def\R{\ifmmode{\mathbb R}\else{$\mathbb R$}\fi} %real numbers
\def\Q{\ifmmode{\mathbb Q}\else{$\mathbb Q$}\fi} %rational numbers
\def\C{\ifmmode{\mathbb C}\else{$\mathbb C$}\fi} %complex numbers
\def\Z{\ifmmode{\mathbb Z}\else{$\mathbb Z$}\fi} %integers
\def\P{\ifmmode{\mathbb P}\else{$\mathbb P$}\fi} %real numbers
\def\CS{\ifmmode{\mathbb S}\else{$\mathbb S$}\fi} %real numbers
\def\DD{\ifmmode{\mathbb D}\else{$\mathbb D$}\fi} %real numbers
\def\R{{\mathbb R}}
\def\E{{\mathbb E}}
\def\Z{{\mathbb Z}}
\def\C{{\mathbb C}}
\def\R{{\mathbb R}}
\def\N{{\mathbb N}}
\def\CA{{\mathcal A}}
\def\CG{{\mathcal G}}
\def\CH{{\mathcal H}}
\def\CL{{\mathcal L}}
\def\CS{{\mathcal S}}
\def\CX{{\mathcal X}}
\def\darr#1{\raise1.5ex\hbox{$\leftrightarrow$}
\mkern-16.5mu #1}
\def\roughly#1{\raise.3ex\hbox{$#1$\kern-.75em
\lower1ex\hbox{$\sim$}}}
\def\opname#1{\mathop{\kern0pt{\rm #1}}\nolimits}
\def\Im{\opname{Im}}
\def\vol{\opname{vol}}
\def\dudt{\frac{\partial u}{\partial t}}
\def\supp{\operatorname{supp}}
\def\Spec{\operatorname{Spec}}
\begin{document}

\title[Lagrangian spectral invariants]{
Exact Lagrangian submanifolds, Lagrangian spectral invariants and Aubry--Mather theory}

\author{Lino Amorim}
\address{Department of Mathematics and Statistics, Boston University, USA}
\email{lamorim@bu.edu}

\author{Yong-Geun Oh}
\address{Center for Geometry and Physics, Institute for Basic Sciences (IBS),
Pohang 37673, Korea \&
Department of Mathematics, POSTECH, Pohang 37673, KOREA}
\email{yongoh1@postech.ac.kr}

\author{Joana Oliveira dos Santos}
\address{Department of Mathematics and Statistics, Boston University, USA}
\email{jamorim@bu.edu}

\thanks{The first-named author was supported by EPSRC grant EP/J016950/1. The second-named author is supported by the IBS project \# IBS-R003-D1.}

%\date{}

\begin{abstract}
We construct graph selectors for compact exact Lagrangians in the cotangent bundle of an orientable, closed manifold. The construction combines Lagrangian spectral invariants developed by Oh and results by Abouzaid about the Fukaya category of a cotangent bundle. We also introduce the notion of Lipschitz-exact Lagrangians and prove that these admit an appropriate generalization of graph selector. We then, following Bernard--Oliveira dos Santos, use these results to give a new characterization of the Aubry and Ma\~n\'e sets of a Tonelli Hamiltonian and to generalize a result of Arnaud on Lagrangians invariant under the flow of such Hamiltonians.
\end{abstract}

\keywords{Cotangent bundle, Lipschitz-exact Lagrangian submanifold, wrapped Fukaya category,
(generalized) graph selector, Tonelli Hamiltonian, Aubry--Mather theory}

\maketitle

\medskip

\tableofcontents

\section{Introduction}

Consider the cotangent bundle $T^*M$ of a closed orientable $n$-manifold $M$. Let $\theta$ be the canonical Liouville one-form in $T^*M$. A Lagrangian embedding $\iota: N \to T^*M$ is called
exact if $\iota^*\theta$ is an exact one-form, that is, $\iota^*\theta = dS$ for some function $S:N \to \R$.
We call any such function a \emph{Liouville primitive} of the exact Lagrangian embedding
$\iota: N \to T^*M$. We will denote
$$
L = \iota(N)
$$
as a submanifold of $T^*M$. In the present paper, we will distinguish the domain of an embedding $\iota$ and
its image by different letters by exclusively denoting
$
L = \iota(N)
$
while $N$ denotes an abstract $n$-manifold. We also denote by $i:L \hookrightarrow T^*M$
the inclusion map and by $h_L$ the function
$h_L: L \to \R$ such that
\be\label{eq:h}
h_L(y) = S \circ \iota^{-1}(y), \quad y \in L.
\ee
We will often omit the sub-index $L$ and denote $h = h_L$.
This is nothing but the
Liouville primitive for the inclusion $i:L \to T^*M$, that is  $i^*\theta = dh_L$.

In many problems in the symplectic topology and Hamiltonian dynamics (see \cite{PPS} and \cite{bernard-santos}, for example) it is crucial to construct \emph{graph selectors} for compact, exact Lagrangian submanifolds of $T^*M$.

\begin{defn}
	Let $\iota: N \to T^*M$ be a compact, exact Lagrangian embedding with Liouville primitive $S$. A \emph{graph selector} for $L$ is a Lipschitz function
	$f:M\to\mathbb{R}$ such that $f$ is differentiable on a dense open set $\mathcal{U} \subset M$ of full measure
	and for all points $q \in \mathcal{U}$ we have
	$$
	(q,df(q))\in L \ \textrm{ and }\ f(q)= h_L(q,df(q)).
	$$	
\end{defn}
The second equation in this definition is equivalent to
\be\label{eq:fh}
f\circ \pi|_L = h_L
\ee
on $L$, where $\pi|_L$ is the restriction to $L$ of the projection $\pi:T^*M \to M$.

There are several proofs that graph selectors exist for Lagrangians that are Hamiltonian isotopic to the zero-section. The proofs by Chaperon \cite{chaperon}, Paternain--Polterovich--Siburg \cite{PPS} and Viterbo \cite{viterbo:generating} use \emph{generating functions} in the sense of classical mechanics or  micro-local analysis \cite{hormander}.
A more global definition of generating function of an exact Lagrangian
submanifold is given in \cite{sikorav:gen,viterbo:generating}, and it is proved by
Laudenbach--Sikorav \cite{laud-sikorav} that any Lagrangian Hamiltonian isotopic to the zero-section admits one.
However a general compact exact Lagrangian submanifold
is not known to have a generating function. Therefore this method of generating functions
cannot be applied to general exact Lagrangian submanifolds (at least for now).

There is a second method to construct graph selectors developed by the second-named author
in \cite{oh:jdg} (and in other later literature), that uses Floer theory instead. This relies on being able to compute the Floer homology of $L$ with a cotangent fiber. Then one exploits the filtration
present in the Floer complex to construct a spectral invariant which defines the function $f$, this is called the \emph{basic phase function}. One of the main purposes of the present paper is to extend these constructions to general exact Lagrangian submanifolds and to explore their applications to Aubry--Mather theory.

One of the outstanding problems in symplectic geometry is the \emph{Arnold nearby Lagrangian conjecture} which states that any compact exact Lagrangian is in fact Hamiltonian isotopic to the zero-section. Therefore a proof of this conjecture would immediately imply the existence of graph-selectors for any compact exact Lagrangian. This conjecture seems to be completely out of reach at the moment. However there have been spectacular advances in the understanding of this problem in the past few years, see \cite{abouzaid2, FSS, nadler, kra}. For example, it is now known that the projection from a compact exact Lagrangian to the zero-section is a homotopy equivalence.

For our purposes the most important  results are results by  Abouzaid \cite{abouzaid2} (using an argument due to \cite{FSS}) and Abouzaid--Kragh \cite{kra} that prove the Floer homology between any compact, exact Lagrangian submanifold $L$ and a fiber $T_q^*M$ is isomorphic to $\Z$.
This enables us to imitate constructions
given in \cite{oh:jdg,kasturi-oh1} and prove the following

\begin{thm}\label{gs}
	Let $L \subset T^*M$ be a closed smooth
	exact embedded Lagrangian submanifold. Then it admits a \emph{graph selector}.
\end{thm}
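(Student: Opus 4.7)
The plan is to adapt the Floer-theoretic construction of the basic phase function from \cite{oh:jdg, kasturi-oh1}, now applied to a general compact exact Lagrangian by using the Abouzaid and Abouzaid--Kragh computation $HF(L, T_q^*M)\cong\Z$ \cite{abouzaid2, kra} in place of the usual hypothesis that $L$ be Hamiltonian isotopic to the zero section.

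First I would set up the Lagrangian Floer complex $CF(L,T_q^*M)$ for a base point $q\in M$ at which $L$ is transverse to $T_q^*M$. Its generators are the finitely many intersection points $(q,p)\in L\cap T_q^*M$. Since $\theta$ vanishes on the linear fiber $T_q^*M$, the Liouville primitive of the fiber may be taken to be zero, so the natural action value of the generator $(q,p)$ reduces to $h_L(q,p)$, where $h_L$ is the Liouville primitive introduced in \eqref{eq:h}. The Floer differential strictly decreases action, giving $CF(L,T_q^*M)$ a natural action filtration.

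Next I would define the candidate graph selector as a Lagrangian spectral invariant. The isomorphism $HF(L,T_q^*M)\cong\Z$ singles out a canonical generator $[e_q]$ (the Floer-cohomological unit in this setting), and I set
\[
f(q)\;:=\;\rho\bigl([e_q];\,L, T_q^*M\bigr)\;=\;\inf\bigl\{\lambda\in\R\ \big|\ [e_q]\text{ admits a representative in } CF^{\le\lambda}(L,T_q^*M)\bigr\}.
\]
By the usual minimax principle for spectral invariants, this infimum is attained as the action of some intersection point, yielding
\[
f(q)\;=\;h_L(q,p_q)\quad\text{for some } p_q\in L\cap T_q^*M.
\]
Lipschitz continuity of $f$ then follows from standard Hofer-type estimates for spectral invariants: passing from $T_{q_0}^*M$ to $T_{q_1}^*M$ is effected by a Hamiltonian isotopy (translation by $q_1-q_0$ in a chart) whose Hofer norm is $O(|q_1-q_0|)$, so $|f(q_1)-f(q_0)|\le C|q_1-q_0|$ on any compact chart. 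Hence $f$ extends to a Lipschitz function on $M$.

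Finally, on the full-measure open set $\UU\subset M$ where $f$ is differentiable, an envelope-theorem argument of the type carried out in \cite{oh:jdg} identifies $(q,df(q))$ with the intersection point $(q,p_q)$ realizing the spectral value: essentially, small perturbations of $q$ produce nearby critical values $h_L(q',p)$ of nearby intersection points, and the derivative of the selector picks out the cotangent component of the selected one. This forces $(q,df(q))\in L$ and $f(q)=h_L(q,df(q))$, as required. The main obstacle is analytic rather than topological: one must check that the spectral invariant $\rho(\,\cdot\,; L, T_q^*M)$ is well-defined and Hofer-continuous for a general compact exact $L$, where one no longer has a generating function to fall back on. Once the key input $HF(L,T_q^*M)\cong\Z$ from \cite{abouzaid2, kra} is in place, the Floer-theoretic machinery of \cite{oh:jdg, kasturi-oh1}, adapted to the wrapped setup appropriate for the non-compact manifold $T^*M$, transfers essentially verbatim.
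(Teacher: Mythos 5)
Your proposal follows essentially the same route as the paper: wrapped Floer homology of the pair $(L,T_q^*M)$ with the Abouzaid and Abouzaid--Kragh input $HW_*(L,T_q^*M)\cong\Z$, the spectral invariant $\rho(L,q)$ as the basic phase function, a Lipschitz estimate obtained by moving the fiber $T_{q_0}^*M$ to $T_{q_1}^*M$ by a Hamiltonian isotopy of norm $O(d(q_0,q_1))$, and the identification $df(q)=p_q$ via the Liouville primitive. The one step that would fail as literally written is the last one: you assert the graph-selector identity ``on the full-measure open set $\mathcal{U}\subset M$ where $f$ is differentiable.'' For a Lipschitz $f$ the set of differentiability points has full measure by Rademacher but need not be open, and --- more seriously --- the paper notes (remark following Theorem \ref{floer gs}) that at differentiability points outside the Cerf-regular set the conclusion $(q,df(q))\in L$ can genuinely fail, because the selected intersection point may jump between sheets of $L$ carrying equal action. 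The repair your sketch needs is to take $\mathcal{U}$ to be the Cerf-regular set $\mathcal{U}_L^{Cf}$ of \eqref{eq:Cerf-regular}, i.e.\ the points $q$ where $T_q^*M$ is transverse to $L$ \emph{and} $h$ is injective on $L\cap T_q^*M$: Proposition \ref{prop:Cerf} shows by a Sard-type argument that this set is open, dense and of full measure, and Proposition \ref{prop:tightness} shows by a continuity/separation-of-actions argument that the selected sheet is locally constant there, so $f$ is actually smooth on $\mathcal{U}_L^{Cf}$ and your envelope computation of $df$ then goes through. With that substitution your outline matches the paper's proof.
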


In the last couple of decades since
the appearance of Eliashberg--Gromov's symplectic $C^0$-rigidity theorem \cite{eliash},
symplectic topology and Hamiltonian dynamics have witnessed many interesting
$C^0$ phenomena. Some attempts to organize these phenomena in some conceptual
way have been explored. For example, the second named author and M\"uller
introduced the notion of $C^0$-Hamiltonian diffeomorphisms, called \emph{Hamiltonian homeomorphism} (abbreviated as \emph{hameomorphism})
in \cite{oh:hameo1}, which has been further studied in later literature
such as \cite{mueller}, \cite{HLS}.

In this paper we give a modest step in the direction of further understanding this $C^0$ nature
of the symplectic world: we define \emph{Lipschitz-exact Lagrangians}. This definition is
partially motivated by the constructions in Aubry--Mather theory \cite{bernard-santos}, where the
study of such a class of Lagrangians is needed naturally.
Imitating the construction of Hamiltonian homeomorphisms in \cite{oh:hameo1}, we
introduce them in Definition \ref{defn:Lip-exact}, by taking a completion of smooth exact Lagrangians in a suitably chosen topology. Roughly, a Lipschitz-exact Lagrangian consists of a smooth manifold $N$, a Lipschitz function $S$ on $N$ and a Lipschitz embedding
$$\iota: N\longrightarrow T^*M,$$
that can be approximated by a sequence of smooth exact Lagrangians $\iota_k:N\to T^*M$.

The main result we prove about Lipschitz-exact Lagrangians is Theorem \ref{gen gs}, which states that any compact Lipschitz-exact Lagrangian admits a \emph{generalized graph selector} (Definition \ref{def:gen_slector}). This is a weakening of the notion of graph selector, which was introduced in \cite{bernard-santos} and plays a crucial role in our results on Hamiltonian dynamics.

We will give two applications of our results guaranteeing the existence of graph selectors, following the ideas of \cite{bernard-santos1} and \cite{bernard-santos}. The first is in Aubry--Mather theory. Here we are interested in studying the dynamics of
$C^2$ \emph{Tonelli} Hamiltonians $H:T^*M\to \mathbb{R}$, which means convex with positive definite Hessian and superlinear in each fiber. These Hamiltonian are sometimes called \emph{optical}.

We first introduce some notation we need to state the result. Denote by $\mathcal{G}$ the set
$$\mathcal{G}:=\{\Gamma_v\mid v\in{C^{1,1}(M)}\},$$ where
$
\Gamma_v := \{(x,dv(x)) \in T^*M \mid x \in M\}$ and $C^{1,1}(M)$ are $C^1$ functions with Lipschitz differential.
Let $Ham(T^*M)$ be the set of Hamiltonian diffeomorphisms, $Symp^{e}(T^*M)$ be the set of exact symplectic diffeomorphisms
and  define the sets
$$\mathcal{H}:=\{\varphi(\Gamma_v)\mid \varphi\in Ham(T^*M), \Gamma_v\in\mathcal{G}\}$$ and
$$\mathcal{E}:=\{\varphi(\Gamma_v)\mid \varphi\in Symp^{e}(T^*M), \Gamma_v\in\mathcal{G}\}.$$
We define one more set,
$$\mathcal{L}:=\{\textrm{compact Lipschitz-exact Lagrangians in } T^*M\},$$
and remark that $\mathcal{H}\subset\mathcal{E}\subset\mathcal{L}$. The second inclusion is proved in Lemma \ref{graph potential}.
(We remark that in \cite{bernard-santos} the current $\CH$ is denoted by $\CL$.
Here we reserve $\CL$ for the set of \emph{Lipschitz-exact Lagrangians} instead, which are
defined in the present paper.)

Fix a $C^2$ Tonelli Hamiltonian $H:T^*M\to \mathbb{R}$ and an energy value $a\in\mathbb{R}$, for each manifold $L$ in $\mathcal{G}$, $\mathcal{H}$, $\mathcal{E}$ or $\mathcal{L}$ (or any other set of compact submanifolds of $T^*M$) we define the maximal invariant subset of $L\cap \{H=a\}$ by
$$\mathcal{I}_a^*(L):=\bigcap_{t\in\mathbb{R}}\phi_H^t\left(L\cap \{H=a\}\right),$$ where $\phi_H^t$ is the Hamiltonian flow of $H$. We define the \emph{critical value} of $H$
\begin{align}
\alpha_\mathcal{E}(H):=\inf_{L\in\mathcal{E}}\max_{(q,p)\in L}H(q,p).\label{defalpha}
\end{align}
We now define the Aubry set
$\mathcal{A}_\mathcal{E}^*(H)$ and the Ma\~n\'e set $\mathcal{N}_\mathcal{E}^*(H)$ as
\begin{align}\label{defAN}
\mathcal{A}_
\mathcal{E}^*(H)& :=  \bigcap_{L\in\mathcal{E}, L\subset\{H\leq\alpha_\mathcal{E}(H)\}}\mathcal{I}^*_{\alpha_\mathcal{E}(H)}(L),\nonumber\\
\mathcal{N}_\mathcal{E}^*(H)& :=  \bigcup_{L\in\mathcal{E}, L\subset\{H\leq\alpha_\mathcal{E}(H)\}}\mathcal{I}^*_{\alpha_\mathcal{E}(H)}(L).
\end{align}
We can give analogous definitions, $\alpha_\mathcal{S}(H)$, $\mathcal{A}_\mathcal{S}^*(H)$ and $\mathcal{N}_\mathcal{S}^*(H)$, by taking $L\in\mathcal{S}$, where $\mathcal{S}$ can be $\mathcal{G}$, $\mathcal{H}$ or $\mathcal{L}$. The classical definitions of these objects use $\mathcal{G}$. We would like to remark that it follows from \cite{bernard2} that $\alpha_\mathcal{G}(H)$ is in fact a minimum.

In \cite{bernard-santos1} and \cite{bernard-santos} it is shown that if we use $\mathcal{H}$ we obtain the same objects and it is asked whether the same kind of result hold for the objects in $\mathcal{E}$.

Here, using the graph selector constructed in Theorem \ref{gs}, we show that if we use $\mathcal{E}$ or even $\mathcal{L}$ we again obtain the same objects.
\begin{thm}\label{aplic1}
	Let $H:T^*M\to \mathbb{R}$ be a Tonelli Hamiltonian. Then
	$$\alpha_\mathcal{G}(H)=\alpha_\mathcal{E}(H)=\alpha_\mathcal{L}(H),$$
	$$\mathcal{A}_\mathcal{G}^*(H)=\mathcal{A}_\mathcal{E}^*(H)=\mathcal{A}_\mathcal{L}^*(H),$$
	$$\mathcal{N}_\mathcal{G}^*(H)=\mathcal{N}_\mathcal{E}^*(H)=\mathcal{N}_\mathcal{L}^*(H).$$
\end{thm}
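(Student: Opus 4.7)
The inclusions $\mathcal{G}\subset\mathcal{E}\subset\mathcal{L}$ immediately give $\alpha_\mathcal{L}\le\alpha_\mathcal{E}\le\alpha_\mathcal{G}$, and once these three values are shown to be equal (to a common $\alpha$), inclusion-reversal of intersections (respectively unions) in \eqref{defAN} yields $\mathcal{A}^*_\mathcal{L}\subset\mathcal{A}^*_\mathcal{E}\subset\mathcal{A}^*_\mathcal{G}$ and $\mathcal{N}^*_\mathcal{G}\subset\mathcal{N}^*_\mathcal{E}\subset\mathcal{N}^*_\mathcal{L}$. Thus the plan is to prove the three reverse statements
\[
\alpha_\mathcal{G}(H)\le\alpha_\mathcal{L}(H),\qquad
\mathcal{A}^*_\mathcal{G}(H)\subset\mathcal{A}^*_\mathcal{L}(H),\qquad
\mathcal{N}^*_\mathcal{L}(H)\subset\mathcal{N}^*_\mathcal{G}(H).
\]
All three will rest on the same device: for any $L\in\mathcal{L}$ with $L\subset\{H\le a\}$, the generalized graph selector $f_L:M\to\mathbb{R}$ of Theorem \ref{gen gs} satisfies $(q,df_L(q))\in L$ on a full-measure open set $\mathcal{U}\subset M$, hence $H(q,df_L(q))\le a$ almost everywhere; by fiberwise convexity of the Tonelli $H$, $f_L$ is a Lipschitz viscosity subsolution of $H(q,du)=a$.

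For the $\alpha$-inequality I would apply Bernard's $C^{1,1}$-regularization of viscosity subsolutions (refining Fathi--Siconolfi) to obtain, for every $\varepsilon>0$, a function $v_\varepsilon\in C^{1,1}(M)$ with $H(q,dv_\varepsilon(q))\le a+\varepsilon$ everywhere; hence $\alpha_\mathcal{G}(H)\le a+\varepsilon$ for every such $L$, and in the limit $\alpha_\mathcal{G}\le\alpha_\mathcal{L}$. For the Aubry inclusion, fix $L\in\mathcal{L}$ with $L\subset\{H\le\alpha\}$; then $f_L$ is a critical Lipschitz subsolution. By Fathi's theorem extended to Lipschitz critical subsolutions, the classical Aubry set $\mathcal{A}^*_\mathcal{G}(H)$ lies on the graph of $df_L$ wherever the latter is defined, hence in $L$. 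Flow-invariance of $\mathcal{A}^*_\mathcal{G}(H)\subset L\cap\{H=\alpha\}$ under $\phi_H^t$ then forces $\mathcal{A}^*_\mathcal{G}(H)\subset\bigcap_t\phi_H^t(L\cap\{H=\alpha\})=\mathcal{I}^*_\alpha(L)$, and intersecting over $L\in\mathcal{L}$ with $L\subset\{H\le\alpha\}$ yields $\mathcal{A}^*_\mathcal{G}\subset\mathcal{A}^*_\mathcal{L}$.

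The hardest step will be the Mañé inclusion $\mathcal{N}^*_\mathcal{L}\subset\mathcal{N}^*_\mathcal{G}$. Given $L\in\mathcal{L}$ with $L\subset\{H\le\alpha\}$ and $(q_0,p_0)\in\mathcal{I}^*_\alpha(L)$, its $\phi_H^t$-orbit $(q_t,p_t)=\phi_H^t(q_0,p_0)$ lies in $L\cap\{H=\alpha\}$ for all $t\in\mathbb{R}$, and the task is to exhibit a \emph{single} $v\in C^{1,1}(M)$ with $\Gamma_v\subset\{H\le\alpha\}$ and $dv(q_t)=p_t$ for every $t$, so that $(q_0,p_0)\in\mathcal{I}^*_\alpha(\Gamma_v)\subset\mathcal{N}^*_\mathcal{G}(H)$. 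Following \cite{bernard-santos}, the plan is to apply the backward and forward Lax--Oleinik semigroups to $f_L$ along the projected curve $t\mapsto q_t$, producing a Lipschitz weak KAM subsolution that calibrates the orbit, and then to regularize in $C^{1,1}$ in a way that preserves the pointwise calibration identity $dv(q_t)=p_t$ along the entire orbit. Verifying that the Lipschitz-exact structure of $L$ is rich enough for this calibration to survive Bernard's $C^{1,1}$-smoothing on a prescribed infinite orbit (and not merely almost everywhere) is the technical heart of the argument, and will be the main obstacle.
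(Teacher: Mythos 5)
Your formal reductions are correct: the inclusions $\mathcal{G}\subset\mathcal{E}\subset\mathcal{L}$ give the three ``easy'' directions, and your $\alpha$-argument (generalized graph selector $\Rightarrow$ Lipschitz sub-solution by fiberwise convexity $\Rightarrow$ Bernard's $C^{1,1}$ regularization) is essentially the paper's. But two points need attention. First, a persistent imprecision: Theorem \ref{gen gs} only gives $(q,df_L(q))\in \widehat{L}_q$, the fiberwise convex hull, not $(q,df_L(q))\in L$. The sub-solution property survives because $\widehat{L}_q\subset\{H\le a\}\cap T^*_qM$ by convexity of $H$, but your later step ``the Aubry set lies on the graph of $df_L$, hence in $L$'' silently needs the extremality argument: on the critical energy level, strict fiberwise convexity of $H$ forces any point of $\widehat{L}_q\cap\{H=\alpha\}$ to be extremal in $\widehat{L}_q$, hence to lie in $L$, and only then does part (b) of Definition \ref{def:gen_slector} supply the value identity $f_L=h$ needed for calibration. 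You never invoke part (b) at all, yet it is indispensable.

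Second, and more seriously, your Ma\~n\'e inclusion is a genuine gap, which you acknowledge: you propose an orbit-by-orbit calibration construction (Lax--Oleinik applied along each individual orbit, followed by a smoothing that preserves calibration on that infinite orbit) and correctly identify that you cannot currently verify it. The paper avoids this entirely by proving a single stronger lemma (Theorem \ref{main aubry mather}): for \emph{each} $L\in\mathcal{L}$ with $L\subset\{H\le a\}$ there is one $\Gamma\in\mathcal{G}$ in the same sub-level with $\mathcal{I}^*_a(\Gamma)=\mathcal{I}^*_a(L)$ as an \emph{equality of sets}. All three statements of the theorem then follow by pure set manipulation (your trivial inclusions plus ``every $\mathcal{I}^*_\alpha(L)$ is realized by some $\Gamma$''). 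The hard content you flag --- that the $C^{1,1}$ smoothing preserves the whole maximal invariant set, not just the sub-solution property --- is precisely what Bernard's regularization theorem (Theorem \ref{lem:bernard}, i.e.\ Theorem 3 of \cite{bernard-santos}) already delivers: $\mathcal{I}^*_a(\Gamma_f)=\mathcal{I}^*_a(\Gamma_{\widetilde f})$. What remains is the weak KAM identification $\mathcal{I}^*_a(\Gamma_{\widetilde f})=\mathcal{I}^*_a(L)$ carried out in Section 3 of \cite{bernard-santos}, which uses exactly the extremality and part-(b) ingredients noted above. So your plan can be completed, but not along the route you sketch; you should replace the orbit-by-orbit construction by the global statement of Theorem \ref{main aubry mather} and cite the invariant-set-preserving form of Bernard's regularization.
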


The reason for taking this approach is that definitions (\ref{defalpha}) and (\ref{defAN}) are symplectically natural, meaning that the following result obtained by Bernard \cite{bernard1} using variational methods becomes trivial:

\begin{cor}
	Let $H:T^*M\to \mathbb{R}$ be a Hamiltonian (not necessarily Tonelli). Then definitions (\ref{defalpha}) and (\ref{defAN}) are equivariant under the action of $Symp^{e}(T^*M)$. In other words given $\varphi\in Symp^{e}(T^*M)$ we have
	$$\alpha_\mathcal{E}(H\circ\varphi)=\alpha_\mathcal{E}(H),\ \varphi\left(\mathcal{A}_\mathcal{E}^*(H\circ \varphi)\right)=\mathcal{A}_\mathcal{E}^*(H),\ \varphi\left(\mathcal{N}_\mathcal{E}^*(H\circ \varphi)\right)=\mathcal{N}_\mathcal{E}^*(H).$$
\end{cor}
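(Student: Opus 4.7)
The plan is to exploit the fact that the class $\mathcal{E}$ is, by its very definition, invariant under the push-forward action of $Symp^e(T^*M)$, and then use a change of variables argument. Because $Symp^e(T^*M)$ is a group, for any $\varphi\in Symp^e(T^*M)$ the map $L\mapsto \varphi(L)$ is a bijection of $\mathcal{E}$ onto itself: if $L=\psi(\Gamma_v)$ with $\psi\in Symp^e(T^*M)$ then $\varphi(L)=(\varphi\circ\psi)(\Gamma_v)\in\mathcal{E}$, and the same argument applied to $\varphi^{-1}$ gives the bijectivity. Once this is recorded, the equivariance of each of the three quantities is a bookkeeping exercise.

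For $\alpha_\mathcal{E}$, I would simply substitute $L'=\varphi(L)$ in the defining infimum:
\begin{equation*}
\alpha_\mathcal{E}(H\circ\varphi)=\inf_{L\in\mathcal{E}}\max_{x\in L}(H\circ\varphi)(x)=\inf_{L\in\mathcal{E}}\max_{y\in\varphi(L)}H(y)=\inf_{L'\in\mathcal{E}}\max_{y\in L'}H(y)=\alpha_\mathcal{E}(H),
\end{equation*}
using the bijection in the last step.

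For $\mathcal{A}_\mathcal{E}^*$ and $\mathcal{N}_\mathcal{E}^*$, the key additional ingredient is the standard conjugation identity for Hamiltonian flows under symplectomorphisms: since $\varphi$ is symplectic, $\phi_{H\circ\varphi}^t=\varphi^{-1}\circ\phi_H^t\circ\varphi$, and of course $\{H\circ\varphi=a\}=\varphi^{-1}(\{H=a\})$. From these two identities one immediately obtains
\begin{equation*}
\mathcal{I}_a^*(L,H\circ\varphi)=\bigcap_{t\in\R}\varphi^{-1}\!\left(\phi_H^t\!\left(\varphi(L)\cap\{H=a\}\right)\right)=\varphi^{-1}\!\left(\mathcal{I}_a^*(\varphi(L),H)\right),
\end{equation*}
and also $L\subset\{H\circ\varphi\leq b\}\iff\varphi(L)\subset\{H\leq b\}$. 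Combining this with the already-proved equality $\alpha_\mathcal{E}(H\circ\varphi)=\alpha_\mathcal{E}(H)$ and reindexing the intersection and union in \eqref{defAN} via $L'=\varphi(L)$ yields $\varphi(\mathcal{A}_\mathcal{E}^*(H\circ\varphi))=\mathcal{A}_\mathcal{E}^*(H)$ and $\varphi(\mathcal{N}_\mathcal{E}^*(H\circ\varphi))=\mathcal{N}_\mathcal{E}^*(H)$.

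There is no serious obstacle: the corollary is essentially tautological once one notes that the set $\mathcal{E}$ was designed precisely to be $Symp^e$-invariant. This is the author's point in contrasting with Bernard's original variational proof over the smaller class $\mathcal{G}$, where the analogous equivariance is a nontrivial theorem because $\mathcal{G}$ itself is not preserved by arbitrary exact symplectomorphisms.
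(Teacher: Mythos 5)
Your proof is correct and takes exactly the route the paper intends: the paper's own proof is the one-line observation that the statement ``easily follows from the definitions and the fact that if $L\in\mathcal{E}$ and $\varphi\in Symp^{e}(T^*M)$, then $\varphi(L)\in\mathcal{E}$,'' and you have simply spelled out that change-of-variables argument, including the conjugation identity $\phi_{H\circ\varphi}^t=\varphi^{-1}\circ\phi_H^t\circ\varphi$ that makes the reindexing of $\mathcal{I}_a^*$ work. No gaps.
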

\begin{proof}
	This easily follows from the definitions and the fact that if $L\in \mathcal{E}$ and $\varphi\in Symp^{e}(T^*M)$, then $\varphi(L)\in \mathcal{E}$.
\end{proof}

We observe that this result is only useful for the classical (using $\mathcal{G}$) Aubry and Ma\~n\'e sets when both $H$ and $H\circ \varphi$ are Tonelli, that is, when Theorem 1.3 applies.

The next theorem is our second application.

\begin{thm}\label{aplic2}
	Suppose M is connected and let $(N,\iota,S)$ be a compact Lipschitz-exact Lagrangian in $T^*M$ in the sense of Definition \ref{defn:Lip-exact}.
	If $L = \iota(N)$ is invariant under the flow of a Tonelli Hamiltonian then $L \in\mathcal{G}$, that is, $L$ is a Lipschitz graph.
\end{thm}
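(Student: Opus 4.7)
The plan is to combine the generalized graph selector from Theorem \ref{gen gs} with the standard weak KAM / action-minimization machinery for Tonelli Hamiltonians, following the strategy of Bernard--Oliveira dos Santos \cite{bernard-santos}. First, apply Theorem \ref{gen gs} to $L$ to produce a generalized graph selector $f: M \to \R$: a Lipschitz function, differentiable on a dense open set $\mathcal{U} \subset M$ of full measure, with $(q, df(q)) \in L$ and $f(q) = h_L(q, df(q))$ for $q \in \mathcal{U}$. Next, verify that $H$ is constant on $L$: at any differentiability point of $\iota$ the tangent space to $L$ is Lagrangian, and by $\phi_H^t$-invariance the Hamiltonian vector field $X_H$ lies in this tangent space, so $dH|_{TL}=\omega(X_H,\cdot)|_{TL}=0$ there; by Rademacher's theorem, density of regular points, and connectedness of $M$, this forces $H\equiv c$ on $L$ for some constant $c$, and consequently $H(q,df(q))=c$ for a.e.\ $q\in M$.

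The main step is to show that every $(q_0,p_0)\in L$ lies in the closed Lipschitz graph $\Gamma_f := \overline{\{(q, df(q)) : q \in \mathcal{U}\}}$. By invariance, the backward orbit $(\gamma(t),\xi(t)):=\phi_H^{-t}(q_0,p_0)$ lies in $L$ for all $t\ge 0$; since $H=c$ on $L$, the projection $\gamma$ is locally action-minimizing for the Tonelli Lagrangian $\mathcal{L}$ dual to $H$. Combining exactness $\iota^{*}\theta = dS$ with the defining identity $f\circ\pi|_L = h_L$ (which holds over $\mathcal{U}$ and extends by Lipschitz continuity of both sides), one derives the calibration identity
\[
f(q_0)-f(\gamma(t)) \;=\; \int_0^t \bigl(\mathcal{L}(\gamma(-s),\dot\gamma(-s)) + c\bigr)\, ds
\]
for all $t\ge 0$, so $\gamma$ calibrates $f$ at energy level $c$. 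Strict fiberwise convexity of $H$ (Tonelli) then pins down the initial momentum: among absolutely continuous curves arriving at $q_0$, the calibrating one is essentially unique, and its Legendre-dual momentum is the unique element of the Fréchet superdifferential of $f$ at $q_0$. Hence $f$ is differentiable at $q_0$ with $df(q_0)=p_0$, so $\pi|_L$ is injective. Injectivity, together with surjectivity from the graph-selector property, compactness of $L$, and matching dimensions, makes $\pi|_L$ a homeomorphism; thus $L$ is the Lipschitz graph of $df$, giving $L\in\mathcal{G}$.

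The main obstacle is making the calibration identity rigorous in the Lipschitz-exact setting: the relation $f\circ\pi|_L = h_L$ and the Tonelli calibration machinery must be run with $f$ only Lipschitz and with $L$ merely a Lipschitz submanifold. A natural route is to exploit the smooth approximations $\iota_k: N\to T^{*}M$ granted by Definition \ref{defn:Lip-exact}, establish analogous identities for these smooth Lagrangians (where standard Tonelli arguments apply verbatim), and pass to the limit using uniform Lipschitz bounds on $h_{L_k}$, uniform $C^0$ convergence of $\iota_k$ to $\iota$, and the coercivity of $H$ to control the approximating backward orbits.
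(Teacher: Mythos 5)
Your overall strategy (generalized graph selector plus weak KAM calibration) is in the right spirit, but there are two genuine gaps. First, you misstate what Theorem \ref{gen gs} delivers for a Lipschitz-exact Lagrangian: it produces a \emph{generalized} graph selector, so at a differentiability point $q$ one only knows $(q,df(q))\in\widehat{L}_q$, the \emph{fiberwise convex hull} of $L\cap T^*_qM$, and the identity $f(q)=h(q,df(q))$ holds only when $df(q)$ is extremal in $\widehat{L}_q$. The properties you assert --- $(q,df(q))\in L$ on a full-measure set and the identity $f\circ\pi|_L=h_L$ --- are exactly what is \emph{not} available here; their failure is the reason the weaker notion was introduced. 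The correct way to feed $f$ into the Tonelli machinery is the one used in Theorem \ref{main aubry mather}: since $L\subset\{H=e\}$ (your constancy argument is fine and matches Lemma \ref{inv}) and $H$ is fiberwise convex, $\widehat{L}\subset\{H\le e\}$, so the generalized selector is a Lipschitz \emph{sub-solution} of $H(q,df(q))=e$; everything downstream has to be run from the sub-solution property, not from a pointwise selection into $L$.

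Second, even granting your calibration argument, the conclusion ``$\pi|_L$ is a homeomorphism, thus $L$ is the Lipschitz graph of $df$, giving $L\in\mathcal{G}$'' does not reach $\mathcal{G}$, which by definition consists of graphs $\Gamma_v$ with $v\in C^{1,1}(M)$: differentiability of $f$ at every point with $L=\Gamma_{df}$ does not imply that $df$ is Lipschitz (the inverse of the Lipschitz bijection $\pi\circ\iota$ need not be Lipschitz a priori), and this regularity is precisely the hard content of Arnaud's/Birkhoff's theorem. The paper avoids both issues by a much shorter route: after Lemma \ref{inv} gives $L\subset\{H=e\}$, invariance gives $\mathcal{I}^*_e(L)=L$, and Theorem \ref{main aubry mather} --- whose proof uses the generalized selector as a sub-solution and then Bernard's regularization (Theorem \ref{lem:bernard}) to replace it by a $C^{1,1}$ sub-solution $\tilde f$ --- supplies $\Gamma=\Gamma_{\tilde f}\in\mathcal{G}$ with $\mathcal{I}^*_e(\Gamma)=\mathcal{I}^*_e(L)=L$, whence $L\subset\Gamma$ and $L=\Gamma$ by compactness of $L$ and connectedness of $M$. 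A self-contained calibration proof along your lines would essentially re-derive the content of \cite{bernard-santos}, and would still need an additional ingredient (Mather-type Lipschitz graph estimates or Bernard's regularization) to obtain the required regularity of the graph.
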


A version of this theorem for smooth Lagrangians Hamiltonian isotopic to the zero section was proved by Arnaud \cite{arnaud}. It was then generalized in \cite{bernard-santos} to Lagrangians in $\mathcal{H}$.
%
%\begin{thm}\label{aplic2}
%Suppose M is connected and let $L$ be a Lipschitz-exact Lagrangian in $\mathcal{E}$.
%If $L$ is invariant under the flow of a Tonelli Hamiltonian then $L \in\mathcal{G}$, that is, $L$ is a Lipschitz graph.
%\end{thm}
%
%A version of this theorem for smooth Lagrangians Hamiltonian isotopic to the zero section was proved by Arnaud \cite{arnaud}. It was then generalized in \cite{bernard-santos} to Lagrangians in $\mathcal{H}$. We expect this theorem to hold for all compact Lipschitz-exact Lagrangians in the sense of Definition \ref{defn:Lip-exact}. For this we would need to extend Lemma \ref{inv} to this larger class. See Remark \ref{c0lag} for further discussion on this problem.

\section{Lipschitz-exact Lagrangian submanifolds}
\label{sec:exact}

We start by recalling the notion of smooth exact Lagrangian submanifolds. We restrict ourselves to cotangent bundles, since that is the case we are interested in, but most of these notions make sense in general exact symplectic manifolds.
Let $M$ be a closed smooth manifold of dimension $n$. We consider its cotangent bundle $X=T^*M$ with the usual symplectic form $\omega=-d \theta$, where $\theta$ is the canonical Liouville $1$-form.

\begin{defn}
	Let $N$ be a smooth $n$-manifold and $\iota: N \to T^*M$ be a smooth embedding. We say $(N,\iota)$  is an \emph{exact Lagrangian embedding} if $\iota^*\theta$ is exact, and call its image $L = \iota(N)$ an exact Lagrangian submanifold of $T^*M$.
	We call the triple $(N,\iota,S)$ a \emph{Lagrangian brane} where $S$ is
	a smooth function $S:N \to \R$ such that $ d S=\iota^*\theta$. We call $S$
	a \emph{Liouville primitive} or just a \emph{primitive} of $(N,\iota)$.
\end{defn}

\begin{defn} A diffeomorphism  $\varphi: T^*M \to T^*M$ is called an \emph{exact symplectomorphism} if $\varphi^*\theta -\theta$ is an exact $1$-form. We denote by $Symp^{e}(T^*M)$ the set of all exact symplectomorphisms.
\end{defn}

In this paper we will be particularly interested in exact Lagrangians of the following form. Let $v:M\to \R$ be a smooth function and let
$$\Gamma_v:M\longrightarrow T^*M,\ q \mapsto (q, d v(q))$$
be the graph of its derivative. We have the following lemma

\begin{lem}\label{exact_primitive}
	Let $v$ be a smooth function and $\varphi\in Symp^{e}(T^*M)$. Then the embedding
	$$\iota=\varphi\circ \Gamma_v:M\longrightarrow T^*M$$
	is an exact Lagrangian.
\end{lem}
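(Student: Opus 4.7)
The plan is to unravel the pullback $\iota^*\theta$ step by step, using the definitions of exact symplectomorphism and of the graph map $\Gamma_v$.

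First I would note that $\iota$ is automatically a smooth embedding: $\Gamma_v$ is a smooth embedding of $M$ onto the Lagrangian graph $\{(q, dv(q))\}$, and $\varphi$ is a diffeomorphism, so the composition $\iota = \varphi \circ \Gamma_v$ is a smooth embedding, and its image $\varphi(\Gamma_v)$ is Lagrangian because $\Gamma_v$ is Lagrangian and $\varphi^*\omega = -d\varphi^*\theta = -d(\theta + dg) = \omega$.

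Next I would compute $\iota^*\theta$ directly. Since $\varphi\in Symp^e(T^*M)$, by definition there is a smooth function $g : T^*M \to \R$ with
\[
\varphi^*\theta - \theta = dg.
\]
Therefore
\[
\iota^*\theta = \Gamma_v^*\varphi^*\theta = \Gamma_v^*\theta + \Gamma_v^*(dg) = \Gamma_v^*\theta + d(g\circ \Gamma_v).
\]
Now for the remaining term: in local cotangent coordinates $(q,p)$ one has $\theta = p\, dq$, and $\Gamma_v(q) = (q, dv(q))$, so
\[
\Gamma_v^*\theta = dv(q)\, dq = dv.
\]
Hence
\[
\iota^*\theta = d\bigl( v + g\circ \Gamma_v\bigr),
\]
which is exact, with explicit Liouville primitive $S = v + g\circ\Gamma_v$.

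There is no real obstacle here; the content of the lemma is essentially bookkeeping of primitives. The one point worth stating carefully is the invariant identity $\Gamma_v^*\theta = dv$, which is the tautological property of the Liouville form that justifies calling graphs of exact one-forms exact Lagrangians in the first place.
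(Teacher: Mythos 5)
Your proof is correct and is essentially identical to the paper's: both use $\varphi^*\theta-\theta=dg$ to write $\iota^*\theta=\Gamma_v^*\theta+d(g\circ\Gamma_v)$ and then the tautological identity $\Gamma_v^*\theta=dv$ to obtain the primitive $S=v+\Gamma_v^*g$. The extra remark that $\iota$ is an embedding with Lagrangian image is a harmless addition.
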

\begin{proof}
	Since $\varphi$ is exact, there is $g:T^*M\to\R$ such that $\varphi^*\theta -\theta=d g$. We compute
	\begin{align*}
	\iota^*\theta=& \Gamma_v^*(\varphi^*\theta)= \Gamma_v^*(\theta+d g)= \Gamma_v^*(\theta)+d(\Gamma_v^*(g)).
	\end{align*}
	By definition of $\theta$ we can easily see that $\Gamma_v^*(\theta)=d v$. Hence $\iota^*\theta=d(v+\Gamma_v^*g)$.
\end{proof}

We now introduce a generalization of the notion of exact Lagrangian that we call \emph{Lipschitz-exact Lagrangians}.

\begin{defn}\label{defn:Lip-exact}
	Let $N$ be a smooth $n$-manifold, $\iota: N \to T^*M$ be an injective continuous map
	and let $S:N \to\R$ be a continuous function. We say the triple $(N,\iota, S)$ is
	a \emph{Lipschitz-exact Lagrangian brane} if there are equi-Lipschitz sequences $S_k: N \to \R$ of smooth functions and $\iota_k: N \to T^*M$ of smooth embeddings such that
	\begin{enumerate}
		\item[\textbf{(a)}] $\iota_k^*\theta =d S_k$,
		\item[\textbf{(b)}] $S_k\to S$ and $\iota_k\to \iota$ uniformly (in the $C^0$ topology).
	\end{enumerate}
	We call any such sequence $(N, \iota_k, S_k)$ an approximating sequence of the Lipschitz-exact Lagrangian brane $(N,\iota, S)$.
\end{defn}

Note that as consequence of the definition, if $(N,\iota, S)$ is a Lipschitz-exact Lagrangian brane, then $\iota$ and $S$ are Lipschitz.
With this definition we can also see that $S$ is a primitive of the pull-back of $\theta$ in the sense of the next proposition.

\begin{prop}\label{prop:iota*omega=0} Let $(N,\iota,S)$ be a Lipschitz-exact Lagrangian brane. Then
	\be\label{eq:iotatheta=dS}
	\iota^*\theta = dS
	\ee
	as a one-form in $L^\infty$. In particular $\iota^*\omega = 0$ as a two-form in $L^\infty$.
\end{prop}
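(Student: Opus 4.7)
The strategy is to first pass to a weak-$*$ limit in $L^\infty$ on both sides of the identity $\iota_k^*\theta = dS_k$ to establish \eqref{eq:iotatheta=dS}, and then to deduce $\iota^*\omega = 0$ by a distributional Leibniz rule for the Lipschitz components. The equi-Lipschitz bound on $\iota_k$ and $S_k$ combined with uniform convergence forces $\iota$ and $S$ to be Lipschitz; by Rademacher's theorem they are differentiable almost everywhere with $d\iota, dS \in L^\infty$. In particular, the pointwise pullbacks
\[
\iota^*\theta(x) = \theta_{\iota(x)}\circ d\iota(x), \qquad \iota^*\omega(x) = \omega_{\iota(x)}(d\iota(x)\,\cdot\,,d\iota(x)\,\cdot\,)
\]
are well-defined as $L^\infty$ forms on $N$, and the claim is that they agree a.e.\ with $dS$ and $0$ respectively.

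For the first equality, Banach--Alaoglu together with the equi-Lipschitz bound implies that any subsequence of $\{dS_k\}$ has a further weak-$*$ convergent subsequence in $L^\infty$; any such limit must agree with the distributional derivative of the $C^0$-limit $S$, forcing the whole sequence to converge weakly-$*$ to $dS$. The same reasoning gives $d\iota_k \to d\iota$ weakly-$*$ in $L^\infty$. In a Darboux chart $(q,p)$ on $T^*M$ (covering the compact image $\iota(N)$ by finitely many such charts and using a partition of unity) we have $\theta = p_i\, dq^i$, so $\iota_k^*\theta = \iota_k^{p,i}\, d\iota_k^{q,i}$. Because $\iota_k^{p,i}\to\iota^{p,i}$ uniformly while $d\iota_k^{q,i}\to d\iota^{q,i}$ weakly-$*$ in $L^\infty$, and because the product of a uniformly convergent bounded sequence with a weakly-$*$ convergent one converges weakly-$*$ to the product of the limits (a routine test against $L^1$-functions), we deduce that $\iota_k^*\theta \to \iota^*\theta$ weakly-$*$ in $L^\infty$. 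Comparing with $\iota_k^*\theta = dS_k \to dS$ yields \eqref{eq:iotatheta=dS} as an a.e.\ identity.

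For the second assertion, $dS$ is distributionally closed, hence $d(\iota^*\theta) = 0$ in the sense of distributions. In the Darboux chart, the distributional Leibniz identity
\[
d(\iota^{p,i}\, d\iota^{q,i}) = d\iota^{p,i}\wedge d\iota^{q,i} + \iota^{p,i}\, d(d\iota^{q,i}) = d\iota^{p,i}\wedge d\iota^{q,i}
\]
holds because the second summand vanishes distributionally for every locally integrable $\iota^{q,i}$, or equivalently because the symmetric distributional second partials of a Lipschitz function cancel after antisymmetrization. Therefore $d\iota^{p,i}\wedge d\iota^{q,i} = 0$ as a distribution; since both sides lie in $L^\infty$, this is an a.e.\ identity. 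The pointwise multilinear-algebra computation $\iota^*\omega = -\iota^*(d\theta) = -d\iota^{p,i}\wedge d\iota^{q,i}$ then completes the proof. The main obstacle is the justification of the distributional Leibniz rule for a product of Lipschitz functions; once that is granted via the Schwarz identity for distributional second derivatives, the remainder is a standard weak-limit argument.
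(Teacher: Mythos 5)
Your argument is correct and, for the identity $\iota^*\theta = dS$, it is in substance the same as the paper's: the paper also tests against smooth $(n-1)$-forms, uses uniform convergence of $S_k$ to identify the weak derivative of $S$, and in a Darboux chart splits $p\circ\iota_k\, d(q\circ\iota_k) - p\circ\iota\, d(q\circ\iota)$ into a term controlled by uniform convergence times the equi-Lipschitz $L^\infty$ bound and a term controlled by weak-$\star$ convergence of the derivatives. Your observation that the weak-$\star$ limit of $d(q\circ\iota_k)$ is uniquely identified (so no subsequence extraction is needed) is a small tidying of the paper's argument, which passes to a subsequence at that point.

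The only genuine divergence is in the second assertion. The paper deduces $\iota^*\omega=0$ by citing Federer's identity $d\varphi^* = \varphi^* d$ for Lipschitz maps acting on currents, whereas you prove the needed fact in coordinates via the distributional Leibniz rule $d(u\,dv)=du\wedge dv$ for Lipschitz $u,v$. That identity is true, but your stated justification --- cancelling ``$u\,\partial_i\partial_j v$'' by Schwarz symmetry --- is not quite legitimate as written, since $\partial_i\partial_j v$ is only a distribution and its product with the merely Lipschitz (hence merely $L^\infty$) function $u$ is not defined, so the two-term Leibniz expansion you antisymmetrize does not a priori exist. The clean proof is by mollification: for $u_\e=u*\rho_\e$, $v_\e=v*\rho_\e$ one has $d(u_\e\,dv_\e)=du_\e\wedge dv_\e$ classically, $u_\e\,dv_\e\to u\,dv$ in $L^1_{\mathrm{loc}}$, and $du_\e\wedge dv_\e\to du\wedge dv$ boundedly a.e.\ (mollified derivatives of Lipschitz functions converge a.e.\ to the weak derivative and are uniformly bounded), so both sides pass to the limit distributionally. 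With that substitution your proof is complete and self-contained, and arguably more elementary than invoking Federer.
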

\begin{proof}
	We assume $N$ is orientable. (Otherwise we just use the odd differential forms
	in the sense of de Rham \cite{deRham} for the arguments below and so we will focus on the case of orientable $N$.)
	
	We fix a Riemannian metric on $N$ and denote by $\vol$ the associated volume form.
	We first note that both sides of the equation define well-defined currents since $\iota$ and $S$ are
	Lipschitz functions and so are in $W^{1,\infty}$ (see \cite[Section 4.2.3, Theorem 4]{evans}). By a standard theorem (see \cite[Section 6.2, Theorem 1]{evans}, for example),
	$dS$ equals its weak derivative almost everywhere. Therefore it is enough to prove that
	the weak derivative (or equivalently the derivative as a current) is the same as $\iota^*\theta$.
	
	Let $\eta$ be any smooth $(n-1)$-form.
	Then the weak derivative $dS$ satisfies
	\bea\label{weakdS}
	\int_N dS \wedge \eta & = & -\int_N S d \eta = -\lim_{k \to \infty} \int_N S_k  d\eta \\ \nonumber
	& = & \lim_{k \to \infty}\int_N dS_k \wedge \eta  = \lim_{k \to \infty} \int_N \i_k^*\theta \wedge \eta.
	\eea
	Here we use the uniform convergence of $S_k$ to $S$ for the second equality.
	Without loss of generality, we assume that $\supp \eta$ is contained in a Darboux neighbohood $U$
	equipped with canonical coordinates $(q,p)$, $q = (q_1,\ldots,q_n), \, p= (p_1,\ldots, p_n)$
	for which $\theta =p dq =\sum_{i=1}^n p_i  dq_i$.
	We now express $\iota_k^*\theta = p \circ \iota_k \cdot d(q \circ \iota_k)$ and rewrite
	$$
	\int_N \i_k^*\theta \wedge \eta = \int_U p\circ \iota_k \cdot d(q\circ \iota_k) \wedge \eta
	$$
	We will show
	$$
	\int_U p\circ \iota_k \cdot d(q\circ \iota_k) \wedge \eta
	\to \int_U p\circ \iota \cdot d(q\circ \iota) \wedge \eta.
	$$
	This is where the hypothesis that $\iota_k$ is equi-Lipschitz enters in a crucial way: it
	first implies that $q \circ \iota_k, p\circ \iota_k$ are also equi-Lipschitz.
	We rewrite
	\beastar
	&{}& \int_U p\circ \iota_k\cdot d(q \circ \iota_k) \wedge \eta - \int_U  p\circ \iota \cdot d(q \circ \iota) \wedge \eta\\
	& = & \int_U (p\circ \iota_k - p\circ \iota)\cdot d(q \circ \iota_k) \wedge \eta
	+ \int_U p\circ \iota \cdot d(q \circ \iota_k - q\circ \iota )\wedge \eta.
	\eeastar
	For the first integral, we have the bound
	$$
	\left|\int_U  (p\circ \iota_k - p\circ \iota)\cdot d(q \circ \iota_k)\wedge \eta\right|
	\leq \|p\circ \iota_k - p\circ \iota\|_{L^\infty} \int_U |d(q \circ \iota_k)|\cdot |\eta| \, \vol.
	$$
	Since $q \circ \iota_k$ are equi-Lipschitz, $d(q \circ \iota_k)$ have an uniform bound on $L^\infty$-norm. 
	Therefore there exists $C > 0$
	independent of $k$ such that
	$$
	\int_U |d(q \circ \iota_k)|\cdot |\eta|\, \vol \leq C
	$$
	for all $k$. Then by the uniform convergence of $p\circ \iota_k \to p\circ \iota$, we have derived
	$$
	\int_U  (p\circ \iota_k - p\circ \iota)\cdot d(q \circ \iota_k)\wedge \eta  \to 0.
	$$
	Next, we consider the integral
	$$
	\int_U p\circ \iota \cdot d(q \circ \iota_k - q \circ \iota )\wedge \eta.
	$$
	Again using the uniform bound on the $W^{1,\infty}$-norm of $q\circ \iota_k$, we can choose a subsequence,
	still denoted by $q\circ \iota_k$, weakly-$\star$ converging to $q\circ \iota$ in $W^{1,\infty}$. Therefore
	$$
	\int_U p\circ \iota \cdot d (q \circ \iota_k -  q\circ \iota )\wedge \eta
	=  (-1)^{n-1}  \int_U \left(\eta \wedge (p \circ \iota) d(q \circ \iota_k)
	- \eta \wedge (p \circ \iota)d(q \circ \iota)\right) \to 0.
	$$
	This proves the convergence
	\be\label{convergence}
	\int_U p\circ \iota_k \cdot d(q \circ \iota_k) \wedge \eta \to \int_U p \circ \iota \cdot d(q\circ \iota) \wedge \eta
	= \int_N \iota^*\theta \wedge \eta
	\ee
	after taking a subsequence.
	
	Combining \ref{weakdS} and \ref{convergence}, we have proved
	$$
	\int_N dS \wedge \eta = \int_N \iota^*\theta \wedge \eta
	$$
	for all smooth $(n-1)$-form $\eta$, that is, the weak derivative of $S$ is given by
	$dS = \iota^*\theta$.

	In particular $d(\iota^*\theta) = 0$ as a current. On the other hand
	by the standard identity $d \varphi^* = \varphi^*d$ acting on the set of currents for
	the Lipschitz map $\varphi$ (see e.g., \cite[Section 4.4.1]{federer}),
	we obtain $\iota^*d\theta = d(\iota^*\theta)$ and hence
	$\iota^*\omega = -\iota^*(d\theta) = 0$ as a current.
	%
	%
	%To prove the second statement, we first taking the differential of $\iota^*\theta= dS$
	%as a current and derive
	%$$
	%d(\iota^*\theta) = 0
	%$$
	%as a current. Since $\iota_k^*(d\theta) = d(\iota_k^*\theta) \to d(\iota^*\theta)$ as a current,
	%we obtain $\iota^*(d\theta) = 0$ as a current which is a two-form contained in $L^\infty$.
	This finishes the proof.
\end{proof}

The following proposition is proved in the same way as Proposition \ref{prop:iota*omega=0}.
It is also proved in Proposition 2 in \cite{bernard-santos} in a slightly different setting.

\begin{prop}\label{cor:lip pot}
	Let $(N,\iota, S)$ be an Lipschitz-exact Lagrangian brane and
	let $c:[0,1]\to N$ be a Lipschitz curve, then
	\be\label{eq:lip_pot2}
	\int_{\iota\circ c} \theta = S\left(c(1)\right)-S\left(c(0)\right).
	\ee
\end{prop}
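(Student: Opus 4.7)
The plan is to mimic the strategy of Proposition \ref{prop:iota*omega=0}, but now on the $1$-dimensional parameter interval. First I would establish the identity at the smooth level and then pass to the limit using the uniform and equi-Lipschitz convergence.

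For the smooth approximations $(N,\iota_k,S_k)$, both $\iota_k\circ c$ and $S_k\circ c$ are Lipschitz maps on $[0,1]$, so the fundamental theorem of calculus for absolutely continuous functions gives
\[
\int_{\iota_k\circ c}\theta \;=\; \int_0^1\theta_{\iota_k(c(t))}\bigl(\tfrac{d}{dt}(\iota_k\circ c)(t)\bigr)\,dt
\;=\; \int_0^1 dS_k(\dot c(t))\,dt \;=\; S_k(c(1))-S_k(c(0)),
\]
the middle equality using $\iota_k^*\theta = dS_k$ almost everywhere on $[0,1]$. The right-hand side converges to $S(c(1))-S(c(0))$ by uniform convergence of $S_k$ to $S$, so it suffices to prove
\[
\int_{\iota_k\circ c}\theta \longrightarrow \int_{\iota\circ c}\theta.
\]

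Since $\iota\circ c([0,1])$ is compact, I would cover it by finitely many Darboux charts on $T^*M$ and partition $[0,1]$ accordingly. For $k$ large enough, the equi-Lipschitz property together with the uniform convergence $\iota_k\to\iota$ lets us arrange that each piece of $\iota_k\circ c$ also lies in the chosen chart, reducing the problem to a local statement in canonical coordinates $(q,p)$ where $\theta=p\,dq$. In such a chart I must show
\[
\int_0^1 (p\circ\iota_k\circ c)(t)\cdot\tfrac{d}{dt}(q\circ\iota_k\circ c)(t)\,dt
\;\longrightarrow\;
\int_0^1 (p\circ\iota\circ c)(t)\cdot\tfrac{d}{dt}(q\circ\iota\circ c)(t)\,dt.
\]
Splitting the difference as in Proposition \ref{prop:iota*omega=0}, I would control
\[
\int_0^1 (p\circ\iota_k - p\circ\iota)\circ c \cdot \tfrac{d}{dt}(q\circ\iota_k\circ c)\,dt
\]
by the uniform convergence $p\circ\iota_k\to p\circ\iota$ and the uniform $L^\infty$-bound on $\tfrac{d}{dt}(q\circ\iota_k\circ c)$ coming from the equi-Lipschitz hypothesis. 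For the remaining term
\[
\int_0^1 (p\circ\iota\circ c)\cdot\tfrac{d}{dt}\bigl((q\circ\iota_k - q\circ\iota)\circ c\bigr)\,dt,
\]
I would use Banach--Alaoglu: the equi-Lipschitz sequence $q\circ\iota_k\circ c$ has derivatives bounded in $L^\infty([0,1])$, hence a subsequence converges weakly-$\star$ in $L^\infty$, and the limit is forced to be $\tfrac{d}{dt}(q\circ\iota\circ c)$ by uniqueness (since $q\circ\iota_k\circ c\to q\circ\iota\circ c$ uniformly, hence in $L^1$). Testing weak-$\star$ convergence against $p\circ\iota\circ c\in L^1([0,1])$ yields the desired limit, and since the limit is independent of the subsequence, the full sequence converges.

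The main technical obstacle is the one already met in Proposition \ref{prop:iota*omega=0}: one cannot naively distribute $d$ onto a product of merely Lipschitz factors, so the nonlinear interaction between $p\circ\iota_k$ and $d(q\circ\iota_k)$ must be decoupled into a uniform-convergence piece times an $L^\infty$-bounded piece, plus an $L^1$-pairing piece against a weak-$\star$-convergent piece. The equi-Lipschitz hypothesis in Definition \ref{defn:Lip-exact} is precisely what makes both of these controls available.
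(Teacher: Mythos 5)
Your proposal is correct and is essentially the paper's argument: the paper simply reruns the proof of Proposition \ref{prop:iota*omega=0} with $S\circ c$ and $(\iota\circ c)^*\theta$ in place of $S$ and $\iota^*\theta$, which amounts to exactly the same localization to Darboux charts, the same splitting into a uniformly-convergent factor against an $L^\infty$-bounded derivative plus an $L^1$-pairing against a weak-$\star$-convergent derivative. The only (immaterial) difference is the order of operations: you apply the fundamental theorem of calculus at each finite $k$ and then pass to the limit, whereas the paper first identifies the weak derivative of $S\circ c$ with $(\iota\circ c)^*\theta$ and integrates at the end.
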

\begin{proof} We follow the proof above, with $S\circ c$ and $(\iota\circ c)^*\theta$ instead of $S$ and $\iota^*\theta$. All the arguments apply since $S\circ c$ and $\iota\circ c$ are still Lipschitz.
\end{proof}
%
%\begin{rem} Examination of the above proof shows that, for the purposes of this proposition,
%we could drop the condition that the sequence $S_k$ be equi-Lipschitz from the
%definition of Lipschitz-exact Lagrangian brane.
%\end{rem}

Our main example of Lipschitz-exact Lagrangian branes are the elements of $\mathcal{E}$.

\begin{lem}\label{graph potential}
	Let $v:M\to \R$ be a $C^{1,1}$ function and let $\varphi: T^*M\to T^*M$ be an exact symplectomorphism with $\varphi^*\theta-\theta= d g$. Then $\iota:M\to T^*M$ given by $\iota=\varphi\circ\Gamma_v$ is Lipschitz with Lipschitz primitive $S=v+\Gamma^*_v g$. That is, the triple $(M, \iota, S)$ is a Lipschitz-exact Lagrangian brane.
	In other words, $\mathcal{E} \subset \mathcal{L}$.
\end{lem}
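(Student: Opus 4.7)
The plan is to construct an approximating sequence for $(M, \iota, S)$ by smoothing $v$. Since $v \in C^{1,1}(M)$, standard mollification in a finite atlas, glued by a partition of unity, produces smooth functions $v_k : M \to \R$ with $v_k \to v$ in the $C^1$ topology and with $\{dv_k\}$ an equi-Lipschitz family, i.e.\ $\{v_k\}$ is uniformly bounded in $C^{1,1}(M)$. In particular $\Gamma_{v_k} \to \Gamma_v$ uniformly and the maps $\Gamma_{v_k}$ are equi-Lipschitz.

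Define $\iota_k := \varphi \circ \Gamma_{v_k}$ and $S_k := v_k + \Gamma_{v_k}^{*}g$. Each $\iota_k$ is a smooth embedding, and by Lemma~\ref{exact_primitive} (with $v_k$ smooth) we have $\iota_k^*\theta = dS_k$. This gives condition (a) of Definition~\ref{defn:Lip-exact}. For condition (b), I would verify the following three points in order.

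\textbf{Step 1 (equi-Lipschitz $\iota_k$).} The images $\Gamma_{v_k}(M)$ are contained in a common compact set $K \subset T^*M$ (because $\{dv_k\}_{k}$ is uniformly bounded in $L^\infty$, as a consequence of the equi-Lipschitz property together with $\|v_k\|_{C^0}$ being bounded on the compact manifold $M$). Since $\varphi$ is a smooth diffeomorphism, its restriction to any neighborhood of $K$ is Lipschitz with a constant $C_\varphi$ independent of $k$. Combining this with the equi-Lipschitz property of $\Gamma_{v_k}$ gives a uniform Lipschitz bound on $\iota_k$.

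\textbf{Step 2 (equi-Lipschitz $S_k$).} The family $\{v_k\}$ is equi-Lipschitz by construction. For the second summand $\Gamma_{v_k}^*g$, use that $g$ is smooth, hence Lipschitz on any compact neighborhood of $K$, and compose with the equi-Lipschitz $\Gamma_{v_k}$. Summing gives the desired uniform bound.

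\textbf{Step 3 (uniform convergence).} From $v_k \to v$ in $C^1$ we get $\Gamma_{v_k} \to \Gamma_v$ uniformly, and then $\iota_k = \varphi \circ \Gamma_{v_k} \to \varphi \circ \Gamma_v = \iota$ uniformly by continuity of $\varphi$ on $K$. Similarly $S_k = v_k + g \circ \Gamma_{v_k} \to v + g \circ \Gamma_v = S$ uniformly. The Lipschitz property of $\iota$ and $S$ follows from the uniform convergence of equi-Lipschitz sequences. This completes the construction and therefore $(M,\iota,S) \in \mathcal{L}$, establishing $\mathcal{E} \subset \mathcal{L}$.

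The only nontrivial point is producing the mollification $v_k$ with the required uniform $C^{1,1}$ control; I expect this to be routine using a finite atlas and a partition of unity, and the rest of the argument is bookkeeping with Lipschitz constants on the compact set $K$.
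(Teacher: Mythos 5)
Your proposal is correct and follows essentially the same route as the paper: approximate $v$ by smooth $v_k$ converging in $C^1$ and uniformly bounded in $W^{2,\infty}$, set $\iota_k=\varphi\circ\Gamma_{v_k}$ and $S_k=v_k+\Gamma_{v_k}^*g$, and invoke Lemma~\ref{exact_primitive} for exactness. The paper simply asserts the existence of such a sequence and the resulting equi-Lipschitz/uniform-convergence properties, whereas you spell out the mollification and the Lipschitz bookkeeping on a compact neighborhood of the images; this is just added detail, not a different argument.
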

\begin{proof}
	Let $v_k$ be a sequence of smooth functions converging to $v$ in $C^1(M)$ and bounded in $W^{2,\infty}(M)$. Take $\iota_k=\varphi\circ \Gamma_{v_k}$ and $S_k=v_k+\Gamma_{v_k}^*g$. Lemma \ref{exact_primitive} now implies that $\iota_k^*\theta_k=d S_k$. Moreover the $S_k$ are equi-Lipschitz and converge uniformly to $S$ and similarly the sequence $\iota_k$ is equi-Lipschitz and converges uniformly to  $\iota$.
\end{proof}

We now discuss possible alternative definitions of Lipschitz-exact Lagrangians. Instead of requiring that the sequences $\iota_k$ and $S_k$ be equi-Lipschitz, we could have required that both $\iota_k$ and $S_k$ converge to $\iota$ and $S$ in $C^{1,1}$ respectively. But this definition would be too strong and we wouldn't be able to prove Lemma \ref{graph potential} (since smooth functions are not dense in $C^{1,1}(M)$).

On the other direction we could have dropped the equi-Lipschitz condition. Then one would get a definition of Lagrangian analogous to those from $C^0$-symplectic geometry. We will not do this here since for our purposes, namely the existence of graph selectors, this seems too weak. Also, for the applications to Aubry--Mather theory our definition suffices. 

However, it would be interesting to study this more general definition. One could ask if, whenever $\iota$ is a smooth embedding, then $\iota:N\to T^*M$ is an exact Lagrangian in the usual sense. This would be a version of Eliashberg--Gromov $C^0$ rigidity for exact Lagrangians. We observe that Proposition \ref{prop:iota*omega=0} implies this is the case for our more restrictive definition. 
For this more general definition another interesting question would be the following.
\begin{ques}
	Let $\iota_k$ and $S_k$ be uniformly convergent sequences.
	Assume a normalization condition on the $S_k$, say $S_k(x_0) = 0$ for all $k$ at a given point $x_0 \in N$.
	Will the uniform limit of $\iota_k$ determine the limit of $S_k$?
\end{ques}
This question would be a counterpart of the fundamental uniqueness result in topological Hamiltonian
dynamics given in \cite{oh:hameo1,viterbo:unique,oh:gokova,buh-sey}. 
We observe that for our definition of Lipschitz-exact Lagrangian, Proposition \ref{prop:iota*omega=0} answers this question affirmatively.
We hope to come back to the study of these questions elsewhere.

\section{Action functional and Floer homology}
\subsection{Wrapped Floer homology}

In this subsection we briefly review some aspects of wrapped Floer
cohomology as in \cite{abouzaid2}. Our conventions for the action functional and gradings will differ from the ones in \cite{abouzaid2}, we will instead follow \cite{oh:jdg}, and naturally obtain a homology complex.
In this section and the next two, we will exclusively consider a Lagrangian submanifold $L$
as a subset $T^*M$ because all constructions will depend only on the image.

For a given  $(N,\iota, S)$, we denote
$$
L = \iota(N), \quad h = S\circ \iota^{-1}.
$$
Then by definition, we have $i^*\theta = dh$ on $L$.

Recall that $T^*M$ is a Liouville manifold, that means there is a
(codimension zero) submanifold with boundary $K\subseteq T^*M$, such that
$\theta$ restricts to a contact form on $\partial K$ and there is a diffeomorphism
\be\label{K}
T^*M\setminus K\simeq \left[1,+\infty\right)\times \partial K
\ee
identifying $\theta$ with $r\cdot \theta|_{\partial K}$, where $r$ is the coordinate in $\left[1,+\infty\right)$.

To see this we can fix a metric on $M$ and take $K=D_1^*M$, where
$$D_\rho^*M =\{(q,p)\in T^*M\vert \langle p,p\rangle \leq \rho^2\}.$$

We will only consider Lagrangian submanifolds which are compact or asymptotically conic:
\begin{defn}
	An (embedded) Lagrangian submanifold $L$ in $T^*M$ is called asymptotically conic if, for $K$ as in (\ref{K}), if we denote $L^K:=L\cap K$, then
	$\partial L^K$ is Legendrian in $\partial K$ and we have a diffeomorpphism $L \setminus L^K \simeq \left[1,+\infty\right)\times L^K$.
\end{defn}

In fact, the only non compact Lagrangians we shall consider in the present paper are cotangent
fibers $T^*_qM$, $q\in M$, which are obviously conic.

We need to choose some auxiliary data to define the wrapped Floer cohomology.
We fix a Hamiltonian function $H:T^*M\to\mathbb{R}$ \emph{quadratic at infinity}, which means it is equal to $r^2$ on $T^*M\setminus K$. Note we only impose a restriction on $H$ outside of $K$ and so these Hamiltonians are different from the Tonelli Hamiltonians we consider in Section 7.

Given a pair of Lagrangian submanifolds $(L_0,h_0)$, $(L_1, h_1)$ we define a function on $\mathcal{P}(L_0,L_1)$ the space of paths $\gamma:[0,1]\to T^*M$ satisfying $\gamma(0)\in L_0$, $\gamma(1)\in L_1$:
\be\label{eq:AAH}
\mathcal{A}_H(\gamma)=\int_{[0,1]}( \gamma^*\theta - H\circ\gamma \, dt)+h_0(\gamma(0))-h_1(\gamma(1)).
\ee
We call this function the \emph{action functional}.

\begin{rem}\label{rem:sign} Here we use the sign convention of \cite{oh:jdg,kasturi-oh1} for the
	action functional, so that when $h_1 = h_0 = 0$, the action functional becomes
	the classical action functional in the mechanics literature. We remark that
	this definition is the negative of the one used in \cite{abouzaid2} but coincides with
	that of \cite{bernard1}.
	We regard the definition of the natural homological complex in Floer theory as the
	homological complex and then by reversing the flow, we identify the
	corresponding complex with the cohomological one. In this way, our definition of
	wrapped Floer homology can be identified with that of the cohomology in \cite{abouzaid2}.
\end{rem}

The first variation formula of this action functional is given by
\be\label{eq:1stvariation}
d\CA_H(\gamma)(\xi) = \int_{[0,1]} \omega(\dot\gamma - X_H(t,\gamma(t)), \xi(t))\,dt,
\ee
where $X_H$ is the Hamiltonian vector field associated to $H$, that is $\omega(X_H,\cdot)= dH$.

The following proposition immediately follows from \eqref{eq:1stvariation}.

\begin{prop}\label{critA}
	Let $\gamma\in\mathcal{P}(L_0,L_1)$, then $\gamma$ is a critical point of $\mathcal{A}_H$ if and only if it satisfies $\dot{\gamma}(t)=X_H(\gamma(t))$. We denote by $\mathcal{X}(L_0,L_1)$ the set of critical points
	pf $\CA_H$.
\end{prop}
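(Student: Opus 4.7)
The proposition is an immediate consequence of the first variation formula \eqref{eq:1stvariation}, so the plan is mainly to verify that formula and then deduce both implications from it.

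First, I would check the first variation formula itself, since the proposition reduces to it. Take a smooth variation $\gamma_s \in \mathcal{P}(L_0, L_1)$ of $\gamma_0 = \gamma$ with variation vector field $\xi$ satisfying $\xi(0) \in T_{\gamma(0)}L_0$ and $\xi(1) \in T_{\gamma(1)}L_1$. Differentiating each term of \eqref{eq:AAH}, Cartan's formula gives
\begin{equation*}
\frac{d}{ds}\Big|_{s=0} \int_0^1 \gamma_s^*\theta = \int_0^1 d\theta(\xi,\dot\gamma)\, dt + \theta(\xi(1)) - \theta(\xi(0)) = \int_0^1 \omega(\dot\gamma,\xi)\, dt + \theta(\xi(1)) - \theta(\xi(0)),
\end{equation*}
while the Hamiltonian term contributes $-\int_0^1 dH(\xi)\, dt = \int_0^1 \omega(\xi, X_H)\, dt$. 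Finally, because $i_{L_j}^*\theta = dh_j$ and $\xi(j) \in T_{\gamma(j)}L_j$ for $j=0,1$, the boundary contributions from $h_0(\gamma_s(0)) - h_1(\gamma_s(1))$ are precisely $\theta(\xi(0)) - \theta(\xi(1))$, which cancel the boundary terms above. Summing everything produces \eqref{eq:1stvariation}.

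Given \eqref{eq:1stvariation}, the forward implication is trivial: if $\dot\gamma = X_H(t,\gamma(t))$, then the integrand vanishes identically, so $d\CA_H(\gamma)(\xi) = 0$ for every admissible variation $\xi$.

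For the converse, assume $d\CA_H(\gamma)(\xi) = 0$ for all $\xi$ tangent to $\mathcal{P}(L_0,L_1)$ at $\gamma$. Restrict attention to variations $\xi$ that are compactly supported in the open interval $(0,1)$; these automatically satisfy the Lagrangian boundary conditions and hence are admissible. For such $\xi$,
\begin{equation*}
\int_0^1 \omega\bigl(\dot\gamma(t) - X_H(t,\gamma(t)),\, \xi(t)\bigr)\, dt = 0.
\end{equation*}
Since $\omega$ is non-degenerate and $\xi$ may be chosen to range over all smooth vector fields along $\gamma|_{(0,1)}$ with compact support, the fundamental lemma of the calculus of variations forces $\dot\gamma(t) = X_H(t,\gamma(t))$ on $(0,1)$, and continuity extends this to $[0,1]$. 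This completes the characterization of $\mathcal{X}(L_0,L_1)$.

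There is no real obstacle here; the only minor point requiring care is making sure compactly supported variations lie in the admissible class (so that the boundary terms in the derivation of \eqref{eq:1stvariation} need not be invoked again for the converse), and that the hypothesis $i^*\theta = dh$ on each $L_j$ is used exactly to cancel the boundary terms in the first variation.
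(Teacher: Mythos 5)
Your proposal is correct and follows the same route as the paper, which simply asserts that the proposition "immediately follows" from the first variation formula \eqref{eq:1stvariation}; you fill in the derivation of that formula (with the boundary terms correctly cancelled using $i^*\theta = dh_j$) and the standard fundamental-lemma argument for the converse. No issues.
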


Given the conditions imposed on the Lagrangians we have the following lemma (see \cite{abouzaid}).

\begin{lem}
	Given $a\in \mathbb{R}$ denote by $\mathcal{X}_{\leq a}(L_0,L_1)$ the set of critical points $\gamma$ of $\mathcal{A}_H$  with $\mathcal{A}_H(\gamma)\leq a$. This set is compact and after generic perturbation of $L_0$ (or $L_1$) it is finite.
\end{lem}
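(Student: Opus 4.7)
The plan is to first establish compactness of $\CX_{\leq a}(L_0,L_1)$ using conservation of $H$ along Hamiltonian chords, and then deduce finiteness by Sard--Smale transversality applied to the intersection $\varphi_H^1(L_0)\cap L_1$.

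For the compactness step, I would exploit that $H$ is autonomous and quadratic at infinity. By Proposition \ref{critA}, each $\gamma\in\CX(L_0,L_1)$ satisfies $\dot\gamma=X_H(\gamma)$, so $E:=H\circ\gamma$ is constant along $\gamma$. On the conical end $T^*M\setminus K\cong[1,\infty)\times\partial K$, $H=r^2$ is homogeneous of degree $2$ under the Liouville vector field $Z=r\partial_r$, so the standard identity $\theta(X_H)=\omega(X_H,Z)=dH(Z)=ZH$ gives $\theta(X_H)=2H$. Let $C_K:=\max_K H<\infty$. If $E>C_K$, the chord $\gamma$ must be entirely outside $K$, confined to a single level $\{r=r_0\}$ with $r_0^2=E$, and the integral in \eqref{eq:AAH} reduces to
\[
\int_0^1\bigl(\gamma^*\theta-H\,dt\bigr)=\int_0^1(2H-H)\,dt=E.
\]
Since $L_0$ is compact (so $h_0$ is bounded) and a cotangent fiber $T^*_qM$ carries the canonical primitive $h_1=0$ (as $\theta$ pulls back to zero on the fiber), one obtains $\CA_H(\gamma)\geq E-\|h_0\|_\infty-\|h_1\|_\infty$. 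The bound $\CA_H(\gamma)\leq a$ therefore forces $E\leq E_a:=\max\{C_K,\,a+\|h_0\|_\infty+\|h_1\|_\infty\}$, and since $H$ is proper, $\{H\leq E_a\}$ is compact; continuous dependence of solutions to Hamilton's equation then places all of $\CX_{\leq a}$ in this fixed compact subset.

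For finiteness I would use the identification $\CX(L_0,L_1)\leftrightarrow \varphi_H^1(L_0)\cap L_1$ via $\gamma\mapsto\gamma(1)$. Since $\varphi_H^1$ is a diffeomorphism, $\varphi_H^1(L_0)$ is again a smooth Lagrangian submanifold, and a standard Sard--Smale argument shows that a residual set of smooth (Hamiltonian) perturbations of $L_0$ renders $\varphi_H^1(L_0)\pitchfork L_1$. The resulting intersection points are isolated, and being contained in the compact set from the previous step, they are finite in number.

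The main obstacle is controlling the action from below by the energy: one must rule out chords that spend most of their time in the bounded region $K$ while still reaching very high $r$. Conservation of $H$ sidesteps this via the dichotomy $E\leq C_K$ vs.\ $E>C_K$; in the second regime the chord is trapped at one level of the Reeb dynamics on $\partial K$ and the computation $\int(\gamma^*\theta-H\,dt)=E$ is exact, so no fine bookkeeping of the time spent inside $K$ is required.
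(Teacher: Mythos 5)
Your proof is correct. Note that the paper does not actually prove this lemma: it is stated with a pointer to Abouzaid's work, so there is no in-text argument to compare against. What you supply is the standard argument one would expect there, and the two key points check out in the paper's conventions: with $\omega=-d\theta$, $\omega(X_H,\cdot)=dH$ and Liouville field $Z$ satisfying $\iota_Z\omega=-\theta$, one indeed gets $\theta(X_H)=dH(Z)=2H$ on the cone, so the action of a chord trapped at level $r_0=\sqrt{E}$ is $E$ plus boundary terms, and the dichotomy $E\leq C_K$ versus $E>C_K$ yields the uniform energy bound. One remark: for the pair the paper actually uses --- $L_0$ compact and $L_1=T_q^*M$ --- conservation of $H$ alone already gives $E=H(\gamma(0))\leq\max_{L_0}H$, so the action--energy identity is not needed; your more elaborate estimate is what one needs in the genuinely wrapped situation where both Lagrangians are conic (e.g.\ two cotangent fibers), and there one should also note that the primitive of an asymptotically conic exact Lagrangian is locally constant, hence bounded, on the conical end because $\theta$ vanishes on the cone over a Legendrian. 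Two small points to tighten: compactness of $\mathcal{X}_{\leq a}$ also uses that it is closed (the chords form a closed set, identified with $L_0\cap(\phi_H^1)^{-1}(L_1)$, and $\mathcal{A}_H$ is continuous on it), and the generic perturbation of $L_0$ should be taken through Hamiltonian isotopies supported appropriately so that exactness, the conic structure at infinity, and the boundedness of $h_0$ are preserved; with that, transversality plus the compactness you established gives finiteness as you say.
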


In order to define wrapped Floer homology as a $\mathbb{Z}$-graded abelian group we need to impose some topological restrictions on the Lagrangians.

From now on we assume that $M$ is orientable. Then we choose a volume form in $M$ and we complexify it (using a compatible almost complex structure). This gives a quadratic complex volume form $\eta$ on $T^*M$ (see details in \cite{abouzaid2} and Section 11 of \cite{seidel:book}). For each point $p$ in a Lagrangian $L$, we can evaluate $\eta/|\eta|$ on a basis for $T_p L$. This is independent of the choice of basis and therefore defines a map
$$\frac{\eta}{|\eta|}:L\longrightarrow S^1.$$
If the Maslov class of $L$ vanishes (see Section 11 of \cite{seidel:book} for details) we can choose a lift of this map to a real-valued function. We call such choice a grading on $L$.
By choosing gradings on $L_0$ and $L_1$ we can assign a Maslov index to each element $\gamma\in\mathcal{X}(L_0,L_1)$ which we denote by $|\gamma|\in \mathbb{Z}$.

We define the \emph{wrapped Floer complex} as

\be\label{eq:complex}
CW_*(L_0,L_1)= \bigoplus_{i}CW_i(L_0,L_1)=\bigoplus_{i}\bigoplus_{\substack{x\in\mathcal{X}(L_0,L_1)\\|x|=i}}x\cdot\mathbb{Z}.
\ee

That is $CW_i(L_0,L_1)$ is the free abelian group generated by Hamiltonian chords of Maslov index $i$.

Next we define the \emph{Floer differential}. For this we need to consider a family $\{J_t\}_{t\in[0,1]}$ of almost complex structures on $T^*M$ compatible with $\omega$. Additionally we assume that in the complement of $K$ each complex structure $J$ satisfies
$$\theta\circ J=dr.$$
We consider the Cauchy--Riemann equation in the space of maps $u:\mathbb{R}\times [0,1]\to T^*M$
\be\label{CR}
\begin{cases}
	\frac{\partial u}{\partial \tau}+J_t\left(\frac{\partial u}{\partial t} - X_{H}(u)\right)=0,\\
	u(\tau,0)\in L_0, \ u(\tau,1)\in L_1.
\end{cases}
\ee
Given $x_0,x_1\in\mathcal{X}(L_0,L_1)$ we denote by $\mathcal{M}(x_0,x_1)$ the set of maps $u$ satisfying the above equation and converging (exponentially) to $x_0$ at $-\infty$ and to $x_1$ at $+\infty$, quotiented by translations on the $\tau$-direction.
For generic $\{J_t\}$, the moduli spaces $\mathcal{M}(x_0,x_1)$ are smooth manifolds of dimension $|x_0|-|x_1|-1$.

To define orientations on these moduli spaces coherently we need to impose one more topological restriction on the Lagrangians. Denote by $b=\pi^*{w}_2(TM)\in H^2(T^*M,\mathbb{Z}_2)$ the pullback of the second Stiefel-Whitney class of $M$. We say a Lagrangian $L$ is \emph{relatively spin} if $b|_{L}=w_2(TL)$ and a choice of relative spin structure is defined to be a spin structure on the vector bundle $TL\oplus\pi^*(TM)\vert_{ L}$.

Relative spin structures on $L_0$ and $L_1$ then determine orientations on the moduli spaces $\mathcal{M}(x_0,x_1)$ (see \cite{abouzaid} for details).

When $|x_0|=|x_1|+1$, $\mathcal{M}(x_0,x_1)$ is a zero-dimensional oriented manifold and Gromov compactness implies that it is compact. Hence we define $\#\mathcal{M}(x_0,x_1)$ as the signed count of elements in $\mathcal{M}(x_0,x_1)$. Finally we define

$$\del :CW_i(L_0,L_1)\to CW_{i-1}(L_0,L_1)$$
as
$$\del (x_0)=\sum_{\substack{x_1\in\mathcal{X}(L_0,L_1)\\ |x_0|=|x_1|+1}}\#\mathcal{M}(x_0,x_1)\cdot x_1.$$

A standard argument implies the following
\begin{lem}
	The map $\del$ is a differential, i.e. $\del^2=0.$
\end{lem}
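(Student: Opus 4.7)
The plan is the standard argument that $\partial^2 = 0$ follows from analyzing the boundary of the compactification of the one-dimensional Floer moduli spaces. Namely, for $x_0, x_2 \in \mathcal{X}(L_0,L_1)$ with $|x_0| = |x_2|+2$, the moduli space $\mathcal{M}(x_0,x_2)$ is a smooth oriented $1$-manifold by the transversality statement already recorded for generic $\{J_t\}$. The proposal is to show that $\mathcal{M}(x_0,x_2)$ admits a compactification $\overline{\mathcal{M}}(x_0,x_2)$ whose boundary, as an oriented $0$-manifold, is
\[
\partial \overline{\mathcal{M}}(x_0,x_2) \;=\; \bigsqcup_{\substack{x_1 \in \mathcal{X}(L_0,L_1) \\ |x_1| = |x_0|-1}} \mathcal{M}(x_0,x_1) \times \mathcal{M}(x_1,x_2),
\]
where the identification is induced by the gluing construction. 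Since the signed count of boundary points of a compact oriented $1$-manifold is zero, reading off the coefficient of $x_2$ in $\partial(\partial x_0)$ yields $\partial^2 = 0$.

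The first step is compactness. Here one must rule out two phenomena: bubbling of holomorphic spheres or discs, and escape of solutions to infinity in the non-compact end $[1,\infty) \times \partial K$. Sphere bubbles are excluded because $[\omega]$ vanishes on $\pi_2(T^*M)$ (in fact $T^*M$ is exact), and disc bubbles with boundary on $L_0$ or $L_1$ are excluded by exactness of the Lagrangians together with the fact that the action functional $\mathcal{A}_H$ is single-valued. To prevent trajectories from escaping to infinity, one invokes the maximum principle on the radial coordinate $r$: since $J_t$ is of contact type at infinity ($\theta \circ J = dr$) and $H = r^2$ there, the function $r \circ u$ satisfies a subharmonic-type inequality along any solution of \eqref{CR} whose asymptotes $x_0, x_2$ lie in a compact region, forcing the image of $u$ to stay in a bounded region. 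Combined with the energy identity coming from \eqref{eq:1stvariation}, which gives
\[
E(u) \;=\; \int_{\mathbb{R}\times[0,1]} |\partial_\tau u|_{J_t}^2 \, d\tau\, dt \;=\; \mathcal{A}_H(x_0) - \mathcal{A}_H(x_2),
\]
Gromov compactness then yields a compactification by broken trajectories with at most one breaking (by the dimension count).

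The second step is the gluing theorem: for every pair $(u_1, u_2) \in \mathcal{M}(x_0,x_1) \times \mathcal{M}(x_1,x_2)$ with $|x_1| = |x_0|-1$, there is a unique (up to reparametrization) family $u_\rho \in \mathcal{M}(x_0,x_2)$ parametrized by a large gluing parameter $\rho$, converging to the broken configuration as $\rho \to \infty$. This identifies the boundary of $\overline{\mathcal{M}}(x_0,x_2)$ with the claimed product of $0$-dimensional moduli spaces.

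The final step is to check that the induced orientation on the boundary, determined from the relative spin structures on $L_0$ and $L_1$ via the coherent orientation scheme of Floer--Hofer recalled in \cite{abouzaid}, agrees with the product orientation on $\mathcal{M}(x_0,x_1) \times \mathcal{M}(x_1,x_2)$ up to an overall sign that is independent of $(x_0,x_1,x_2)$. Summing over all intermediate $x_1$ then gives, for each $x_2$, the identity
\[
\sum_{x_1} \#\mathcal{M}(x_0,x_1)\cdot \#\mathcal{M}(x_1,x_2) \;=\; 0,
\]
which is exactly the coefficient of $x_2$ in $\partial^2 x_0$. The hardest and most technical step is the combination of the maximum principle at infinity with coherent orientations, since the non-compactness of $T^*M$ (and of the asymptotically conic Lagrangians like the cotangent fibers) is what distinguishes wrapped Floer homology from the compact case and is the place where the quadratic-at-infinity assumption on $H$ is essential.
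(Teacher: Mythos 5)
Your proof is correct and is precisely the ``standard argument'' that the paper invokes without writing out: the paper offers no details for this lemma, and your account --- compactification of the one-dimensional moduli spaces, exclusion of sphere and disc bubbles by exactness, the maximum principle at the conical end to obtain $C^0$-bounds (the same input the paper later cites from Abouzaid--Seidel for Lemma \ref{prop:C0bound}), gluing, and coherent orientations from the relative spin structures --- is exactly what that standard argument consists of. The energy identity and the direction of the action inequality are also consistent with the paper's sign conventions, so nothing needs to be changed.
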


This differential has one additional property which follows from the following standard lemma.
(See  \cite{oh:jdg} or \cite{abouzaid2} for the proof.)

\begin{lem}
	If $\mathcal{M}(x_0,x_1)$ is non-empty, then $$\mathcal{A}_H(x_0)\geq\mathcal{A}_H(x_1).$$
\end{lem}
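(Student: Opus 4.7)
The plan is to verify this standard energy-decrease property of Floer trajectories by direct computation. The strategy is to differentiate the action along a Floer trajectory $u$, use the first variation formula \eqref{eq:1stvariation} with variation $\xi = \partial_\tau u$, and then substitute the Floer equation \eqref{CR} to exhibit the derivative as a non-positive quantity; the lemma then follows by integration over $\R$.

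First I would pick $u \in \mathcal{M}(x_0, x_1)$ and regard it as a one-parameter family $\tau \mapsto u(\tau, \cdot)$ of paths in $\mathcal{P}(L_0, L_1)$. Note that the variation $\xi(t) := \partial_\tau u(\tau, t)$ is tangent to $L_0$ (resp.\ $L_1$) at $t=0$ (resp.\ $t=1$), so the first variation formula \eqref{eq:1stvariation} applies and gives
\begin{equation*}
\frac{d}{d\tau}\mathcal{A}_H(u(\tau,\cdot)) = \int_0^1 \omega\bigl(\partial_t u - X_H(t,u),\, \partial_\tau u\bigr)\, dt.
\end{equation*}
The first equation in \eqref{CR} rewrites as $\partial_t u - X_H = J_t\,\partial_\tau u$, since $J_t^2 = -\mathrm{Id}$. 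Substituting and using that $g_t(v,w) := \omega(v, J_t w)$ is a Riemannian metric (by $\omega$-compatibility of $J_t$), one computes
\begin{equation*}
\omega(J_t \partial_\tau u,\partial_\tau u) = -\omega(\partial_\tau u, J_t \partial_\tau u) = -g_t(\partial_\tau u, \partial_\tau u) \leq 0,
\end{equation*}
so $\frac{d}{d\tau}\mathcal{A}_H(u(\tau,\cdot)) \leq 0$.

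To conclude, I would integrate from $-\infty$ to $+\infty$ and use the exponential convergence $u(\tau,\cdot) \to x_0$ as $\tau \to -\infty$ and $u(\tau,\cdot)\to x_1$ as $\tau \to +\infty$ (which justifies interchanging the limits with $\mathcal{A}_H$ and guarantees the integral is finite). This yields
\begin{equation*}
\mathcal{A}_H(x_1) - \mathcal{A}_H(x_0) = \int_{-\infty}^{+\infty} \frac{d}{d\tau}\mathcal{A}_H(u(\tau,\cdot))\, d\tau = -\int_{-\infty}^{+\infty}\!\!\int_0^1 |\partial_\tau u|^2_{g_t}\,dt\,d\tau \leq 0,
\end{equation*}
giving $\mathcal{A}_H(x_0) \geq \mathcal{A}_H(x_1)$ as required.

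There is no real obstacle here, only bookkeeping: the main point requiring care is the sign, because the paper follows the convention of \cite{oh:jdg} rather than that of \cite{abouzaid2} (cf.\ Remark \ref{rem:sign}), so I would double-check that the first variation formula \eqref{eq:1stvariation} together with the Floer equation \eqref{CR} actually yields a non-positive (not non-negative) $\tau$-derivative, which is exactly what forces the inequality to go in the stated direction.
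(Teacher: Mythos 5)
Your proof is correct and is exactly the standard energy-identity argument that the paper itself omits, deferring to \cite{oh:jdg} and \cite{abouzaid2}: differentiate $\mathcal{A}_H$ along the trajectory using \eqref{eq:1stvariation} with $\xi=\partial_\tau u$ (legitimate since $\partial_\tau u$ is tangent to $L_0$, $L_1$ at the boundary, so the $\theta$- and $dh_i$-boundary terms cancel), substitute \eqref{CR}, and integrate. Your sign bookkeeping is consistent with the paper's conventions — compare Lemma \ref{prop:action-diff}, whose $\rho'\equiv 0$ special case is precisely your identity $\mathcal{A}_H(x_1)-\mathcal{A}_H(x_0)=-\int\!\!\int|\partial_t u - X_H(u)|^2_{J_t}$.
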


We define a filtration $\nu: CW_*(L_0,L_1)\to \mathbb{R}$ as follows: given $\alpha=\sum_xa_xx\ \in CW_*(L_0,L_1)$ we take $\nu(\alpha)=\max_{a_x\neq 0}\{\mathcal{A}_H(x)\}.$ The above lemma implies that
$$
CW_*^{\leq a}(L_0,L_1)=\{\alpha\in CW(L_0,L_1)|\nu(\alpha)\leq a\}
$$
is a subcomplex of $CW_*(L_0,L_1)$.

We define the \emph{wrapped Floer homologies}
$$
HW_*(L_0,L_1)=\ker \del/ \Im \del
$$
and
$$
HW_*^{\leq a}(L_0,L_1)=\ker \del^{\leq a}/ \Im \del^{\leq a}
$$
where $\del^{\leq a}$ is the restriction of $\del$ to the subcomplex $CW_*^{\leq a}(L_0,L_1)$.
Note that the inclusion $j_a:CW_*^{\leq a}(L_0,L_1)\hookrightarrow CW_*(L_0,L_1)$ induces a map
$$
(j_a)_*:HW_*^{\leq a}(L_0,L_1)\to HW_*(L_0,L_1).
$$

One can be also define the wrapped Floer cohomology $HW^*(L_1,L_0)$ and its filtration
$HW^*_{\geq a}(L_1,L_0)$ following the canonical construction of cohomology.
This can be canonically identified with the complex obtained by `reversing the flow'.
In the present case, the latter construction then will lead to the
wrapped Floer cohomology defined in \cite{abouzaid2}.

\subsection{Floer homology of an exact Lagrangian and a fiber}

The wrapped Floer complexes defined in the previous subsection for each pair of Lagrangians can be combined to define an A$_{\infty}$-category, $\mathcal{W}(T^*M)$ known as the \emph{wrapped Fukaya category} of $T^*M$.
%We describe it very briefly. Objects are the graded, relatively spin Lagrangians considered in the previous section, and morphisms between $L_0$ and $L_1$ are the  wrapped Floer complexes $CW^*(L_0,L_1)$. One can then define operations
%$$
%\mu^k:CW^*(L_k, L_{k-1})\otimes \ldots \otimes CW^*(L_1,L_0)\to CW^*(L_k,L_0)
%$$
%by counting solutions to a Cauchy Riemann equation similar to (\ref{CR}) but this time the domain is a disk with $(k+1)$ boundary punctures and the piece of the boundary between the $i$-th and the $(i+1)$-th puncture is mapped to $L_i$.
The full details of this construction are intricate and can be found in \cite{AS} and \cite{abouzaid}.

\begin{rem} We would like to note that in \cite{abouzaid} the complex
	$CW^*(L,L')$ is associated to the space of paths running from
	$L$ to $L'$ which is opposite to our convention. Our enumeration of
	the $L_i$'s in the pair $CW^*(L_i,L_{i-1})$ is the opposite to the one given in
	\cite{abouzaid}. Our present convention is consistent with that of \cite{fooo:book}.
	(More specifically see Section 2.3 \cite{fooo:book} and also Remark \ref{rem:sign} of the
	present paper.)
\end{rem}

The work of Abouzaid \cite{abouzaid}, \cite{abouzaid2} gives a complete description of the category $\mathcal{W}(T^*M)$. In \cite{abouzaid} the author proves that any cotangent fiber $T^*_qM$ generates this category. We will not make use of this statement in its entirety, we will just use a particular consequence of this fact proved in Appendix C of \cite{abouzaid2}.

\begin{thm}[Abouzaid]
	Assume $M$ is orientable and let $L$ be an exact, compact Lagrangian with vanishing Maslov class, then
	\begin{itemize}
		\item[(a)] L is relatively spin;
		\item[(b)] the Floer cohomology $HW^*(T^*_qM,L)\simeq \mathbb{Z}$ is free of rank 1.
	\end{itemize}
\end{thm}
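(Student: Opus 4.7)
Since the statement is attributed to Abouzaid and essentially records the main result of \cite[Appendix C]{abouzaid2}, my plan is to outline the structure one would follow rather than to reprove the theorem from scratch.

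For part (a), the argument is topological. Fix a Riemannian metric on $M$ and the induced compatible almost complex structure $J$ on $T^*M$; this identifies $T(T^*M) \cong \pi^*(TM \otimes \mathbb{C})$ as a complex vector bundle. Restricted to $L$, the tangent bundle $TL$ is a totally real subbundle. The vanishing of the Maslov class of $L$ means that the classifying map $L \to U(n)/O(n)$ comparing $TL$ with the horizontal lift of $\pi^*TM|_L$ is null-homotopic on $\pi_1$, and standard obstruction theory forces $TL$ and $\pi^*TM|_L$ to be stably isomorphic as real vector bundles. Taking second Stiefel--Whitney classes yields
\[
w_2(TL) \;=\; w_2(\pi^*TM|_L) \;=\; i^*\pi^*w_2(TM) \;=\; b|_L,
\]
which is precisely the relative spin condition. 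A stable trivialization of $TL \oplus \pi^*TM|_L$ then furnishes a compatible relative spin structure.

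For part (b), which is the substantive content, my plan is to combine three inputs, all from the work of Abouzaid and his collaborators:
\begin{enumerate}
\item[(i)] The generation theorem of \cite{abouzaid}: the cotangent fiber $T^*_qM$ split-generates the wrapped Fukaya category $\mathcal{W}(T^*M)$, so any exact compact Lagrangian becomes, in the derived category, a twisted complex built from copies of $T^*_qM$.
\item[(ii)] The computation $HW^*(T^*_qM, M) \cong \mathbb{Z}$ concentrated in a single degree for the zero section $M$: after the standard quadratic wrapping, the fiber meets the zero section transversely in a single Hamiltonian chord of unambiguous Maslov index, giving a rank-one complex whose differential vanishes for degree reasons.
\item[(iii)] The fact that the Yoneda module over $CW^*(T^*_qM,T^*_qM)$ associated to $L$ is quasi-isomorphic to the one associated to $M$, which is proved using the methods of \cite{FSS,kra,abouzaid2}.
\end{enumerate}
Granted (i)--(iii), the theorem follows: $HW^*(T^*_qM,L)$ is computed as $\mathrm{Hom}$ into the Yoneda module of $L$, and by (iii) this agrees with the corresponding computation for the zero section $M$, which by (ii) is $\mathbb{Z}$.

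The main obstacle is unambiguously step (iii). Its proof uses essentially the full machinery of \cite{FSS,kra,abouzaid2}: one must control the $A_\infty$-module structure of $L$ over the Pontryagin algebra $CW^*(T^*_qM,T^*_qM) \simeq C_{-*}(\Omega_q M)$ finely enough to match it with that of $M$, and this in turn relies on homotopy-theoretic input to the effect that the projection $L \to M$ behaves like an equivalence. There is no short-cut around these ingredients, which is why the authors, like me, will invoke Abouzaid's theorem as a black box.
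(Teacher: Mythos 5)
The paper does not prove this statement at all: it is quoted as Abouzaid's theorem and attributed to Appendix C of \cite{abouzaid2} (with the remark that the proof requires extending the wrapped Fukaya category by local systems), so your bottom line --- that the result must be invoked as a black box --- is exactly what the authors do.

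Since you did offer a sketch, one step in it is genuinely wrong and worth flagging: your derivation of (a). The vanishing of the Maslov class only kills the \emph{primary} obstruction to null-homotoping the stable Lagrangian Gauss map $L \to U/O$, namely the class in $H^1(L;\pi_1(U/O)) = H^1(L;\mathbb{Z})$, which \emph{is} the Maslov class. Standard obstruction theory then hands you a \emph{secondary} obstruction in $H^2(L;\pi_2(U/O)) = H^2(L;\mathbb{Z}_2)$, and this class is precisely the relative second Stiefel--Whitney class $w_2(TL) + b|_L$. So the relative spin condition is not a formal consequence of Maslov vanishing --- it is the next obstruction in the tower, and proving that it vanishes is exactly the nontrivial content of part (a). (Abouzaid's actual argument does not proceed by pure obstruction theory; it uses Floer-theoretic input with $\mathbb{Z}_2$ coefficients and local systems. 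Stable triviality of the Gauss map, which your phrase ``stably isomorphic as real vector bundles'' would amount to, is a far deeper statement.) Your outline of (b) is a reasonable description of the generation-plus-module-comparison strategy, with the honest caveat, which you make, that step (iii) carries essentially all of the weight and cannot be shortcut.
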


Note that a cotangent fiber $T_q^*M$ has vanishing Maslov class and it is relatively spin since it is contractible and  $b_{|T_q^*M}=0$. Therefore part (a) implies that the Floer cohomology in (b) is well-defined. In fact the proof of the above theorem requires an extension of the wrapped Fukaya category that includes local systems on the Lagrangians - this is carried out in \cite{abouzaid2}.

There is another deep result which shows that the Maslov class condition in the above theorem is superfluous.
\begin{thm}[Abouzaid--Kragh]
	Let L be a compact exact Lagrangian in $T^*M$, then $L$ has vanishing Maslov class.
\end{thm}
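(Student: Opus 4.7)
The plan is to reduce the vanishing of the Maslov class to a statement about the stable Gauss map of $L$ and then exploit the deep consequence that $L$ is essentially indistinguishable from the zero-section through Floer-theoretic means. Recall that for an oriented Lagrangian $L \subset T^*M$ the Maslov class $\mu_L \in H^1(L;\Z)$ is the pullback of the generator of $H^1(U(n)/SO(n);\Z) \cong \Z$ under the Gauss map $\gamma_L \colon L \to U(n)/SO(n)$ determined by a choice of compatible almost complex structure on $T^*M$. Thus it suffices to prove that $\gamma_L$ lifts, up to homotopy, to the universal cover of $U(n)/SO(n)$, or equivalently that the composite $L \xrightarrow{\gamma_L} U(n)/SO(n) \to K(\Z,1)$ is null-homotopic.

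First I would use the preceding theorem of Abouzaid together with Abouzaid's generation theorem for $\mathcal{W}(T^*M)$ to establish that the projection $\pi|_L \colon L \to M$ is a homotopy equivalence. The fact that $HW^*(T^*_qM,L) \cong \Z$ for every $q \in M$, combined with the module structure of wrapped Floer cohomology and the fact that the cotangent fibers generate $\mathcal{W}(T^*M)$, forces $\pi|_L$ to induce isomorphisms on all homology groups (after a careful analysis using local systems to access $\pi_1(M)$, which is where the extension of the wrapped category to local systems in \cite{abouzaid2} is essential). A Whitehead-type argument then upgrades this to a homotopy equivalence. Consequently, up to homotopy, $\gamma_L$ factors through $M$, so $\mu_L = (\pi|_L)^* \nu$ for a uniquely determined class $\nu \in H^1(M;\Z)$.

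The main obstacle, and the heart of the proof, will be to show that $\nu = 0$. My approach would be to follow Kragh's strategy of producing a parameterized (twisted) generating function quadratic at infinity for $L$ and then using the fact that such a generating function forces the stable Lagrangian Gauss map to be null-homotopic when composed with the natural map $U/O \to B\mathrm{GL}_1(S)$ into the classifying space of units of the sphere spectrum (i.e., the $J$-homomorphism picture). The existence of such a generating function is obtained by interpreting $L$ as defining a point in the appropriate moduli space of Lagrangian immersions and using the parameterized $h$-principle together with the homotopy equivalence from Step 1 to obstruct nontrivial twisting. Concretely, one shows the stable Gauss map factors through a map $M \to U/O$ whose composition with $U/O \to B O$ classifies $-TM$, and Kragh's computation then shows that this rigidity is incompatible with $\nu \ne 0$.

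The hard part is precisely the generating function construction in the non-simply-connected and possibly Maslov-nontrivial setting: one must work with Waldhausen's $A$-theory and the assembly map in order to identify the obstructions, and ensure that the quadratic-at-infinity primitive one produces is honest rather than merely a formal object. Once this is in place, the null-homotopy of the relevant map gives $\nu = 0$, hence $\mu_L = 0$, completing the proof.
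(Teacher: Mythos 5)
There is a genuine gap, and it is a circularity. Your Step 1 invokes the computation $HW^*(T^*_qM,L)\cong\mathbb{Z}$ (together with generation of $\mathcal{W}(T^*M)$ by a fiber) to get that $\pi|_L\colon L\to M$ is a homotopy equivalence. But that computation is Abouzaid's theorem stated immediately before this one in the paper, and its hypothesis is precisely that $L$ has \emph{vanishing Maslov class} --- the statement you are trying to prove. The vanishing of the Maslov class is not a technical convenience there: it is what allows one to put a $\mathbb{Z}$-grading on the wrapped Floer complex in the first place, so you cannot even formulate ``free of rank one'' in the needed form without it. The whole point of the Abouzaid--Kragh theorem is to remove that hypothesis, so its proof must proceed independently of the graded Floer-theoretic package; the correct logical order is the reverse of yours (first kill the Maslov class by other means, then the Floer results apply).

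For what it's worth, the paper does not prove this theorem at all --- it is quoted as an external result, with the remark that the proof is due to Abouzaid in an appendix to Kragh's paper and ``involves ideas from algebraic topology, namely the construction of Viterbo's transfer map on symplectic cohomology as a map of spectra.'' Your Steps 2--3 do gesture at the right machinery (Kragh's parametrized/twisted generating functions quadratic at infinity, finite-dimensional approximation, and the stable Gauss map), but as you yourself concede, the ``hard part'' --- actually producing the generating function in the non-simply-connected, possibly Maslov-nontrivial setting and extracting the obstruction --- is exactly the content of Kragh's paper and Abouzaid's appendix, and is not carried out here. So even repaired of the circularity, what you have is a pointer to the literature rather than a proof; in the context of this paper that is acceptable (the authors do the same), but the argument as written cannot stand on its own.
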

This theorem is proved by Abouzaid in an Appendix to Kragh's paper \cite{kra}. The proof involves ideas from algebraic topology, namely the construction of Viterbo's transfer map on symplectic cohomology as a map of spectra.

Combining the above theorems we obtain the following corollary which will enable our construction of graph selectors in the next section.

\begin{cor}\label{wFc}
	Let $L$ be an exact, compact Lagrangian in $T^*M$ for $M$ orientable. Then the wrapped Floer cohomology $HW^*(T^*_qM,L)$ is well defined and
	$$HW^*(T^*_qM,L)\simeq\mathbb{Z}.
	$$
\end{cor}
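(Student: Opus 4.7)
The statement is a direct combination of the two theorems quoted immediately before it, so the proof plan is essentially a concatenation with a small check that the hypotheses match.

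First, I would verify that the Floer cohomology $HW^*(T_q^*M, L)$ is well-defined. The construction from Section 3.1 requires: (i) that the Lagrangians involved be exact with vanishing Maslov class so that the $\Z$-grading by Maslov index is defined, and (ii) that they be relatively spin so that the moduli spaces carry coherent orientations. The fiber $T_q^*M$ is contractible, hence trivially exact, has vanishing Maslov class, and (as noted in the excerpt) is relatively spin because $b|_{T_q^*M} = 0$. For $L$, exactness is given by hypothesis; vanishing of the Maslov class is supplied by the Abouzaid--Kragh theorem, which applies precisely to compact exact Lagrangians in $T^*M$. Once the Maslov class vanishes, Abouzaid's theorem part (a) yields that $L$ is relatively spin. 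Both hypotheses needed to define $HW^*(T_q^*M, L)$ are therefore in place.

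Next, with $HW^*(T_q^*M, L)$ well-defined, I would invoke Abouzaid's theorem part (b) directly: since $M$ is orientable, $L$ is compact and exact, and (by the previous step) has vanishing Maslov class, the conclusion $HW^*(T_q^*M, L) \simeq \Z$ is exactly its statement.

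There is no real obstacle here — the entire corollary amounts to recording that the Maslov-class hypothesis in Abouzaid's theorem is automatic by Abouzaid--Kragh, so that one may apply it unconditionally to any compact exact $L \subset T^*M$. The only thing to be slightly careful about is bookkeeping of sign/grading conventions: the excerpt defines $HW_*$ homologically (following \cite{oh:jdg}) while Abouzaid works cohomologically, but as noted in Remark~\ref{rem:sign} the two are identified by reversing the flow, so the isomorphism $HW^*(T_q^*M, L) \simeq \Z$ transports without change.
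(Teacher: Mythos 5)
Your proof is correct and follows exactly the paper's route: the corollary is obtained by combining Abouzaid's theorem (parts (a) and (b)) with the Abouzaid--Kragh theorem, which removes the Maslov-class hypothesis for compact exact Lagrangians. The well-definedness check and the remark on grading conventions match what the paper records.
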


Since $HW^*(T^*_qM,L)\simeq\mathbb{Z}$, we also conclude that
$HW_*(L,T^*_qM)\simeq\mathbb{Z}$.

\section{Graph selectors}

We will now construct a graph selector for any compact, exact Lagrangian $L$ in $T^*M$
when $M$ is orientable. The construction follows the ideas from \cite{oh:jdg} combined with Corollary \ref{wFc}.

\subsection{Choice of Hamiltonian $H$ adapted to $L$}

Recall the general action integral formula \eqref{eq:AAH}
$$
\CA_H(\gamma) = \int_{[0,1]} (\gamma^*\theta - H\circ \gamma\, dt) + h_0(\gamma(0)) - h_1(\gamma(1))
$$
associated to the pair
\beastar
(L_0,h_0) & = & (L,h);\quad L = \iota(N), h = S\circ \iota^{-1}\\
(L_1,h_1) & = & (T_q^*M, 0).
\eeastar
for a fiber $T_q^*M$ at $q \in M$. Here we note that
the constant function $h \equiv 0$ is a primitive of $T_q^*M$ since $\theta|_{T_q^*M} \equiv 0$.

For the discussion of our main interest in this paper, all the Lagrangian submanifolds
$L$ are contained in a compact subset $K$ say, in a disc bundle $D_R^*M$
for some sufficiently large constant $R > 0$. To make our graph selector of $L$ independent of the choice of
Hamiltonian $H$ given in $\CA_H$, we put the following condition on the support of $H$.
\begin{cond}\label{cond:suppH} Let $R > 0$ be as above. We assume that
	\be\label{eq:suppH}
	\supp H \subset T^*M \setminus D_R^*M.
	\ee
\end{cond}
Let $q\in M$ be such that $T^*_qM$ and $L$ are transversal. By our choice of $H$, $X_H =0$ on $D_R^*M$
and hence Proposition \ref{critA} implies that any $x\in \mathcal{X}(L,T^*_qM)$ is a constant path
associated to an intersection point in $L \cap T_q^*M$. Therefore we have a one-to-one correspondence
$$
\mathcal{X}(L,T^*_qM) \cong T^*_qM\cap L.
$$
Moreover, since $\theta$ vanishes on $T^*_qM$, we derive
\be\label{eq:AHh}
\mathcal{A}_H(x)=h(x)
\ee
from \eqref{eq:AAH}.

\subsection{Structure of the wave front of exact Lagrangians}

In this subsection, we give precise description of generic properties of
the wave front of exact Lagrangian $(L,h)$ in relation to the singularities
of the primitive $h$. We denote the wave front set associated to $(L,h)$ by
$$
WF(L,h) = \{(h(x), x) \in \R \times T^*M \mid x \in L\}
$$
which forms a Legendrian submanifold of the 1-jet bundle $J^1(M)\cong \R \times T^*M$.

Let $\pi_L: L \to M$ be the restriction to $L$ of the natural projection $\pi:T^*M \to M$. The \emph{caustic} of $L$ is the set of critical values of $\pi_L$, which we denote by $\text{\rm Caus}(L) \subset M$. Let
\be\label{eq:UL}
\mathcal{U}_L = M \setminus \text{\rm Caus}(L) \subseteq M.
\ee
Note that $\mathcal U_L$ is the set of $q\in M$ such that $T^*_qM$ is transversal to $L$.

Next consider a subset of $\mathcal{U}_L$ defined as
\be\label{eq:Cerf-regular}
\mathcal{U}_L^{Cf} = \{ \ q \in \mathcal{U}_L \ \ \vert \ \ h|_{T^*_qM \cap L} \ \textrm{is injective} \ \}
%E_2 & = & \textrm{pr}_2\left(\{(a,x) \in \R \times L \mid a = h(x), \, dh(x) = 0\}\right)
\ee
We remark that although the primitive $h$ is used in their definitions,
the sets do not depend on the choice of the primitives (for connected $L$). Points in this set are called \emph{Cerf-regular}, see \cite{oh:minimax}.

The following proposition is an application of Sard's theorem. A similar statement is proved in \cite{oh:minimax} in a much more complex context, but for convenience of the reader we give a proof here.

\begin{prop}\label{prop:Cerf}
	Let $L$ be an exact Lagrangian submanifold and $h$ a Liouville primitive. Then $\mathcal U_L^{Cf}$
	is an open, dense subset of $M$ of full measure. Furthermore, for any
	$q \in \mathcal U_L^{Cf}$, $\pi_L^{-1}(q)$ is a finite set $\{x_0,\ldots , x_{k_q}\}$ and there exists an open (connected) neighborhood $U_q$ of $q$ for which we have the decomposition
	$$
	\pi_L^{-1}(U_q) = \coprod_{i=0}^{k_q} V_{x_i}
	$$
	where each $V_{x_i}$ is a an open neighborhood of $x_i$, and we have
	\begin{enumerate}
		\item $\pi_L|_{V_{x_i}}: V_{x_i} \to U_q$ is a diffeomorphism for each $i = 0, \ldots, k_q$.
		\item The primitive $h$ restricts to an injective function on each fiber $\pi_L^{-1}(q')$
		for all $q' \in U_{q}$.
	\end{enumerate}
\end{prop}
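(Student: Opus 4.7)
The approach has two layers: first use Sard's theorem to control $\mathcal U_L$, then exploit exactness of $L$ to upgrade to $\mathcal U_L^{Cf}$.

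First I would handle $\mathcal U_L$ and the local product structure. Openness of $\mathcal U_L$ is immediate since $\text{Caus}(L)$ is closed (the image under the proper map $\pi_L$ of the closed critical locus). Full measure of $\mathcal U_L$ is Sard's theorem applied to $\pi_L\colon L\to M$ between equidimensional manifolds. For $q\in\mathcal U_L$, transversality of $T_q^*M$ and $L$ makes $\pi_L$ a local diffeomorphism near each preimage, and compactness of $L$ forces $\pi_L^{-1}(q)$ to be finite, say $\{x_0,\dots,x_{k_q}\}$. Choosing small disjoint neighborhoods $W_i\ni x_i$ on which $\pi_L$ is a diffeomorphism, the set $L\setminus\bigsqcup_i W_i$ is compact and its image under $\pi_L$ is a closed set not containing $q$; taking $U_q$ to be a connected neighborhood of $q$ disjoint from this image and setting $V_{x_i}:=W_i\cap\pi_L^{-1}(U_q)$ yields the decomposition $\pi_L^{-1}(U_q)=\coprod_{i=0}^{k_q} V_{x_i}$ with each $\pi_L|_{V_{x_i}}$ a diffeomorphism onto $U_q$, giving (1).

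Next I would prove (2) together with the density/full-measure statement. On each $V_{x_i}$, realize $V_{x_i}$ as the graph of a 1-form $\sigma_i$ on $U_q$; since $L$ is Lagrangian $\sigma_i$ is closed, and since $i^*\theta=dh$ the primitives $h_i:=h\circ(\pi_L|_{V_{x_i}})^{-1}$ satisfy $dh_i=\sigma_i$ on $U_q$. The key observation is that disjointness $V_{x_i}\cap V_{x_j}=\emptyset$ inside $T^*M$ translates into $\sigma_i(q')\neq\sigma_j(q')$ for every $q'\in U_q$, equivalently $d(h_i-h_j)$ is nowhere zero. Hence the coincidence locus $\{h_i=h_j\}\subset U_q$ is either empty or, by the implicit function theorem, a smooth codimension-one submanifold, in either case a closed set of Lebesgue measure zero. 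The non-Cerf-regular locus in $U_q$ is the finite union $\bigcup_{i\neq j}\{h_i=h_j\}$, hence has measure zero; covering $\mathcal U_L$ by such $U_q$'s and using that $M\setminus\mathcal U_L$ already has measure zero yields that $\mathcal U_L^{Cf}$ has full measure in $M$, which automatically gives density. Openness of $\mathcal U_L^{Cf}$ then follows because, in each $U_q$, the condition $h_i(q')\neq h_j(q')$ for all $i\neq j$ is open by continuity.

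The step I expect to be the main obstacle is the measure-zero claim for $\{h_i=h_j\}$: for a generic smooth function the zero set may well have positive measure, so this step is not purely soft. What rescues the argument is exactness of $L$, which converts separation of the sheets in the cotangent bundle into the differential condition $d(h_i-h_j)\neq 0$, reducing the measure-zero question to a clean application of the implicit function theorem. Once this is in place, the remaining assertions are routine consequences of compactness and continuity, and no global input beyond compactness of $L$ is needed.
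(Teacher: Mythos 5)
Your proof is correct and follows essentially the same route as the paper: Sard plus compactness for the local product structure over $\mathcal U_L$, and then the crucial observation that exactness turns disjointness of the sheets into $d(h_i-h_j)=\sigma_i-\sigma_j\neq 0$, so each coincidence locus is a regular level set and hence a measure-zero hypersurface (the paper's $\delta_{ij}$ and $\Delta_{ij}$ are exactly your $h_i-h_j$ and $\{h_i=h_j\}$). You also correctly single out the only non-soft step and the reason it works, so nothing is missing.
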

\begin{proof}
	By definition $\mathcal U_L$ is the set of regular values of $\pi_L$, and hence it is open and has total measure by Sard's theorem. Compactness of $L$ then implies that $\pi_L^{-1}(q)$ is a finite set for any $q\in \mathcal U_L$. Again by definition, $\pi_L$ is a local diffeomorphism when restricted to $\mathcal U_L$, which immediately implies the existence of the neighborhoods $U_q$, $V_{x_0}, \ldots, V_{x_{k_q}}$ and property (i) for any point in $\mathcal U_L$. Property (ii) follows from the definition of $\mathcal U_L^{Cf}$.
	
	We are left with showing that $\mathcal U_L^{Cf} \subset \mathcal U_L$ has total measure.
	For this consider 1-forms $\varphi_i$ on $U_q$ for each $i=0,\ldots, k_q$, determined by the equations
	$(q', \varphi_i(q'))= \pi|_{V_{x_i}}^{-1}(q')$ for $i = 0, \ldots, k_q$. Denote $x_i(q') = (q', \varphi_i(q'))$.
	Then for each pair $0\leq i<j\leq k_q$ define the function $\delta_{ij}: U_q \to \R$ by
	$$
	\delta_{ij}(q')=h(q', \varphi_i(q')) - h(q', \varphi_j(q'))
	$$
	and denote its zero set by $\Delta_{ij}=\delta_{ij}^{-1}(0)$.
	Since $h$ is a Liouville primitive we compute
	\beastar
	d_{q'}(h(\cdot, \varphi_i(\cdot))) & = & dh|_{x_i(q')} \circ d\pi_{V_{x_i(q')}}^{-1} = \theta_{x_i(q')}\circ d\pi_{V_{x_i(q')}}^{-1}\\
	& = & p\left(d\pi \circ d\pi_{V_{x_i(q')}}^{-1}\right) = p(x_i(q')) = \varphi_i(q')
	\eeastar
	for each $i=0, \ldots, k_{q_0}$. Therefore for $i \neq j$,
	$$
	d\delta_{ij}(q') = \varphi_i(q')- \varphi_j(q') \neq 0
	$$
	where the last non-vanishing holds by the definition of $\varphi_i$'s. Hence we conclude that the $\Delta_{ij}$ are smooth hyper-surfaces. Then observe
	$$\mathcal U_L^{Cf} \cap U_q = U_q \setminus \bigcup_{i<j} \Delta_{ij},$$
	hence $\mathcal U_L^{Cf} \cap U_q$ is an open set of total measure which implies the desired result.
\end{proof}

%The following definition deserves a name.

%\begin{defn}\label{defn:Cerf-regular} Let $(N,\iota,S)$ be an exact Lagrangian brane and
%$L = \iota(N)$ be as above. We call any point $q \in \mathcal U_L^{Cf} $ \emph{Cerf-regular}
%if it satisfies the property given in Proposition \ref{prop:Cerf}.
%\end{defn}

\subsection{Floer-theoretic graph selectors}

Consider an exact Lagrangian submanifold $(L,h)$ as before.
For each $q \in M$,
we define the subset of $\mathbb{R}$,
\be\label{eq:spectrum}
\Spec(L,h;q) = \{ h(x) \in \R \mid x \in L \cap T^*_qM\}.
\ee
We call it the \emph{spectrum} of $(L,h;q)$.
By compactness of $L$, $\Spec(L,h;q)$ is a finite set whenever
$L$ intersects transversely $T_q^*M$.
%For a general point $q \in M$,
%Sard's theorem implies that $\Spec(L,h;q)$ is still a measure zero subset.
%(See section 12 \cite{oh:book}, \cite{oh:alan} for more detailed discussion
%on general construction of spectral invariants.)

Now we introduce our main

\begin{defn}
	For $q \in \mathcal{U}_L$ we define the spectral invariant of the pair $T^*_qM,\ L$:
	$$
	\rho(L,q)=\min\left\{\lambda\in\mathbb{R}\ |\ (j_\lambda)_*:HW_*^{\leq\lambda}(L,T^*_{q}M)\to HW_*(L,T^*_qM) \textrm{ is surjective}\right\}.
	$$
\end{defn}
It can be seen, using an argument similar to the one in the construction of (\ref{cobordism_arg}) in Section 7, that these values do not depend on the choice of Hamiltonian $H$ as long as it satisfies the support condition given in Condition \ref{cond:suppH}. Since we do not use this fact we will not give a complete proof.
Also $\rho(L,q)$ loosely depends on the choice of $h$, not just on $L$. Since
the $h-h' \equiv const$ for any two generating functions $h, \, h'$ of (connected) $L$,
we ignore this dependence.
\begin{rem}
	We would like to remark that this definition makes sense for any point $q$ not necessarily in $\mathcal{U}_L$, but for these we would have to consider a Hamiltonian that does not satisfy Condition \ref{cond:suppH}.
\end{rem}

Note that since $HW_*(L,T^*_qM)\simeq\mathbb{Z}$, the above condition is equivalent to defining $\rho (L,q)$ as
$$\min_{[\alpha]=[\varphi_*(1)]}\{\nu(\alpha)\}$$ where $\varphi:\mathbb{Z}\to CW_*(L,T^*_qM)$ is any homomorphism that induces an isomorphism on homology.
It follows from this interpretation of $\rho(L,q)$ and
the support hypothesis \eqref{eq:suppH} put on $H$, that the mini-max value $\rho(L,q)$ is realized by the action of some
$x_q \in L \cap T^*_qM$, that is, $h(x_q)=\rho(L,q)$. So we conclude
$$
\rho(L,q) \in \Spec(L,h;q)
$$
for all $q \in \mathcal{U}_L$. Therefore
$(\rho(L,q), x_q) \in WF(L,h)$ for all $q \in M$. (Such property is called the \emph{tightness} of the
mini-max value, for example see Definition 21.5.2 \cite{oh:book}.)

We remark that a priori the definition of $\rho(L,q)$ depends on the family of almost complex structures $\{J_t\}$, used to define the Floer differential. But, by the same argument performed in \cite[Lemma 6.3]{oh:jdg}, we can prove the following

\begin{lem}\label{lem:rhoLq}
	The value $\rho(L,q)$ does not depend on the choice of $\{J_t\}$.
\end{lem}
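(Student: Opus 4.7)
The plan is to adapt the standard continuation argument of \cite{oh:jdg} to the present wrapped setting. Observe first that the generators $\mathcal{X}(L,T^*_qM)$ of $CW_*(L,T^*_qM)$ and their actions $\mathcal{A}_H$ depend only on the triple $(H,L,h)$ and not on the choice of $\{J_t\}$, so as filtered graded abelian groups the complexes built from two choices $\{J_t^0\}$ and $\{J_t^1\}$ are literally identical; only the boundary operator depends on $J$. Hence it suffices to exhibit a filtration-preserving chain homotopy equivalence between the two differentials.

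To that end, I would pick a generic homotopy $\{J_t^s\}_{s \in \mathbb{R}}$ with $J_t^s = J_t^0$ for $s \leq -1$ and $J_t^s = J_t^1$ for $s \geq 1$, and define the continuation map
$$
\Phi^{01}\colon CW_*(L,T^*_qM;J^0) \longrightarrow CW_*(L,T^*_qM;J^1)
$$
by signed count of finite-energy solutions of the parametrized equation
$$
\partial_s u + J_t^s\bigl(\partial_t u - X_H(u)\bigr) = 0, \qquad u(s,0)\in L, \quad u(s,1)\in T^*_qM.
$$
Because $H$ is independent of $s$, the standard energy identity reads
$$
0 \leq \int_{\mathbb{R}\times[0,1]} |\partial_s u|^2_{J^s_t}\, ds\, dt = \mathcal{A}_H\bigl(u(-\infty,\cdot)\bigr) - \mathcal{A}_H\bigl(u(+\infty,\cdot)\bigr),
$$
so any continuation trajectory from $x_0$ to $x_1$ satisfies $\mathcal{A}_H(x_0) \geq \mathcal{A}_H(x_1)$. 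Consequently $\Phi^{01}$ restricts to a map of sub-complexes $CW_*^{\leq \lambda}(L,T^*_qM;J^0) \to CW_*^{\leq \lambda}(L,T^*_qM;J^1)$ for every $\lambda$.

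Reversing the roles of $J^0$ and $J^1$ produces $\Phi^{10}$, and a homotopy-of-homotopies argument yields chain homotopies witnessing $\Phi^{10}\circ \Phi^{01} \simeq \mathrm{id}$ and $\Phi^{01}\circ \Phi^{10} \simeq \mathrm{id}$; the same energy identity forces these homotopies to preserve the filtration as well. Therefore $\Phi^{01}$ induces isomorphisms
$$
(\Phi^{01})_*\colon HW_*^{\leq \lambda}(L,T^*_qM;J^0) \xrightarrow{\;\cong\;} HW_*^{\leq \lambda}(L,T^*_qM;J^1)
$$
compatible with the maps $(j_\lambda)_*$ to the total wrapped Floer homology. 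Since by Corollary \ref{wFc} the latter is $\mathbb{Z}$, and $\Phi^{01}$ induces an isomorphism of $\mathbb{Z}$'s on total homology, it must carry a generator to a generator; hence the least $\lambda$ for which $(j_\lambda)_*$ is surjective is the same for $J^0$ and $J^1$. This gives $\rho(L,q;J^0) = \rho(L,q;J^1)$.

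The truly routine technical inputs are Fredholm regularity for the one-parameter moduli spaces, Gromov compactness, and coherent orientations in the presence of the relative spin structure; the only genuinely delicate point is the uniform $C^0$ bound on continuation solutions, which is the reason one requires $H$ to be quadratic at infinity and $J_t^s$ to satisfy $\theta\circ J_t^s = dr$ outside $K$. This maximum-principle/no-escape argument, already implemented in \cite{AS, abouzaid2} for continuation equations in $T^*M$, is what I would expect to be the main obstacle, but it adapts verbatim because the asymptotic data of our equation is standard.
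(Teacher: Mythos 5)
Your proof is correct and follows essentially the same route as the paper, which simply invokes the continuation-map argument of \cite[Lemma 6.3]{oh:jdg}: the generators and their actions are $J$-independent, the $s$-parametrized continuation map does not increase the action because $H$ is $s$-independent, and the two resulting filtration-preserving inequalities yield $\rho(L,q;J^0)=\rho(L,q;J^1)$.
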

This enables us to define the \emph{basic phase function}  (or Floer theoretic graph selector) $f_L:\mathcal{U}_L\subseteq M \to \mathbb{R}$ by $f_L(q)=\rho(L,q)$.

The following is the main theorem of this subsection and is the counterpart of Theorem 9.1 in \cite{oh:jdg}. Note that, in particular, this proves Theorem \ref{gs} in the Introduction.

\begin{thm}\label{floer gs}
	Let $L$ be a compact exact Lagrangian and define $f_L$ as above. We can extend $f_L$
	to a Lipschitz function on $M$. If $q \in \mathcal{U}^{Cf}_L$ then $f_L$ is smooth at $q$ and we have
	\be\label{q,dfq}
	(q,d f_L(q))\in L\subseteq T^*M\textrm{ and } f_L(q)= h(q,df_L(q)).
	\ee
	Therefore, since $\mathcal{U}^{Cf}_L$ is an open set of total measure by Proposition \ref{prop:Cerf}, $f_L$ is a graph selector for $L$.
\end{thm}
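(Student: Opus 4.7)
The plan is to follow the strategy of \cite{oh:jdg}, adapted to the wrapped setting provided by Corollary \ref{wFc}. The three main ingredients are: (i) the local structure of $L$ over Cerf-regular points (Proposition \ref{prop:Cerf}), (ii) tightness of the mini-max value $\rho(L,q)$ together with the identification \eqref{eq:AHh}, and (iii) a continuity (in fact Lipschitz) estimate for $q \mapsto \rho(L,q)$ coming from chain-level continuation maps.

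First I would set up the local picture near a Cerf-regular point $q_0 \in \mathcal{U}_L^{Cf}$. By Proposition \ref{prop:Cerf} there is a connected neighborhood $U_{q_0}$ and smooth $1$-forms $\varphi_0, \dots, \varphi_{k}$ on $U_{q_0}$ with $\pi_L|_{V_{x_i}}^{-1}(q') = (q', \varphi_i(q'))$ and, by the computation carried out in the proof of that proposition, $d\bigl(h \circ \pi_L|_{V_{x_i}}^{-1}\bigr)(q') = \varphi_i(q')$. Thus $h(q', \varphi_i(q')) = S_i(q')$ for smooth functions $S_i$ with $dS_i = \varphi_i$. After shrinking $U_{q_0}$, Cerf-regularity at $q_0$ forces the values $\{S_i(q')\}_{i=0}^{k}$ to stay pairwise distinct for every $q' \in U_{q_0}$.

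Second, I would prove that $\rho(L, \cdot)$ is locally Lipschitz on $\mathcal{U}_L$. For $q'$ close to $q_0$, I would build a continuation chain map $CW_*(L, T^*_{q_0}M) \to CW_*(L, T^*_{q'}M)$ by deforming the second factor along a compactly supported Hamiltonian isotopy that moves $T^*_{q_0}M$ to $T^*_{q'}M$ (and is still compatible with Condition \ref{cond:suppH}); the $C^0$-norm of the generating Hamiltonian can be taken bounded by $C\,\dist(q_0,q')$ for a constant $C$ depending only on a $C^0$-bound for $L$. Standard action estimates for continuation trajectories then show that the induced isomorphism on $HW_*$ shifts the action filtration by at most $C\,\dist(q_0,q')$, yielding $|\rho(L,q') - \rho(L,q_0)| \leq C\,\dist(q_0,q')$. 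Combined with the tightness statement $\rho(L, q') \in \{S_i(q')\}$ and the separation of the $S_i(q')$ on $U_{q_0}$, continuity forces a single index $i_0$ such that $\rho(L, q') = S_{i_0}(q')$ for every $q' \in U_{q_0} \cap \mathcal{U}_L^{Cf}$. Since $S_{i_0}$ is smooth on $U_{q_0}$, this smoothly extends $f_L$ across the (measure zero) caustic intersection with $U_{q_0}$, and gives $df_L(q') = \varphi_{i_0}(q')$, so $(q', df_L(q')) = x_{i_0}(q') \in L$ and $h(q', df_L(q')) = S_{i_0}(q') = f_L(q')$, establishing \eqref{q,dfq}.

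Finally, to extend $f_L$ to a Lipschitz function on all of $M$, I would use that the local gradient $df_L(q') = \varphi_{i_0}(q')$ is bounded by the (compact) supremum of the fiber norm on $L$, producing a uniform Lipschitz constant on $\mathcal{U}_L$. Since $\mathcal{U}_L$ is dense, $f_L$ admits a unique Lipschitz extension to $M$, which together with the preceding paragraph makes it a graph selector. The step I expect to be the main obstacle is the Lipschitz continuity of $\rho(L, \cdot)$: one must set up the continuation between two wrapped complexes associated to different cotangent fibers while respecting the noncompact, quadratic-at-infinity geometry and the support condition \eqref{eq:suppH}, and then carry out the action-shift estimate. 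This is technical but standard once the Hamiltonian realizing the fiber move is chosen compactly supported with $C^0$-norm proportional to $\dist(q_0,q')$.
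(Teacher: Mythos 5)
Your proposal is correct and follows essentially the same route as the paper: a chain-level continuation argument (moving the fiber $T^*_{q_0}M$ to $T^*_{q'}M$ by a compactly supported Hamiltonian with $C^0$-norm proportional to $d(q_0,q')$) yields the Lipschitz estimate for $\rho(L,\cdot)$, and then tightness plus the separation of the branch values $S_i(q')$ over a Cerf-regular neighborhood forces $f_L$ to coincide with a single smooth branch $S_{i_0}$, from which $df_L = \varphi_{i_0}$ gives \eqref{q,dfq}. The only cosmetic slip is the reference to extending ``across the caustic intersection with $U_{q_0}$'': by Proposition \ref{prop:Cerf} the neighborhood $U_{q_0}$ is already contained in $\mathcal{U}_L^{Cf}$, so no caustic points occur there.
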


The proof of this theorem will occupy the rest of this subsection.
We start with the following proposition whose proof we postpone to Section \ref{sec:lipschitz}.

\begin{prop}\label{prop1 GS}
	Equip $M$ with a metric and denote by $d$ the associated distance function. Consider $q_0,q_1\in \mathcal U_L\subseteq M$ so that $\rho (L,q_0)$ and $\rho(L,q_1)$ is defined. Then
	$$\vert f_L(q_0)-f_L(q_1)\vert\leq C\cdot d(q_0,q_1),$$ for a constant $C>0$ depending only on $L$ and the Hamiltonian $H$.
\end{prop}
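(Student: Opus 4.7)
The plan is to reduce the Lipschitz estimate to the case where $q_0, q_1$ are sufficiently close in $M$, then construct a Hamiltonian isotopy of $T^*M$ that carries $T^*_{q_0}M$ to $T^*_{q_1}M$ and invoke the naturality of filtered wrapped Floer homology with respect to such isotopies. Since $M$ is compact, a uniform local Lipschitz constant on $\mathcal{U}_L$ implies the global bound, so we may assume $q_0$ and $q_1$ lie in a small geodesic ball of $M$. This strategy parallels the classical approach of \cite{oh:jdg}, with the new input being that our Lagrangian $L$ is only assumed compact and exact (not Hamiltonian isotopic to the zero section).

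Connect $q_0$ to $q_1$ by a minimizing geodesic $\gamma:[0,1]\to M$, and choose a time-dependent, compactly supported vector field $Y_t$ on $M$ whose flow sends $q_0$ to $\gamma(t)$, subject to the bound $\|Y_t\|_{C^0(M)}\leq C_1\, d(q_0,q_1)$ for some $C_1$ depending only on $M$. Its cotangent lift
\[
K_t(q,p) \;=\; \langle p,\, Y_t(q)\rangle
\]
generates a symplectic isotopy $\psi^t_K$ of $T^*M$ satisfying $\psi^t_K(T^*_{q_0}M) = T^*_{\gamma(t)}M$. Since $K_t$ is linear in $p$, truncate it smoothly outside the disk bundle $D^*_{R+1}M$ to produce a compactly supported Hamiltonian; this truncation does not affect the flow on $L\subset D^*_R M$ nor on the portions of the fibers lying in $D^*_R M$ that are relevant for the Floer-theoretic computation.

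By the standard filtered naturality/continuation argument for wrapped Floer homology (compare \cite[Lemma~6.3]{oh:jdg}), the complexes $CW_*(L, T^*_{q_1}M;H)$ and $CW_*(L, T^*_{q_0}M;H\#K)$ are canonically chain isomorphic, with the action filtration shift bounded by
\[
\Delta \;\leq\; \int_0^1 \max_{x \in L} |K_t(x)|\, dt \;\leq\; R \int_0^1 \|Y_t\|_{C^0}\, dt \;\leq\; R\,C_1\, d(q_0,q_1),
\]
using $|\langle p, Y_t(q)\rangle|\leq R\,\|Y_t\|_{C^0}$ on $L\subset D^*_R M$. Moreover, this isomorphism carries the distinguished generator of $HW_*(L,T^*_{q_1}M)\cong \Z$ to that of $HW_*(L,T^*_{q_0}M)\cong \Z$, so by the mini-max characterization of $\rho(L,\cdot)$ we conclude $\rho(L,q_1)\leq \rho(L,q_0) + \Delta$. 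Interchanging $q_0$ and $q_1$ (using the time-reversed isotopy generated by $-Y_{1-t}$) gives the opposite inequality, yielding the desired bound with $C=R\,C_1$.

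The main technical obstacle is to set up the filtered naturality isomorphism rigorously, given that the fiber $T^*_q M$ is a non-compact Lagrangian. One must check that, after truncating $K_t$, the relevant continuation moduli spaces are compact (no Floer strips escape along the fiber direction), that the continuation chain map preserves the distinguished class generating $HW_*(L,T^*_qM)\cong\Z$, and that the stated action-shift bound holds with the claimed constant. These verifications are the wrapped/non-compact analogue of the arguments in \cite[Section~6]{oh:jdg}, adapted to the asymptotically conic setting reviewed in Section~4.1 using the Liouville structure \eqref{K}.
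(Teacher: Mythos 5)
Your proposal takes essentially the same route as the paper's proof: a minimizing geodesic from $q_0$ to $q_1$, the cotangent lift $\langle p, X(q)\rangle$ of a vector field extending its velocity, truncation of this Hamiltonian outside a disk bundle containing $L$, a filtered continuation (moving-boundary) chain map inducing an isomorphism on $HW_*(L,T^*_{q}M)\cong\Z$, and the mini-max comparison, producing a constant of the form $R_1\, d(q_0,q_1)$. The two points you defer are exactly where the paper's work lies, and one of them is misstated: the action shift is bounded by the generating Hamiltonian evaluated along the \emph{moving-fiber} boundary values of the Floer solutions (controlled by a maximum-principle $C^0$ bound keeping them in $D^*_{R_1}M$), not by its maximum over $L$ as you write; and the truncated isotopy does not carry $T^*_{q_0}M$ to $T^*_{q_1}M$ near infinity, which the paper remedies by factoring the comparison through an auxiliary deformation supported outside $D^*_RM$ and identifying the resulting complexes via a parametrized-moduli-space cobordism.
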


We can now extend $f_L$ uniquely to a Lipschitz function on $M$ and prove the following

\begin{lem}\label{lem1 GS}
	For any $q\in M$, there exists $x_q\in L\cap T^*_qM$ such that $f_L(q)=\rho(L,q)=h(x_q)$.
\end{lem}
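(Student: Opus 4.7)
The plan is a standard compactness-plus-approximation argument, exploiting the fact that $f_L$ has already been extended continuously (indeed Lipschitz-continuously) from $\mathcal{U}_L$ to all of $M$, combined with the tightness of the mini-max value $\rho(L,q)$ observed earlier.

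First, fix $q \in M$. By Proposition \ref{prop:Cerf}, $\mathcal{U}_L^{Cf} \subset \mathcal{U}_L$ is dense in $M$, so I can choose a sequence $q_n \in \mathcal{U}_L$ with $q_n \to q$. For each such $q_n$, the discussion following the definition of $\rho(L,q)$ (together with the support condition on $H$ in Condition \ref{cond:suppH}) shows that the mini-max value is tight, i.e.\ realized by the $h$-value of an actual intersection point:
$$
f_L(q_n)=\rho(L,q_n)=h(x_n), \qquad x_n \in L \cap T^*_{q_n}M.
$$

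Next, since $L$ is compact, the sequence $\{x_n\}$ admits a subsequence, still denoted $x_n$, converging to some $x_q \in L$. Because $\pi\colon T^*M \to M$ is continuous and $\pi(x_n)=q_n \to q$, the limit satisfies $\pi(x_q)=q$, that is, $x_q \in L \cap T^*_qM$. The primitive $h$ is smooth (in particular continuous) on $L$, so $h(x_n) \to h(x_q)$. On the other hand, $f_L$ is Lipschitz on $M$ by Proposition \ref{prop1 GS}, so $f_L(q_n) \to f_L(q)$. Combining these gives $f_L(q) = h(x_q)$, which is the desired conclusion.

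I do not anticipate any real obstacle: the whole point is that once $f_L$ has been shown to be Lipschitz on $\mathcal{U}_L$ (and hence extends uniquely to a Lipschitz function on $M$), and once the tightness of $\rho(L,q_n)$ on $\mathcal{U}_L$ is recorded, the statement for arbitrary $q \in M$ follows by passing to the limit along a convergent subsequence in the compact set $L$. Note that this also justifies writing $\rho(L,q):=f_L(q)$ for $q$ in the caustic, consistent with the remark that $\rho(L,q)$ can in principle be defined everywhere using a different Hamiltonian.
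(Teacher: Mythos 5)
Your argument is exactly the ``easy continuity argument using denseness of $\mathcal{U}_L$ and compactness of $L$'' that the paper invokes but does not write out: approximate $q$ by points of $\mathcal{U}_L$, use tightness to pick realizing intersection points, extract a convergent subsequence in the compact set $L$, and pass to the limit using continuity of $h$ and the Lipschitz extension of $f_L$. The proposal is correct and takes essentially the same approach as the paper.
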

\begin{proof}
	This is obvious from the construction for points $q \in \mathcal{U}_L$.
	For the other points it follows from an easy continuity argument using denseness of $\mathcal{U}_L$ and compactness of $L$.
	We omit the easy details.\end{proof}

The following stability of
tightness of the values $\rho(L,q)$ is a crucial ingredient in the
chain level Floer theory entering in the study of spectral invariants.
(See \cite{oh:minimax}, especially section 3 thereof, for the illustration of such a usage.)

\begin{prop}\label{prop:tightness}
	Consider $q_0 \in \mathcal U_L^{Cf}$ and take $x_0 \in L \cap T_{q_0}^*M$ such that $h(x_0)=f_L(q_0)$. Let $V_{x_{0}}$ and $U_{q_0}$ be the neighborhoods provided by Proposition \ref{prop:Cerf}
	and for each $q \in U_{q_0}$ denote $x_0(q): = \pi|_{V_{x_0}}^{-1}(q)$. Then we have
	$$
	f_L(q) = h(x_0(q)).
	$$
	Hence, in particular, $f_L$ is a smooth function on $U_{q_0}$.
\end{prop}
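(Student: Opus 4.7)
The plan is to use the Cerf-regularity of $q_0$ to express the spectrum $\Spec(L,h;q)$ near $q_0$ as a disjoint union of finitely many smoothly varying branches, and then to use the Lipschitz continuity of $f_L$ (Proposition \ref{prop1 GS}) together with the tightness property $f_L(q)\in \Spec(L,h;q)$ (Lemma \ref{lem1 GS}) to force $f_L$ to stay on the single branch indexed by $x_0$.

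First, I would apply Proposition \ref{prop:Cerf} to obtain the open neighborhood $U_{q_0}$, the neighborhoods $V_{x_i}$ of the finitely many intersection points $x_0,\ldots,x_{k_{q_0}}\in L\cap T_{q_0}^*M$, and the sections $x_i(q):=\pi|_{V_{x_i}}^{-1}(q)$ for $q\in U_{q_0}$. Define the smooth functions $h_i:U_{q_0}\to\R$ by $h_i(q)=h(x_i(q))$. Then for every $q\in U_{q_0}$, the spectrum \eqref{eq:spectrum} takes the form
\[
\Spec(L,h;q)=\{h_0(q),\ldots,h_{k_{q_0}}(q)\}.
\]

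Next, because $q_0\in \mathcal U_L^{Cf}$, part (ii) of Proposition \ref{prop:Cerf} guarantees that the values $h_i(q_0)$ are pairwise distinct. By continuity of each $h_i$ we may shrink $U_{q_0}$ to an open neighborhood $U\subset U_{q_0}$ of $q_0$ on which there exists $\delta>0$ with
\[
|h_i(q)-h_j(q)|\geq 2\delta \qquad \text{for all } i\neq j,\ q\in U.
\]
The Lipschitz bound of Proposition \ref{prop1 GS} together with continuity of $h_0$ allows us to shrink $U$ further so that $|f_L(q)-f_L(q_0)|<\delta/2$ and $|h_0(q)-h_0(q_0)|<\delta/2$ for all $q\in U$.

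Finally, by Lemma \ref{lem1 GS} we know $f_L(q)\in\Spec(L,h;q)$, so for each $q\in U$ we must have $f_L(q)=h_j(q)$ for some $j=j(q)\in\{0,\ldots,k_{q_0}\}$. If $j\neq 0$ then the separation estimate yields
\[
|h_j(q)-f_L(q_0)|\geq |h_j(q)-h_0(q)|-|h_0(q)-h_0(q_0)|\geq 2\delta-\tfrac{\delta}{2}>\tfrac{\delta}{2},
\]
contradicting $|f_L(q)-f_L(q_0)|<\delta/2$. Therefore $f_L(q)=h_0(q)=h(x_0(q))$ on $U$, and since $h_0$ is smooth this shows that $f_L$ is smooth in a neighborhood of $q_0$, completing the proof. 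The argument is essentially bookkeeping; the only delicate point is ensuring the strict separation of spectral branches, which is exactly what Cerf-regularity at $q_0$ delivers and what makes the Lipschitz continuity of $f_L$ strong enough to pin down the correct branch.
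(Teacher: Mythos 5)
Your core mechanism is exactly the one the paper uses: the spectrum over $U_{q_0}$ splits into finitely many smooth branches $h_i(q)=h(x_i(q))$, the value $f_L(q)$ must lie on one of them by Lemma \ref{lem1 GS}, and the separation of the branches combined with continuity of $f_L$ pins it to the branch through $x_0$. Your local bookkeeping (the $2\delta$ separation and the $\delta/2$ estimates) is correct.

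However, as written your argument only establishes $f_L=h_0$ on a shrunken neighborhood $U\subset U_{q_0}$, because you invoke the injectivity of $h$ on the fiber only at the single point $q_0$ and then propagate it by continuity. The proposition asserts the identity $f_L(q)=h(x_0(q))$ and the smoothness of $f_L$ on \emph{all} of $U_{q_0}$, the neighborhood furnished by Proposition \ref{prop:Cerf}. The repair is cheap and is precisely what the paper does: property (ii) of Proposition \ref{prop:Cerf} gives injectivity of $h$ on the fiber $\pi_L^{-1}(q')$ for \emph{every} $q'\in U_{q_0}$, so your separation-plus-continuity argument shows that the set $U^{=}_{q_0}=\{q\in U_{q_0}\mid f_L(q)=h_0(q)\}$ is open in $U_{q_0}$; it is closed by continuity of $f_L$ and $h_0$, and nonempty since it contains $q_0$; connectedness of $U_{q_0}$ then yields $U^{=}_{q_0}=U_{q_0}$. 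Without this globalization step your proof gives a strictly weaker statement (which, to be fair, would still suffice for the application in Theorem \ref{floer gs}, where only smoothness and the selector identity near each point of $\mathcal U_L^{Cf}$ are needed).
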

\begin{proof}
	Let $\{x_0, \ldots, x_k\}$ be the pre-image of $q_0$, consider the data provided by Proposition \ref{prop:Cerf} and denote $x_{i}(q)= \pi|_{V_{x_i}}^{-1}(q)$, for $i=0,\ldots, k$, so that $x_i(q_0)=x_i$.
	We consider the subset
	$$
	U^=_{q_0} = \{ q \in U_{q_0} \mid f_L(q) = h(x_0(q)) \}.
	$$
	This set is nonempty since $q_0 \in U^=_{q_0}$. By continuity of $f$ and $h\circ x_0$, it is closed
	in $U_{q_0}$. Now we show that it is also open in $U_{q_0}$.
	Let $q' \in U^=_{q_0}$.
	By assumption $h$ is injective in $\pi^{-1}(q')=\{x_0(q'),\ldots,x_k(q')\}$,
	hence there exists $\epsilon>0$ such that
	$$
	|h(x_i(q')) - h(x_0(q'))|> \epsilon
	$$
	for all $i\neq 0$. Since $q' \in U^=_{q_0}$, we have $f_L(q') = h(x_0(q'))$.
	Then by continuity of $f_L$ and $h(x_i(-))$ (for each $i$), we can take a neighborhood $U'_{q'} \subset U_{q_0}$ of $q'$,
	so that $|f_L(q)- f_L(q')|< \epsilon/2$ for all $q \in U_{q'}'$
	and that $|h(x_i(q))-f_L(q')|>\epsilon/2$ for all $q \in U'_{q'}$ and $i\neq 0$.
	By Lemma \ref{lem1 GS}, for each $q$ there is some $i$ such that $f_L(q)=h(x_i(q))$. Hence the previous inequalities force $f_L(q) = h(x_0(q))$ for all $q\in U'_{q'}$ and so $U'_{q'} \subset U_{q_0}^=$. This proves that the set $U^=_{q_0}$ is open.
	Therefore by connectedness of $U_{q_0}$, $U^=_{q_0} = U_{q_0}$, that is,
	$f_L(q) = h(x_0(q))$ for all $q \in U_{q_0}$.
\end{proof}

\vspace{.1cm}
\begin{proof}[Proof of Theorem \ref{floer gs}]
	Proposition \ref{prop1 GS} enables us to extend uniquely $f_L$ to a Lipschitz function on $M$.
	Then Proposition \ref{prop:tightness} implies that $f_L$ is a smooth function on $\mathcal U_L^{Cf}$ and that for $q\in \mathcal U_L^{Cf}$
	and $x \in L$ such that $f_L(q)= h(x)$ we have
	$$
	(q,df_L(q))=x \in L\cap T^*_q M.
	$$
	To see this, we denote $x =(q,p)$ and compute
	\begin{align*}
	df_L(q)&=dh \circ d \pi^{-1}_{V_{x}}
	=(\theta|_L)_{x}\circ d\pi^{-1}_{V_{x}}
	=p(d\pi \circ d\pi^{-1}_{V_{x}})=p,
	\end{align*}
	where we have used that $dh=\theta\vert_{ L}$ and the definition of $\theta$. This finishes the proof of Theorem \ref{floer gs}.
\end{proof}

\begin{rem}
	There might be points $q\in M \setminus \mathcal{U}_L^{Cf}$ where $f_L$ is still differentiable. But for these Equation (\ref{q,dfq}) might not hold. However, it follows from Lemma \ref{nonsmooth} that $(q,d f_L(q))$ lies in the fiberwise convexification of $L$.
	This fact is the key motivation for the notion of generalized graph selector.
\end{rem}

\section{Generalized graph selectors}

In this subsection we weaken the notion of graph selector, following \cite{bernard-santos},
and prove that Lipschitz-exact Lagrangians admit \emph{generalized graph selectors}.

We start by introducing some notation. For a subset $C\subset T^*M$ we denote by $\widehat{C}$ its fiberwise convexification, that is $\widehat{C}_q:=\widehat{C} \cap T^*_q M$ is the convex hull in $T^*_q M$ of $C\cap T^*_q M$. Recall that a point in a convex set $C$ is said to be \emph{extremal} if it is not in the interior of a line segment whose endpoints lie in $C$. Observe that an extremal point in $\widehat{C}$ must lie in $C$.

\begin{defn}\label{def:gen_slector}
	Let $(N,\iota, S)$ be an Lipschitz-exact Lagrangian brane and $L = \iota(N)$.
	Denote $h = S\circ \iota^{-1}$. A \emph{generalized graph selector} for $L$ is a
	Lipschitz function $f: M\to \mathbb{R}$ satisfying the following conditions,
	\begin{itemize}
		\item[\textbf{(a)}] if $f$ is differentiable at $q$ then $(q,df(q))\in \widehat{L}_q$;
		\item[\textbf{(b)}] for each point $q$ where $f$ is differentiable and $df(q)$ is an extremal point of
		$\widehat{L}_q$ we have $f(q)=h(q,df(q))$.
	\end{itemize}
	
\end{defn}

\begin{thm}\label{gen gs}
	Any Lipschitz-exact Lagrangian brane $(N,\iota, S)$, with $N$ compact, $L = \iota(N)$ admits a generalized graph selector.
\end{thm}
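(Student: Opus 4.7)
The plan is to use the approximating sequence provided by Definition \ref{defn:Lip-exact}, apply the smooth graph selector from Theorem \ref{floer gs} to each term, and extract a uniform limit whose gradient is controlled via Young measures.

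\textbf{Step 1: Approximating graph selectors.} By definition there exist equi-Lipschitz sequences of smooth exact Lagrangian embeddings $\iota_k:N\to T^*M$ and primitives $S_k:N\to\R$ with $\iota_k\to\iota$, $S_k\to S$ uniformly. Set $L_k=\iota_k(N)$ and $h_k=S_k\circ\iota_k^{-1}$. Theorem \ref{floer gs} furnishes a Lipschitz graph selector $f_k:=f_{L_k}:M\to\R$ which is differentiable on a full-measure open set with $(q,df_k(q))\in L_k$ and $f_k(q)=h_k(q,df_k(q))$ there.

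\textbf{Step 2: Uniform bounds and passage to the limit.} Since $N$ is compact and $\iota_k\to\iota$ uniformly, all $L_k$ are contained in a common compact set, so a single Hamiltonian $H$ satisfying Condition \ref{cond:suppH} works for every $k$. Examining the constant $C$ in Proposition \ref{prop1 GS} shows it depends only on this common compact set and $H$, whence the family $\{f_k\}$ is equi-Lipschitz. Arzelà--Ascoli delivers a subsequence (still denoted $f_k$) converging uniformly to a Lipschitz $f:M\to\R$; this will be our candidate generalized graph selector.

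\textbf{Step 3: Verification of property (a).} The maps $df_k\in L^\infty(M;T^*M)$ are equi-bounded, and by uniqueness of distributional derivatives, $df_k\to df$ weakly-$\star$ in $L^\infty$ along a further subsequence. These maps also generate a Young measure $(\nu_q)_{q\in M}$. From $(q,df_k(q))\in L_k$ a.e.\ together with the Hausdorff convergence $L_k\to L$ (a consequence of uniform convergence $\iota_k\to\iota$ on the compact $N$), the support of $\nu_q$ lies in $L_q:=L\cap T_q^*M$ for a.e.\ $q$. At any differentiability point $q$ of $f$ one has $df(q)=\int p\,d\nu_q(p)$, the barycenter of a probability measure supported in $L_q$, hence $df(q)\in\widehat L_q$.

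\textbf{Step 4: Verification of property (b).} Let $q_0$ be a differentiability point of $f$ at which $p_0:=df(q_0)$ is extremal in $\widehat L_{q_0}$. Since $p_0$ is the barycenter of $\nu_{q_0}$, which is a probability measure supported in $\widehat L_{q_0}$, extremality forces $\nu_{q_0}=\delta_{p_0}$. Combining the identity $f_k(q)=h_k(q,df_k(q))$ on a full-measure set with uniform convergence $h_k\to h$ on compacts (which follows from $S_k\to S$ and $\iota_k\to\iota$ uniformly) and testing against smooth $\rho$, one passes to the Young-measure limit to obtain $f(q)=\int h(q,p)\,d\nu_q(p)$ a.e. Specializing and using the Dirac concentration at $q_0$ together with a continuity argument (the extremality rigidity confines the Young measure to a single sheet of $L$ near $q_0$, so $q\mapsto h(q,df(q))$ is continuous at such $q_0$) yields the pointwise equality $f(q_0)=h(q_0,p_0)$.

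The main obstacle is \textbf{Step 4}: Young measures are intrinsically an a.e.\ object, so the delicate point is to show that extremality of $p_0$ in $\widehat L_{q_0}$ is robust enough, both to force Dirac concentration at this single point $q_0$ and to upgrade the resulting identity from almost-everywhere to the pointwise statement required by Definition \ref{def:gen_slector}. The equi-Lipschitz hypothesis on the approximating sequence, which is the cornerstone of Definition \ref{defn:Lip-exact}, is exactly what makes these limiting arguments go through; a weaker $C^0$ notion would not suffice to control $df_k$ weakly-$\star$.
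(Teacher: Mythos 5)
Your overall strategy (approximate by the smooth branes $(N,\iota_k,S_k)$, apply Theorem \ref{floer gs} to each $L_k$, show the $f_{L_k}$ are equi-Lipschitz, and pass to a uniform limit) is exactly the paper's; Steps 1 and 2 are fine, and your route to equi-Lipschitzness via the constant in Proposition \ref{prop1 GS} is an acceptable alternative to the paper's more direct observation that $(q,df_{L_k}(q))\in L_k$ with all $L_k$ in a common compact set already bounds $|df_{L_k}|$ uniformly.

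The genuine gap is in Steps 3 and 4, and it is the one you yourself flag: Young measures only encode the limiting behaviour of $df_k$ for \emph{almost every} $q$, whereas Definition \ref{def:gen_slector} demands property (a) at \emph{every} differentiability point of $f$ and property (b) at every differentiability point where $df(q)$ is extremal. Your assertion in Step 3 that ``at any differentiability point $q$ of $f$ one has $df(q)=\int p\,d\nu_q(p)$'' is unjustified — the barycentre identity for the Young measure holds only a.e., and the particular point $q_0$ of interest may lie in the exceptional null set, at which point the Dirac-concentration argument of Step 4 has nothing to act on. The paper closes exactly this gap with Lemma \ref{nonsmooth} (Lemma 2 of \cite{bernard-santos}): one defines, for \emph{every} $q$, the set $\Lambda_q$ of all limits of subsequences $(q_k,df_{L_k}(q_k))$ with $q_k\in V$ and $q_k\to q$ — i.e.\ one harvests gradient information from a full-measure set of \emph{nearby} points rather than from the single point $q$ — and the lemma asserts $(q,df(q))\in\widehat\Lambda_q$ at every differentiability point. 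Property (a) then follows from $\Lambda_q\subset L_q$, and property (b) from the elementary convexity fact that an extremal point of $\widehat\Lambda_q$ must lie in $\Lambda_q$ itself, hence is an honest limit of points $(q_k,df_{L_k}(q_k))$ along which one passes to the limit in the identity $f_{L_k}(q_k)=h_k(q_k,df_{L_k}(q_k))$. To repair your argument you would need either to prove such a pointwise non-smooth-analysis statement yourself or to replace the Young-measure formalism by this neighbourhood-limit construction; as written, the a.e.-to-pointwise upgrade you defer is not a technicality but the actual content of the step.
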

The proof of this theorem follows closely the proof of Proposition 3 in \cite{bernard-santos}
using the graph selector for $L$ constructed in Theorem \ref{floer gs}.
We present the main steps of the proof for the sake of completeness.

The first thing we need is a generalization of a result of non-smooth analysis.

\begin{lem}[Lemma 2 \cite{bernard-santos}]\label{nonsmooth}
	Let $f_k:M\to \mathbb{R}$ be a sequence of equi-Lipschitz functions converging uniformly to a Lipschitz function $f:M\to\mathbb{R}$. Let $V\subset M$ be a set of total Lebesgue measure such that each one of the $f_k$ is differentiable in $V$. We define $\Lambda_q\subset T^*_qM$ to be the set of all limits of subsequences of sequences of the form $(q_k,df_k(q_k))$ with $\{q_k\}\subset V$ and $q_k\to q$. If $f$ is differentiable at the point $q$ then $(q, df(q))\in \widehat\Lambda_q$.
\end{lem}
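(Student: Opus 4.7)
The plan is to argue by contradiction, combining a convex-separation argument with a Fubini-type line-integral estimate. Suppose $df(q)\notin \widehat\Lambda_q$. The equi-Lipschitz hypothesis gives a uniform bound $|df_k(q')|\leq C$ on $V$, so $\Lambda_q$ is compact and $\widehat\Lambda_q$ is a compact convex subset of $T_q^*M$. Since a linear functional attains the same supremum on a bounded set and on its convex hull, the separating hyperplane theorem yields $v\in T_q M$ and $\epsilon>0$ with
\[\langle df(q),v\rangle \;>\; \sup_{p\in\Lambda_q}\langle p,v\rangle \,+\, 3\epsilon.\]
The first step is to upgrade this separation to a uniform pointwise bound: there exist $r,k_0>0$ such that $\langle df_k(q'),v\rangle < \langle df(q),v\rangle - 2\epsilon$ for every $k\geq k_0$ and every $q'\in B_r(q)\cap V$. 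Otherwise one could extract indices $k_j\to\infty$ and points $q_j\in V$ with $q_j\to q$ violating the bound; inserting these into a sequence $\{q_k\}\subset V$ with $q_k\to q$ (using density of $V$) and extracting a convergent subsequence of $(q_k, df_k(q_k))$ via the uniform bound $|df_k|\leq C$, one would produce an element of $\Lambda_q$ violating the separation.

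The second step is the integral inequality. Working in a coordinate chart identifying a neighborhood of $q$ with an open set of $\R^n$, choose $\delta,t>0$ small enough that $q'+sv\in B_r(q)$ for every $q'\in B:=B_\delta(q)$ and $s\in[0,t]$. For each $q'\in B$ the Lipschitz function $s\mapsto f_k(q'+sv)$ is absolutely continuous, so
\[f_k(q'+tv)-f_k(q') \;=\; \int_0^t \tfrac{d}{ds}f_k(q'+sv)\,ds.\]
Since $V$ has full measure and translation preserves Lebesgue measure, Fubini's theorem implies that the set $\{(q',s)\in B\times[0,t] : q'+sv\in V\}$ has full measure in $B\times[0,t]$; on this set the one-dimensional derivative agrees with $\langle df_k(q'+sv),v\rangle$. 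Integrating over $q'\in B$, swapping the order of integration, and invoking the pointwise bound from the first step yields
\[\int_B \bigl[f_k(q'+tv)-f_k(q')\bigr]\,dq' \;\leq\; t\,|B|\,\bigl(\langle df(q),v\rangle - 2\epsilon\bigr).\]

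To close the argument I would pass three successive limits. First let $k\to\infty$ using the uniform convergence $f_k\to f$ to transfer the inequality to $f$. Then let $\delta\to 0$ using continuity of $f$ to reduce the average over $B$ to the single value $f(q+tv)-f(q)$. Finally divide by $t$ and let $t\to 0$, using differentiability of $f$ at $q$, to obtain $\langle df(q),v\rangle \leq \langle df(q),v\rangle - 2\epsilon$, a contradiction. The main delicate point is the line-integral step: a Lipschitz function on $M$ is only guaranteed to be absolutely continuous on each line, and its one-dimensional derivative need not equal $\langle df_k,v\rangle$ at points where $f_k$ itself fails to be differentiable; it is precisely the Fubini averaging over $q'\in B$ that circumvents this obstruction, since for fixed $s$ the set $\{q'\in B : q'+sv\in V\}$ has full measure even though for a particular $q'$ the line $q'+\R v$ might not meet $V$ at all.
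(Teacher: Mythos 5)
Your argument is correct and complete. The paper itself offers no proof of this lemma---it is quoted verbatim as Lemma 2 of Bernard--Oliveira dos Santos \cite{bernard-santos}---and your separation-plus-Fubini argument is essentially the standard proof given in that reference: separate $df(q)$ from the compact convex set $\widehat\Lambda_q$ by a linear form, upgrade the separation to a uniform bound on $\langle df_k(q'),v\rangle$ near $q$ via the definition of $\Lambda_q$ and equi-Lipschitzness, and then contradict differentiability of $f$ at $q$ by averaging line integrals of $f_k$ over a small ball. You correctly identify and dispose of the one delicate point (that a single line $q'+\R v$ may meet the complement of $V$ in a set of positive one-dimensional measure), and the three successive limits $k\to\infty$, $\delta\to 0$, $t\to 0$ are taken in a valid order with all parameters fixed beforehand.
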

\vspace{.05cm}
\begin{proof}[Proof of Theorem \ref{gen gs}]
	Since $(N, \iota, S)$ is an Lipschitz-exact Lagrangian brane we have equi-Lipschitz sequences $S_k:N \to\R$ of smooth functions and $\iota_k: N \to T^*M$ of smooth embeddings such that
	$$
	\iota_k^*\theta_k=d S_k, \ \textrm{and } S_k\to S,\ \iota_k\to \iota\textrm{ uniformly}.
	$$
	As before denote $L_k=\iota_k(N)$ and consider the Liouville primitive $h_k = S_k\circ \iota_k^{-1}: L_k \to \R$.
	
	Theorem \ref{floer gs} gives, for each $(N,\iota_k,S_k)$, a graph selector $f_{L_k}$ of $L_k = \iota_k(N)$. Consider $V= \bigcap_{k} \mathcal{U}_{L_k}^{Cf}$, which is a full measure set where all the $f_{L_k}$ are differentiable,
	\begin{equation}\label{smoothgr}
	(q,df_{L_k}(q))\in L_k \  \ \textrm{and} \ \   f_{L_k}(q)=h_k(q,df_{L_k}(q)),
	\end{equation} for all $q\in V$ and all $k$.
	Since $(q,df_{L_k}(q))\in L_k$ and $\iota_k \to \iota$ uniformly, there exists a
	constant $C> 0$ independent of $k$'s but depending only on $L$ such that
	$$
	\max_{q \in M} |df_{L_k}(q)| \leq C
	$$
	and hence $f_{L_k}$ are equi-Lipschitz.
	Then Ascoli-Arzela Theorem
	implies that the sequence $f_{L_k}$ converges uniformly to a Lipschitz limit $f$.
	
	One can now see that $f$ is a generalized graph selector for $(N, \iota, S)$.  First we prove that $\Lambda_q\subset L_q$ which implies $\widehat\Lambda_q\subset \widehat L_q$. A point $(q,p)$ in  $\Lambda_q$ is a limit of  a subsequence of the form $\{(q_k,df_{L_k}(q_k))\}$ whith $q_k\in V$, $q_k\to q$. By (\ref{smoothgr}) we have $(q_k,df_{L_k}(q_k))\in L_k$. Hence there is $x_k$ such that $(q_k,df_{L_k}(q_k))=\iota_k(x_k)$ and we can assume $x_k$ has a limit $x$. Since $\iota_k$ converges uniformly to $\iota$, we have  $(q,p)=\lim \iota_k(x_k)=\iota(x)\in L_q$. Now applying Lemma \ref{nonsmooth} to the sequence $f_{L_k}$ we conclude that,  if $q$ is a point of differentiability of $f$, $(q, df(q))$ lies in $\widehat\Lambda_q$. Putting these two together shows part (a) of the generalized graph selector statement in its definition.
	
	It remains to see part (b) that $f(q)=h(q,df(q))$ when $df(q)$ is extremal in $\widehat L_q$. Because $\widehat\Lambda_q\subset \widehat L_q$, $df(q)$ is also extremal in $\widehat\Lambda_q$ and hence belongs to $\Lambda_q$. By the definition of $\Lambda_q$ this means there exists a sequence $\{q_k\}\subset V$ such that $(q_k,df_{L_k}(q_k))\to (q, df(q))$. The equality $f(q)=h(q,df(q))$ now follows from taking the limit $k\to \infty$ in (\ref{smoothgr}).
\end{proof}

\section{Applications to Hamiltonian dynamics}

In this section we prove our two applications Theorem \ref{aplic1} and Theorem \ref{aplic2}, following \cite{bernard-santos}. We will concentrate on the differences and when the proofs are straightforward generalizations we omit them.

Let $M$ be a closed orientable $n$-manifold and fix an autonomous $C^2$ Tonelli Hamiltonian $H:T^*M\to \mathbb{R}$ (recall this means positive definite Hessian and superlinear in each fiber). As in the introduction let $\mathcal{L}$ be the set of compact Lipschitz-exact Lagrangians and given $L=\iota(N) \in \mathcal{L}$ we define the maximal invariant subset of $L \cap \{H=a\}$ as
$$
\mathcal{I}_a^*(L):=\bigcap_{t\in\mathbb{R}}\phi_H^t\left(L\cap \{H=a\}\right).
$$
We recall the following standard definition from weak KAM Theory.
\begin{defn} Let $H$ be a Tonelli Hamiltonian on $T^*M$. A Lipschitz function $f:M \to \R$ is called a
	\emph{sub-solution} of the (autonomous) Hamilton-Jacobi equation
	\be\label{HJ-equation}
	H(q,df(q)) = a
	\ee
	if it satisfies $H(q,df(q)) \leq a$ almost everywhere on $M$.
\end{defn}
The following regularization result of Bernard,
which is proved in this context in Appendix B of \cite{bernard-santos}, plays a significant role
in the proof of Theorem \ref{aplic1} similarly as in \cite{bernard-santos}.

\begin{thm}[Theorem 3 \cite{bernard-santos}]\label{lem:bernard}
	Given $a \in \R$, if $f$ is a Lipschitz sub-solution of \eqref{HJ-equation}, then
	there exists a $C^{1,1}$ sub-solution $\widetilde f$ such that
	$\mathcal I_a^*(\Gamma_f) = \mathcal I_a^*(\Gamma_{\widetilde f})$.
\end{thm}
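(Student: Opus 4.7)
The plan is to apply a Lasry--Lions style regularization using the Lax--Oleinik semigroups associated to the Tonelli Lagrangian $L \colon TM \to \R$ dual to $H$. Let $T^-_t$ and $T^+_s$ denote the negative and positive Lax--Oleinik semigroups at level $a$, so for instance
\[
T^-_t u(q) = \inf_{\gamma(t)=q} \left\{ u(\gamma(0)) + \int_0^t \bigl(L(\gamma, \dot\gamma) + a\bigr)\, ds \right\}.
\]
It is standard in weak KAM theory that if $u$ is a Lipschitz sub-solution of $H(q,du(q)) = a$, then $T^-_t u$ is a semi-concave sub-solution at the same level, and dually $T^+_s u$ is a semi-convex sub-solution.

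My first step would be to define $\widetilde{f} := T^+_s T^-_t f$ for suitably small $s, t > 0$. A direct semigroup calculation shows that $\widetilde{f}$ is simultaneously semi-concave and semi-convex (the outer $T^+_s$ preserves semi-concavity inherited from $T^-_t f$, and itself produces semi-convexity), and so $\widetilde{f}$ is $C^{1,1}$ by the Lasry--Lions lemma. It remains a sub-solution of $H = a$ since both semigroups preserve the sub-solution property. This handles the regularity half of the claim.

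The heart of the argument is then the equality $\mathcal{I}_a^*(\Gamma_f) = \mathcal{I}_a^*(\Gamma_{\widetilde f})$, which I would prove via Fathi's characterization: a point $(q_0,p_0) \in \{H=a\}$ lies in $\mathcal{I}_a^*(\Gamma_u)$ for a sub-solution $u$ if and only if the entire Hamiltonian orbit $(q(s),p(s)) = \phi_H^s(q_0,p_0)$ satisfies $p(s) \in \partial u(q(s))$ (the super-differential) for all $s \in \R$, which is in turn equivalent to the projected curve $q(\cdot)$ being \emph{$u$-calibrated} on all of $\R$ at level $a$. The key lemma is that $f$ and $T^-_t f$ calibrate exactly the same complete Euler--Lagrange trajectories at energy $a$: plugging a calibrated trajectory into the infimum defining $T^-_t f$ gives $T^-_t f(q(0)) \leq f(q(0))$, while the sub-solution inequality for $T^-_t f$ combined with the $f$-calibration identity forces the reverse, preserving the calibration. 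A symmetric argument handles $T^+_s$, so the invariant set is unchanged.

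The main obstacle I expect is verifying rigorously that $d\widetilde{f}(q) = p$ for every $(q,p)$ in the common invariant set. Since $\widetilde{f}$ is $C^{1,1}$ and a sub-solution at value $a$, the Clarke gradient collapses to a genuine differential; the calibrated-curve invariance then forces $d\widetilde{f}$ to coincide with the correct momentum coordinate along the invariant set, so that $\mathcal{I}_a^*(\Gamma_{\widetilde f})$ is literally captured by the graph of $d\widetilde{f}$. A secondary technical point is to choose $s$ and $t$ small enough that no asymptotic calibrated orbit is truncated by the semigroup convolutions; the uniform Lipschitz bound on all sub-solutions at energy $a$ (which follows from Tonelli superlinearity) makes this a quantitative, but routine, check.
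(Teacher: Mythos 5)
Your argument is the standard Lasry--Lions regularization via the Lax--Oleinik semigroups, which is precisely the proof of the result the paper invokes: the paper does not prove this theorem at all, but cites it as Theorem 3 and Appendix B of \cite{bernard-santos} (resting on \cite{bernard2}), whose proof is exactly the $T^+_s T^-_t$ construction together with the calibrated-curve argument you describe. The only caution is that the preservation of semi-concavity under $T^+_s$ for small $s$ on a general compact manifold is not ``a direct semigroup calculation'' --- it is the substantive content of Bernard's theorem --- but as a reconstruction of the cited proof your outline, including the key lemma that a complete calibrated orbit of $f$ is calibrated for $T^-_t f$ and conversely, is correct.
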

The statement of this theorem is slightly different from that of
Theorem 3 \cite{bernard-santos} but is equivalent thereto by the argument given in
p.170 \cite{bernard-santos}.
Now we have the following
\begin{thm}\label{main aubry mather}
	Let $H$ be a Tonelli Hamiltonian and $a\in \mathbb{R}$, then for each $(N,\iota,S)\in \mathcal{L}$ such that $L=\iota(N)$ is contained in the energy sub-level $\{H\leq a\}$, there exists $\Gamma\in \mathcal{G}$ contained in the same energy sub-level $\{H\leq a\}$ and such that $$\mathcal{I}^*_a(\Gamma)=\mathcal{I}^*_a(L).$$
\end{thm}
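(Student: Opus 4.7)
The plan is to construct a Lipschitz sub-solution $f$ of the Hamilton--Jacobi equation $H(q,du)=a$ whose maximal invariant set at energy $a$ coincides with $\mathcal{I}^*_a(L)$, then apply Theorem \ref{lem:bernard} to upgrade $f$ to a $C^{1,1}$ sub-solution $\widetilde f$ without changing that invariant set. The $C^{1,1}$ graph $\Gamma:=\Gamma_{\widetilde f}\in\mathcal{G}$ will then automatically lie in $\{H\leq a\}$ by the sub-solution condition and satisfy $\mathcal{I}^*_a(\Gamma)=\mathcal{I}^*_a(L)$.

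First I invoke Theorem \ref{gen gs} to produce a generalized graph selector $f:M\to\R$ for the brane $(N,\iota,S)$. Because $H$ is Tonelli, the set $\{H\leq a\}\cap T^*_qM$ is convex in every fibre, and the hypothesis $L\subset\{H\leq a\}$ forces $\widehat{L}_q\subset \{H\leq a\}\cap T^*_qM$. Combined with Definition \ref{def:gen_slector}(a), this yields $H(q,df(q))\leq a$ almost everywhere, so $f$ is a Lipschitz sub-solution of \eqref{HJ-equation}.

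The crux of the argument is the equality $\mathcal{I}^*_a(\Gamma_f)=\mathcal{I}^*_a(L)$. For the inclusion $\mathcal{I}^*_a(\Gamma_f)\subset \mathcal{I}^*_a(L)$, any orbit $\gamma(t)=(q(t),df(q(t)))$ with $H\equiv a$ has $df(q(t))$ extremal in the strictly convex fibre $\{H\leq a\}\cap T^*_{q(t)}M$; since $df(q(t))\in \widehat{L}_{q(t)}$ sits in this larger convex set, extremality descends to $\widehat{L}_{q(t)}$, and extremal points of a convex hull lie in the original set, so $df(q(t))\in L_{q(t)}$ for every $t$. For the reverse inclusion, take $\gamma(t)=(q(t),p(t))\in\mathcal{I}^*_a(L)$ for all $t\in\R$. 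Proposition \ref{cor:lip pot} applied along $\gamma$, the Legendre identity $p\dot q=\mathcal{L}(q,\dot q)+H(q,p)$ (with $\mathcal{L}$ the Tonelli Lagrangian dual to $H$) and $H\equiv a$ together yield $h(\gamma(t_1))-h(\gamma(t_0))=\int_{t_0}^{t_1}(\mathcal{L}(q,\dot q)+a)\,dt$, while the sub-solution property gives the Fathi-type inequality $f(q(t_1))-f(q(t_0))\leq \int_{t_0}^{t_1}(\mathcal{L}(q,\dot q)+a)\,dt$. Subtracting, $\psi(t):=h(\gamma(t))-f(q(t))$ is non-decreasing and, by compactness of $L$, bounded. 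On any $\omega$-limit orbit $\delta=(q_\delta,p_\delta)$ of $\gamma$, $\psi$ is constant; differentiating this identity and using the strict fibrewise convexity of $H$ together with $H(q_\delta,df(q_\delta))\leq a=H(q_\delta,p_\delta)$ forces $df(q_\delta(t))=p_\delta(t)$ almost everywhere, which makes $df(q_\delta)$ extremal in $\widehat{L}_{q_\delta}$, and then Definition \ref{def:gen_slector}(b) gives $f\circ q_\delta=h\circ\delta$, forcing the constant value of $\psi$ on $\delta$ to be $0$. An analogous argument with $\alpha$-limit orbits makes $\inf\psi=0$, so $\psi\equiv 0$ along $\gamma$, equality holds in Fathi's inequality, $df(q(t))=p(t)$ almost everywhere, and $\gamma\in\mathcal{I}^*_a(\Gamma_f)$.

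Once this identity is established, Theorem \ref{lem:bernard} applied to $f$ produces a $C^{1,1}$ sub-solution $\widetilde f$ with $\mathcal{I}^*_a(\Gamma_{\widetilde f})=\mathcal{I}^*_a(\Gamma_f)$, and $\Gamma=\Gamma_{\widetilde f}\in\mathcal{G}\cap\{H\leq a\}$ satisfies the requirements. The main obstacle is the reverse inclusion above: turning the monotone quantity $\psi$ into the identically zero function requires both the strict fibrewise convexity of $H$ (to extract $df=p$ from the differentiated identity $d\psi/dt=0$ on recurrent orbits) and clause (b) of the generalized graph selector (to pin down the constant value of $\psi$ on $\omega$- and $\alpha$-limit orbits via the identity $f=h$ at extremal points).
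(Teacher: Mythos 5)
Your proposal is correct and follows essentially the same route as the paper: generalized graph selector $\Rightarrow$ Lipschitz sub-solution via fibrewise convexity $\Rightarrow$ Bernard's $C^{1,1}$ regularization (Theorem \ref{lem:bernard}) $\Rightarrow$ the weak KAM calibration argument of Bernard--Oliveira dos Santos, which the paper cites rather than reproduces and which you have written out (the monotone quantity $\psi=h\circ\gamma-f\circ\pi\circ\gamma$, recurrence to limit orbits, extremality plus clause (b) of the generalized graph selector). The only cosmetic difference is that you establish $\mathcal{I}^*_a(\Gamma_f)=\mathcal{I}^*_a(L)$ before regularizing, while the paper regularizes first and proves the identity for $\Gamma_{\widetilde f}$ directly.
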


\begin{proof}
	The proof follows step-by-step the proof of Theorem 1 in \cite{bernard-santos} with the obvious difference that we need to use the new more general version of the generalized graph selector given in Proposition \ref{gen gs}. Because we start with a compact Lipschitz-exact Lagrangian $(N,\iota, S)$ with $L=\iota(N)\subset \{H\leq a\}$, Theorem \ref{gen gs} gives
	a generalized graph selector $f$. Since $H$ is fiberwise convex we have that, whenever defined, $\Gamma_f \subset \{H\leq a\}$.
	Therefore $f$ is a Lipschitz sub-solution of \eqref{HJ-equation}.
	Then Theorem \ref{lem:bernard} implies that there exists a $C^{1,1}$ sub-solution $\tilde f$ such that $\Gamma_{\tilde f}\subset \{H\leq a\}$. One then proves that the graph of $d\tilde f$ is what we are looking for, that is $\mathcal{I}^*_a(\Gamma_{\tilde f})=\mathcal{I}^*_a(L)$. The proof of this statement is now the same as the case covered in Section 3 (and page 170) of \cite{bernard-santos}. It uses standard techniques in weak KAM theory so we do not repeat it.
\end{proof}

We can now prove Theorem \ref{aplic1}.

\begin{proof}[Proof of Theorem \ref{aplic1}]
	We first prove that $\alpha_{\mathcal{E}}(H)=\alpha_{\mathcal{G}}(H)$. Recall that $$\alpha_\mathcal{E}(H):=\inf_{L\in\mathcal{E}}\max_{(q,p)\in L}H(q,p),$$
	$$\alpha_{\mathcal{G}}(H):=\inf_{L\in\mathcal{G}}\max_{(q,p)\in L}H(q,p).$$
	The inequality $\alpha_{\mathcal{E}}(H)\leq \alpha_{\mathcal{G}}(H)$ is true simply because $\mathcal{G}\subset \mathcal{E}$. Now suppose that $\alpha_{\mathcal{E}}(H) < \alpha_\CG(H)$.
	Then there is a $c\in ]\alpha_{\mathcal{E}}(H),\alpha_{\mathcal{G}}(H)[$ such that the set $\{H\leq c\}$ contains an element $L\in\mathcal{E}$. Then by Theorem \ref{main aubry mather} there exists a  $\Gamma\in\mathcal{G}$, $\Gamma\subset\{H\leq c\}$ which contradicts the definition of $\alpha_{\mathcal{G}}(H)$. Combining the two we have
	proved $\alpha_{\mathcal{E}}(H)= \alpha_{\mathcal{G}}(H)$.
	
	The coincidence of the definitions for the Aubry and Ma\~n\'e sets is now a direct consequence of the definitions themselves, the coincidence of the critical values and Theorem \ref{main aubry mather}.
	
	The remaining equalities, for when $L \in\mathcal{L}$, are proven in the exact same way since Theorem \ref{main aubry mather} applies to these Lagrangians.
\end{proof}

Before we prove the second application we need

\begin{lem}\label{inv}
	Consider $(N,\iota,S) \in \mathcal{L}$ such that $L=\iota(N)$ is invariant under the flow of a Tonelli Hamiltonian $H$. Then there exists $e$ such that $L \subset\{H=e\}$.
\end{lem}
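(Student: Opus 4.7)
The plan is to pull $H$ back by $\iota$ and show that $E := H\circ\iota$ is locally constant on $N$; applying this on a connected component (or to each component, gluing with a separate connectedness argument if needed) then yields the claimed energy value $e$. Since $H$ is $C^2$ and $\iota$ is Lipschitz, $E$ is Lipschitz on $N$, so by Rademacher it is differentiable at almost every $y \in N$. It therefore suffices to prove that the a.e.-defined differential $dE_y$ vanishes a.e.

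At a point $y$ where $\iota$ is differentiable, the chain rule gives
\[
dE_y(v) \,=\, dH_{\iota(y)}(d\iota_y v) \,=\, \omega_{\iota(y)}\bigl(X_H(\iota(y)),\, d\iota_y v\bigr),
\]
so the claim reduces to two geometric facts which I will argue hold for almost every $y$. First, by Proposition \ref{prop:iota*omega=0} we have $\iota^*\omega = 0$ as an $L^\infty$ two-form on $N$, which forces $d\iota_y(T_y N)$ to be an isotropic subspace of the symplectic vector space $T_{\iota(y)}T^*M$ a.e. Combined with the fact that the Lipschitz injection $\iota\colon N^n \hookrightarrow T^*M$ has full-rank differential at a.e. $y$ (a consequence of the area formula, since $L = \iota(N)$ has positive $n$-dimensional Hausdorff measure and $\iota$ is injective), this isotropic subspace is of maximal dimension, hence Lagrangian. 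Second, the flow invariance of $L$ furnishes, for every $y$, a smooth curve $t\mapsto \phi_H^t(\iota(y))$ contained in $L$ and passing through $\iota(y)$ with velocity $X_H(\iota(y))$ at $t=0$. By standard geometric measure theory for rectifiable sets, at an approximate tangent point (i.e., a.e.\ point of $L$) the tangent vector of any $C^1$ curve contained in $L$ lies in the approximate tangent plane, here $d\iota_y(T_y N)$.

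Combining the two facts, at a.e.\ $y$ both $X_H(\iota(y))$ and $d\iota_y v$ lie in the Lagrangian subspace $d\iota_y(T_yN)$, so their $\omega$-pairing vanishes and $dE_y(v) = 0$ for every $v$. Since $E$ is Lipschitz on $N$ and its weak derivative vanishes almost everywhere, $E$ is locally constant, and restriction to a connected component yields the desired single value $e$ with $L \subset \{H = e\}$. The main obstacle is the geometric measure theory input in the second fact above, namely that a smooth curve contained in the Lipschitz image $L$ has its velocity in the approximate tangent plane of $L$ at a.e.\ point; this follows from classical tangent-cone results for rectifiable sets (cf.\ Federer, \S3.2), but requires careful matching of $L$ with the rectifiable-set framework and of $d\iota_y(T_yN)$ with the approximate tangent plane. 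Note that the Tonelli hypothesis on $H$ is essentially not used beyond $H$ being $C^1$; the content of the lemma is really a regularity assertion for invariant Lipschitz-exact Lagrangians.
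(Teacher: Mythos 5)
Your argument is essentially the paper's own proof: both pull back $H$ to $E=H\circ\iota$, invoke Rademacher, compute $dE_y(v)=\omega\bigl(X_H(\iota(y)),d\iota_y v\bigr)$, kill this pairing using $\iota^*\omega=0$ from Proposition \ref{prop:iota*omega=0} together with tangency of $X_H$ to $L$ coming from flow-invariance, and conclude that the Lipschitz function $E$ with a.e.\ vanishing derivative is constant. The only difference is that you are more explicit than the paper about the one delicate step --- justifying that $X_H(\iota(y))\in d\iota_y(T_yN)$ for a.e.\ $y$, which the paper simply asserts as ``$X_H(\iota(x))\subset T_{\iota(x)}L$'' --- and your proposed geometric-measure-theory justification of it is the right thing to flag as the remaining technical point.
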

\begin{proof}
	Since $\iota$ is Lipschitz and $H$ is $C^2$, $H\circ \iota$ is Lipschitz and hence differentiable almost everywhere. Proposition \ref{prop:iota*omega=0} implies there is a set $Z$ of total measure where $\iota^*\omega=0$ and both $\iota$ and $S$ are differentiable. Let $x\in Z$, then
	$$d(H\circ\iota)(x)=dH(\iota(x))d\iota(x)=\omega(X_H(\iota(x)),d\iota(x)).$$
	Since $\phi_H^t(L)\subset L$ we know $X_H(\iota(x))\subset T_{\iota(x)}L$.
	Therefore , since $\iota^*\omega=0$ in $Z$, we have
	$$
	\omega(X_H(\iota(x)),d\iota(x))=0.
	$$
	This implies that $H\circ \iota$ is a constant function, which means that there exists $e$ such that $L\subset\{H=e\}$.
\end{proof}	

\begin{proof}[Proof of Theorem \ref{aplic2}]
	Since $L=\iota(N)$ is invariant under the flow of $H$, Lemma \ref{inv} gives $e$ such that $L \subset\{H=e\}$.
	Therefore, since $\mathcal{I}_e^*(L)$ is the maximal invariant subset of $L\cap \{H=e\}$ and $L$ is invariant we have
	$$
	\mathcal{I}_e^*(L)=L.
	$$
	Now by Theorem \ref{main aubry mather} there exists a $\Gamma\in\mathcal{G}$ such that
	$$
	\mathcal{I}_e^*(L)=\mathcal{I}_e^*(\Gamma)\subset\Gamma.
	$$
	Hence $L \subset \Gamma$ and since $M$ is connected  we conclude $L =\Gamma$.
\end{proof}

\section{Lipschitz continuity of the graph selector}
\label{sec:lipschitz}

In this section, we give the proof of Proposition \ref{prop1 GS}.
The proof follows the ideas of Section 12.5 \cite{oh:book} restricted to the case of submanifold $S = \{q\}$, but now in the wrapped context. The main step is to construct a chain map $\Phi: CW_*(L, T_{q_0}^*M) \to CW_*(L,T_{q_1}^*M)$ which induces an isomorphism in homology.
\begin{rem}
	Here we provide full details of this proof because Section 12.5 \cite{oh:book} (and \cite{oh:jdg}) is written in the
	unwrapped setting and because the construction of the homotopy map used in Section 12.5 \cite{oh:book} (which follows Nadler's argument \cite{nadler}) is different from that of the original construction of \cite{oh:jdg}).
	Also some of the details of this proof are not given in \cite{oh:book}.
\end{rem}

We start by taking a minimizing geodesic $c:[0,1] \to M$ from $q_0$ to $q_1$ with length $\ell = d(q_0,q_1)$.
We then consider a small generic perturbation of the path, again denoted by $c$, and extend the derivative $c'$
into an ambient vector field $X$ supported in a small neighborhood $V$ of the image of $c$.
We may choose $X$ so that
\be\label{eq:max|X|}
\max_{q \in M}|X(q)| \leq \ell + \epsilon
\ee
where $\epsilon> 0$ can be chosen arbitrarily small. Consider the autonomous Hamiltonian $G^0=\langle p, X(q)\rangle$  and denote by $\phi_{G^0}^s$ the associated flow. Observe that this defines a Hamiltonian isotopy from  $T_{q_0}^*M$ to $T_{q_1}^*M$, in fact $\phi_{G^0}^s(T^*_{q_0}M)=T^*_{c(s)}M$.

We will deform this isotopy to another isotopy from $T^*_{q_0}M$ to $T^*_{q_1}M$
by expressing it as the composition of two isotopies in the following way.
We choose $R_1> R+1$ (as defined in \eqref{cond:suppH}) such that $H$ is quadratic outside $D^*_{R_1}M$.  Let $\tilde{\chi}:\mathbb{R}\to [0,1]$ be a cut-off function satisfying
$$
\widetilde{\chi}(t)=\begin{cases} 0, \quad & t\leq R\\
1, \quad & t\geq R_1
\end{cases}
$$
and set $\chi = 1 -\widetilde \chi$.
Let $\phi_{\tilde{G}}^s$ be the Hamiltonian flow associated to
$
\tilde{G}(q,p) = \widetilde \chi(|p|) \langle p, X(q)\rangle
$
and denote $\widetilde{N}_s=\phi_{\tilde{G}}^s(T^*_{q_0}M)$. From the construction it follows that $\widetilde{N}_s$ is constant on $D^*_{R}M$. Similarly we define the Hamiltonian
\be\label{eq:G}
G=\chi(|y|) \langle p, X(q)\rangle
\ee
and denote by $\phi_{G}^s$ the associated Hamiltonian flow. We define $N_s=\phi_{G^1}^s(\widetilde{N}_1)$ and note that $N_s$ is constant on $T^*M \setminus D^*_{R_1}M$ and coincides with
$T^*_{c(s)}M$ on $D^*_{R}M$, moreover $N_1=T^*_{q_1}M$ and $N_0 = \widetilde N_1$.

Note that, since $L\subset D^*_{R}M$, the intersection points $L \cap \tilde{N}_s$ are independent of $s\in [0,1]$. Therefore there is an obvious isomorphism of vector spaces
\be\label{cobordism_arg}
\Phi_0: CW_*(L, T_{q_0}^*M) \to CW_*(L, \tilde{N}_{1}).
\ee
Furthermore $\Phi_0$ is a map of chain complexes. To see this we consider the following parametrized moduli spaces
$$
\mathcal{M}^{par}(x_0,x_1) := \bigcup_{s\in [0,1]} \{s\} \times \mathcal{M}^s(x_0,x_1),
$$
where $\mathcal{M}^s(x_0,x_1)$ is the moduli space defined in (\ref{CR}) but with boundary conditions in $L$ and $\tilde{N}_s$. For a generic choice of the almost complex structure $\mathcal{M}^{par}(x_0,x_1)$ becomes a compact smooth one-dimensional cobordism between $\mathcal{M}^0(x_0,x_1)$ and $\mathcal{M}^{1}(x_0,x_1)$. Thus we conclude $\#\mathcal{M}^0(x_0,x_1)= \# \mathcal{M}^{1}(x_0,x_1)$ and therefore $\Phi_0$ is a map of chain complexes leaving the filtration
unchanged.

Now we will define a chain map
$\Phi_{1}: CW_*(L, N_0) \to CW_*(L,T_{q_1}^*M)$ by considering the
Cauchy-Riemann equation with \emph{moving boundary conditions}.

We fix a smooth elongation function $\rho:\R \to [0,1]$ such that
$$
\rho(\tau) = \begin{cases} 0 \quad & \tau \leq 0, \\
1 \quad & \tau \geq 1,
\end{cases}
$$
and $\rho'(\tau) > 0$ on $(0,1)$. In particular, we have $\supp \rho' \subset [0,1]$.

Now we consider
\be\label{eq:movingCR}
\begin{cases}
	\frac{\partial u}{\partial \tau}+J_t\left(\frac{\partial u}{\partial t} - X_{H}(u)\right)=0,\\
	u(\tau,0)\in L, \ u(\tau,1)\in N_{\rho(\tau)}.
\end{cases}
\ee
Given $x_0\in\mathcal{X}(L,N_0)$ and $x_1\in\mathcal{X}(L,T^*_{q_1}M)$ we denote by $\mathcal{N}(x_0,x_1)$ the set of maps $u$ satisfying the above equation and converging (exponentially) to $x_0$ at $-\infty$ and to $x_1$ at $+\infty$. (Here there is no $\R$-action and so we do not quotient out the moduli space).
It again follows from the (strong) maximum principle proved in Section 7.3 \cite{AS} (see alternatively \cite[Lemma 3.3]{abouzaid3}) that solutions of (\ref{eq:movingCR}) satisfy a $C^0$ bound. More precisely we have the following lemma.
\begin{lem}\label{prop:C0bound}
	If $u\in \mathcal{N}(x_0,x_1)$ then $u(\tau, t)\in D^*_{R_1}M$ for  any $(\tau,t)$.
\end{lem}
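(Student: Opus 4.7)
The proof is an application of the Abouzaid--Seidel maximum principle cited in the statement (\cite[\S 7.3]{AS}, \cite[Lemma 3.3]{abouzaid3}); the entire content is to verify its hypotheses on the region $T^*M \setminus D^*_{R_1}M$. Concretely, I would study the function $\psi := r \circ u$ on the open set $\Omega := u^{-1}(T^*M \setminus D^*_{R_1}M) \subset \R \times [0,1]$ and show $\Omega = \emptyset$.

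First I would check four geometric conditions on $T^*M \setminus D^*_{R_1}M$. (i) By our choice of $R_1$ the Hamiltonian $H$ equals $r^2$ there, so $X_H \cdot r = 0$. (ii) The almost complex structure $J_t$ satisfies $\theta \circ J_t = dr$ there, since $D^*_{R_1}M \supset K$. (iii) The $t = 0$ boundary never enters $\Omega$, because $L \subset D^*_R M \subset D^*_{R_1}M$. (iv) For the $t = 1$ boundary, the Hamiltonian $G$ from \eqref{eq:G} satisfies $\chi(|p|) = 0$ outside $D^*_{R_1}M$, so $\phi_G^s = \mathrm{id}$ there; consequently $N_{\rho(\tau)} = \widetilde N_1 = T^*_{q_1}M$ on $T^*M \setminus D^*_{R_1}M$ independently of $\tau$, and this cotangent fiber is conical (invariant under the Liouville flow).

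I would then use Condition \ref{cond:suppH} together with Proposition \ref{critA} to control the asymptotic behavior. Since $H$ is supported outside $D^*_R M$ and $L \subset D^*_R M$, the chords $x_0, x_1$ are constant paths at intersection points inside $D^*_R M$. By exponential convergence, $u(\tau, t) \in D^*_{R_1}M$ for $|\tau|$ sufficiently large, so $\Omega$ is bounded in the $\tau$-direction and, by (iii), disjoint from $\{t = 0\}$.

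Finally, with the hypotheses in hand, the standard computation (combining \eqref{eq:movingCR} with (i)--(ii)) gives the weak subharmonic inequality $\Delta \psi \geq 0$ on $\Omega$. On the portion of $\partial \Omega$ lying in $\R \times (0,1)$ we have $\psi = R_1$ by definition; on $\partial \Omega \cap \{t = 1\}$ the conicity of $T^*_{q_1}M$ together with the boundary version of the Hopf lemma (as set up in \cite[\S 7.3]{AS}) rules out an interior boundary maximum of $\psi$ strictly exceeding $R_1$. Combining these bounds with the asymptotic decay yields $\psi \leq R_1$ on $\Omega$, forcing $\Omega = \emptyset$ as required. The main technical obstacle is the $\tau$-dependence of the Lagrangian boundary condition, but observation (iv) above---that this $\tau$-dependence is entirely confined to $D^*_{R_1}M$---reduces matters to the standard fixed-boundary setting covered by \cite[\S 7.3]{AS}.
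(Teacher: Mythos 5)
Your proposal is correct and takes essentially the same approach as the paper: the paper's entire justification is the remark that the Abouzaid--Seidel maximum principle, stated for fixed boundary conditions, applies verbatim because the moving boundary condition $N_{\rho(\tau)}$ is constant in $\tau$ (and agrees with the conical fiber $T^*_{q_1}M$) outside $D^*_{R_1}M$. You have merely spelled out the verification of the hypotheses that the paper leaves implicit.
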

This is stated in \cite{AS} just for the Cauchy--Riemann equations with fixed boundary conditions, but because our boundary conditions are fixed outside $D^*_{R_1}M$ the same proof applies.

For generic $\{J_t\}$,  when $|x_0|=|x_1|$, the moduli space $\mathcal{N}(x_0,x_1)$ is a smooth oriented manifold of dimension $0$. Gromov compactness together with the above $C^0$ bound then implies that $\mathcal{N}(x_0,x_1)$ is compact and therefore we can define the signed count $\#\mathcal{N}(x_0,x_1)$.
We then define the map
$$
\Phi_{1}(x_0) = \sum_{\substack x_1 \in \CX(L,T_{q_1}^*M),
	\ \ |x_0|=|x_1|} \#\mathcal{N}(x_0,x_1) \cdot x_1.
$$
The proof that this is a chain map which induces an isomorphism in homology is a standard argument in Floer theory that applies to both the wrapped or unwrapped context. We refer the reader to Section 12.5 \cite{oh:book} for complete details.
Putting these maps together we obtain a chain map
$\Phi = \Phi_1\circ \Phi_0: CW_* (L, T^*_{q_0} M) \to CW_* (L,T^*_{q_1} M)$,
which induces an isomorphism $\Phi_*$ in homology.

Now we study the action of $\Phi$ on the filtration $\nu$.

\begin{lem}\label{prop:action-diff}
	Given $x_0\in \CX(L,N_{0})$, $x_1\in \CX(L,T_{q_1}^*M)$ and $u \in \mathcal{N}(x_0,x_1)$ we have
	$$
	\CA_{H}(x_1) - \CA_{H} (x_0) = - \int_\R \int_{[0,1]} \Big|\dudt - X_H(u)\Big|^2_{J_t}\,dt\, d\tau
	+ \int_\R \rho'(\tau) G(u(\tau,1)) \,d\tau,
	$$
	where $|-|_{J_t}$ is the induced metric $\omega( - ,J_t-)$.
\end{lem}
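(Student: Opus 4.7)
The strategy is the standard energy identity argument: express $\CA_H(x_1) - \CA_H(x_0)$ as $\int_\R \frac{d}{d\tau}\CA_H(u(\tau,\cdot))\,d\tau$, where each $\tau$-slice is viewed as a path from $L$ to $N_{\rho(\tau)}$, using the primitive $h$ at $t=0$ and a smoothly varying family of primitives $k_s$ on $N_s$ at $t=1$. Exponential convergence of $u(\tau,\cdot)$ to $x_0$ and $x_1$ as $\tau\to\mp\infty$ makes the boundary evaluation unambiguous.

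For the interior contribution, write $u^*\theta = \theta(u_\tau)\,d\tau + \theta(u_t)\,dt$. The identity $d\theta = -\omega$ implies $\partial_\tau\theta(u_t) - \partial_t\theta(u_\tau) = -\omega(u_\tau,u_t)$, and combined with $dH(u_\tau) = -\omega(u_\tau,X_H)$ this yields
\[
\frac{d}{d\tau}\int_0^1 \bigl(u^*\theta - H(u)\,dt\bigr) = \theta(u_\tau)\big|_{t=1} - \theta(u_\tau)\big|_{t=0} - \int_0^1 \omega(u_\tau, u_t - X_H)\,dt.
\]
At $t=0$, $u(\tau,0)\in L$ and $dh = i^*\theta$ give $\frac{d}{d\tau}h(u(\tau,0)) = \theta(u_\tau(\tau,0))$, which exactly cancels the $t=0$ endpoint.

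For the moving boundary at $t=1$, parametrize $N_s = \phi_G^s(N_0)$ by $\iota_s = \phi_G^s\circ \iota_0$ and set $K_s := k_s\circ\iota_s$. Cartan's formula $\mathcal{L}_{X_G}\theta = d(\theta(X_G)-G)$ gives $(\phi_G^s)^*\theta = \theta + d\int_0^s(\theta(X_G)-G)\circ\phi_G^\sigma\,d\sigma$, so the family of primitives may be chosen so that $\partial_s K_s = (\theta(X_G) - G)\circ \iota_s$. Writing $u(\tau,1) = \iota_{\rho(\tau)}(\xi(\tau))$ for a path $\xi(\tau)\in N_0$ and applying the chain rule together with $dK_s = \iota_s^*\theta$ yields
\[
\theta(u_\tau(\tau,1)) - \frac{d}{d\tau}k_{\rho(\tau)}(u(\tau,1)) = \rho'(\tau)\,G(u(\tau,1)).
\]

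Combining the above with the Floer equation $u_\tau = -J_t(u_t - X_H)$, compatibility of $J_t$ identifies $\omega(u_\tau, u_t - X_H) = |u_t - X_H|^2_{J_t}$, and integration over $\tau\in\R$ produces the claimed identity. The only delicate step is the moving-boundary computation: tracking simultaneously the chain-rule factor $\rho'(\tau)$ and the infinitesimal change $\partial_s k_s$ of the primitive, which conspire to leave exactly $\rho'(\tau)G(u(\tau,1))$; the interior and the $L$-boundary parts are a routine replication of the standard action identity in Floer theory with fixed boundary conditions.
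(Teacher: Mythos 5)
Your argument is correct and is essentially the proof given in the paper: both write the difference of actions as $\int_\R \tfrac{d}{d\tau}\CA_{H,\rho(\tau)}(u(\tau,\cdot))\,d\tau$ for a parametrized action functional with a varying primitive on $N_{\rho(\tau)}$, handle the interior and the $L$-boundary by the standard first variation plus the Floer equation, and extract the $\rho'(\tau)G(u(\tau,1))$ term from the evolution $\partial_s k_s=(\langle\theta,X_G\rangle-G)\circ\phi_G^s$ of the Liouville primitive of $N_s=\phi_G^s(N_0)$. The one point worth making explicit is the normalization of the family $k_s$ at the two ends, i.e.\ that $k_0$ and $k_1$ agree with the primitives used in $\CA_H(x_0)$ and $\CA_H(x_1)$; in the paper this is checked by observing that the formula for $\partial_s k_s$ forces $k_s\equiv 0$ on $D_R^*M$, where $N_s$ coincides with a cotangent fiber and where all the chords lie.
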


Assuming this lemma for the moment, we complete the proof of Proposition \ref{prop1 GS}.
Lemma \ref{prop:action-diff} immediately implies
$$
\CA_{H}(x_1) - \CA_{H} (x_0) \leq \int_{-\infty}^\infty \rho'(\tau) G(u(\tau,1)) \,d\tau \leq \max_{\tau \in \R} G(u(\tau,1)).
$$
It follows from the definition $G(q,p) = \chi(|p|) \langle p, X(q)\rangle$ and \eqref{eq:max|X|} that
$$\max_{\tau \in \R} G(u(\tau,1))\leq R_1 (d(q_0,q_1) + \epsilon).$$
This gives the following key inequality
\be\label{eq:key<}
\CA_{H}(x_1) - \CA_{H} (x_0) \leq  R_1 (d(q_0,q_1) + \epsilon).
\ee

The rest of the argument is standard in the study of any type of spectral invariant. The chain map $\Phi_0$ preserves the filtration and therefore we only need to study what happens for $\Phi_1$. The inequality \eqref{eq:key<} implies that if $\nu(x_0)\leq \lambda$ then $\nu(\Phi_1(x_0))\leq \lambda + \delta$, where $\delta=R_1 (d(q_0,q_1)+\epsilon)$. Therefore the map $\Phi_1$ induces the following commutative diagram
$$
\xymatrix{
	HW_* ^{\leq \lambda } (L, N_0)\ar[r]^{(j_\lambda)_*} \ar[d]^{(\Phi_1)_*} &  HW_* (L, N_0)
	\ar[d]^{(\Phi_1)_*}  \\
	HW_* ^{\leq\lambda +\delta} (L,T^*_{q_1} M)\ar[r]^{\hspace{.5cm}(j_{\lambda+\delta})_*} & HW_* (L,T^*_{q_1} M)}.
$$
Since $(\Phi_1)_*$ on the right hand side is an isomorphism, we conclude
$$
\rho (L,q_1) \leq \rho (L,q_0) + R_1 (d(q_0,q_1) +\epsilon).
$$
Since $\epsilon$ is arbitrary we conclude $\rho (L,q_1) \leq \rho (L,q_0) + R_1 d(q_0,q_1)$. By changing the role of $q_0$ and $q_1$, we prove the other side of the
inequality, which leads to
\be\label{eq:rhoSdifference}
|f_L(q_1) - f_L(q_0)| \leq  R_1 d(q_0,q_1),
\ee
since by definition $f_L(q) = \rho(L,q)$. This completes the proof of the proposition modulo the proof of Lemma \ref{prop:action-diff}.

\begin{proof}[Proof of Lemma \ref{prop:action-diff}]
	We define the following parametrized version of the action functional (defined in \eqref{eq:AAH}):
	\be
	\mathcal{A}_{H,s}(\gamma)=\int_{[0,1]}( \gamma^*\theta - H\circ\gamma \, dt)+h(\gamma(0)) - h_s(\gamma(1)),
	\ee
	for $\gamma \in \mathcal{P}(L,N_{s})$. Here $h_s: N_s \to \R$ is the
	Liouville primitive of $N_s$. Recall that $N_s=\phi^s_G(\tilde{N}_1)$ and denote $S_s=h_s\circ\phi^s_G: \tilde{N}_1\to \mathbb{R}$. An elementary computation (see for example \cite[Proposition 3.4.8]{oh:book}) gives the following formula:
	\be\label{s_primitive}
	S_s=\tilde{h}+\int_0^s (\langle \theta,X_{G}\rangle - G)\circ \phi^t_G \  dt,
	\ee
	where $\tilde{h}$ is the Liouville primitive for $\tilde{N}_1$. Note that, by construction both $\tilde{N}_s$ and $N_s$ agree with cotangent fibers inside $D^*_R M$. Hence this formula shows that $\tilde{h}$ and $h_s$ are always zero inside $D^*_R M$. Therefore, since $N_1=T^*_{q_1}M$, $h_1$ must be zero by uniqueness up to constant of the Liouville primitive.
	In this way, for $x_1\in \CX(L,T_{q_1}^*M)$ we have $\mathcal{A}_{H,1}(x_1)=\mathcal{A}_{H}(x_1)$ and for $x_0\in \CX(L,N_0)$ we have $\mathcal{A}_{H,0}(x_0)=\mathcal{A}_{H}(x_0)$. Therefore for $u \in\mathcal{N}(x_0,x_1)$ we have
	$$
	\CA_{H} (x_1) - \CA_{H} (x_0)
	= \int_{-\infty}^\infty \frac{d}{d\tau}\left(\CA_{H;\rho(\tau)}(u(\tau))\right) \, d\tau.
	$$
	We compute
	\beastar
	\frac{d}{d\tau} (\CA_{H;\rho(\tau)}(u(\tau))) & = &
	\frac{d}{d\tau}\left(\int_{[0,1]} (u(\tau))^*\theta - H(u(\tau,t))\, dt + h(u(\tau,0)) \right)\\
	&{}& - \frac{d}{d \tau}(h_{\rho(\tau)}(u(\tau,1))).
	\eeastar
	For the first summand, we use the first variation with free boundary
	condition (see Equation (2.17) \cite{oh:jdg} for example) together with the fact that $u$ is a solution of \eqref{eq:movingCR}
	and get
	$$
	- \int_{[0,1]} \Big|\dudt - X_H(u)\Big|^2_{J_t}\,dt
	+  \left\langle \theta, \frac{\del u}{\del \tau}(\tau,1) \right\rangle.
	$$
	For the second, we start by noticing that the boundary condition $u(\tau,1)\in N_{\rho(\tau)}$ implies that $u(\tau,1)=\phi^{\rho(\tau)}_G(v(\tau))$ for some curve $v(\tau)\in \tilde{N}_1$. Hence $h_{\rho(\tau)}(u(\tau,1))=S_{\rho(\tau)}(v(\tau))$ and using \ref{s_primitive} we compute
	\begin{align}
	\frac{d}{d\tau} (h_{\rho(\tau)}&(u(\tau,1)))  =  d S_{\rho(\tau)}\left(\frac{d v}{d \tau}\right) + \rho'(\tau)\left(\frac{d S_s}{d s}\right)\bigg|_{s=\rho(\tau)}(v(\tau)) \\
	& = d h_{\rho(\tau)}\circ d \phi^{\rho(\tau)}_G\left(\frac{d v}{d \tau}\right) + \rho'(\tau)\left( \langle \theta, X_G \rangle(\phi^{\rho(\tau)}_G(v(\tau))) - G(\phi^{\rho(\tau)}_G(v(\tau)))\right).\nonumber
	\end{align}
	It follows from the definitions that
	$$\frac{\del u}{\del \tau}(\tau,1)= d \phi_G^{\rho(\tau)}\left(\frac{d v}{d \tau}\right) + \rho'(\tau) X_G(u(\tau,1)).$$
	Plugging this in the previous equation and using the definition of Liouville primitive we obtain
	\beastar
	\frac{d}{d\tau} (h_{\rho(\tau)}(u(\tau,1))) & = &  d h_{\rho(\tau)}\left(\frac{\del u}{\del \tau}(\tau,1)\right) - \rho'(\tau)d h_{\rho(\tau)}\left(X_G(u(\tau,1))\right)\nonumber\\
	& & + \rho'(\tau)\langle \theta, X_G \rangle(u(\tau,1)) - \rho'(\tau)G(u(\tau,1))\nonumber\\
	& = & \left\langle \theta, \frac{\del u}{\del \tau}(\tau,1) \right\rangle - \rho'(\tau)G(u(\tau,1)).
	\eeastar
	Combining the above calculations we conclude
	\be
	\frac{d}{d\tau} (\CA_{H;\rho(\tau)}(u(\tau)))= - \int_{[0,1]} \Big|\dudt - X_H(u)\Big|^2_{J_t}\,dt + \rho'(\tau)G(u(\tau,1)),\nonumber
	\ee
	which immediately proves the lemma.
\end{proof}

We have now finished the proof of Proposition \ref{prop1 GS}.


\begin{thebibliography}{FOOO}

\bibitem[Ab1]{abouzaid} M. Abouzaid.
A cotangent fibre generates the Fukaya category.
\emph{Adv. Math.}
\textbf{228}, no. 2 (2011) 894 -- 939.

\bibitem[Ab2]{abouzaid2}
M. Abouzaid.
Nearby Lagrangians with vanishing Maslov class are homotopy equivalent.
\emph{Invent. Math.}
\textbf{189}, no. 2 (2012) 251 -- 313.

\bibitem[Ab3]{abouzaid3} M. Abouzaid.
On the Wrapped Fukaya category and based loops.
\emph{J. Symplectic Geom.}
\textbf{10}, no. 1 (2012) 27--79.

\bibitem[AS]{AS}
M. Abouzaid \and P. Seidel.
An open string analogue of Viterbo functoriality.
\emph{Geometry \& Topology.}
\textbf{14} (2010) 627 -- 718.

\bibitem[Arn]{arnaud}
 M.-C. Arnaud.
On a theorem due to Birkhoff.
\emph{Geom. Funct. Anal.}
\textbf{120} (2010) 1307 -- 1316.

\bibitem[Be]{bernard1}
P. Bernard.
Symplectic Aspects of Mather theory.
\emph{Duke Math. J.}
\textbf{136}, no. 3 (2007) 401 -- 420.

\bibitem[Be1]{bernard2}
 P. Bernard.
Existence of $C^{1,1}$ critical sub-solutions of the Hamilton--Jacobi equations on compact manifolds.
\emph{Ann. Sci. l'Ecole Norm. Sup.}
\textbf{40}, no. 3 (2007) 445 -- 452.


\bibitem[BO1]{bernard-santos1}
 P. Bernard \and J. Oliveira dos Santos.
A geometric definition of the Aubry-Mather set.
\emph{J. Top. Anal.}
\textbf{2}, no. 3 (2010) 385 -- 393.


\bibitem[BO2]{bernard-santos}
 P. Bernard \and J. Oliveira dos Santos.
A geometric definition of
the Ma\~n\'e-Mather set and a Theorem of Marie-Claude Arnaud.
\emph{Math. Proc. Camb. Phil. Soc.}
\textbf{152} (2012) 167 -- 178.


\bibitem[BS]{buh-sey}
 L. Buhovsky \and S. Seyfaddini.
Uniqueness of generating
Hamiltonians for continuous Hamiltonian flows.
\emph{J. Symp. Geom.}
\textbf{11}, no. 1 (2013) 37 -- 52.


\bibitem[C]{chaperon}
 M. Chaperon.
Lois de conservation et g{\'e}om{\'e}trie symplectique.
\emph{Comptes rendus de l'Acad{\'e}mie des sciences.} S{\'e}rie 1, Math{\'e}matique
\textbf{312}, no. 4 (1991) 345 -- 348.


\bibitem[dR]{deRham}
G. de Rham. \emph{Differentiable Manifolds}, A Series of Comp. Studies in Math. \textbf{266},
Springer Verlag, Berlin-Heidelberg-New York, 1984.

\bibitem[El]{eliash}
 Y. Eliashberg.
 A theorem on the structure of wave fronts and
 its application in symplectic topology (in Russian).
\emph{Funkstsional. Anal. i Prilozhen.}
\textbf{21}, no. 3 (1987) 65 -- 72.


\bibitem[EG]{evans}
L. Evans \and R. Gariepy.
\emph{Measure Theory and Fine Properties of Functions.} (Studies in Advanced Math., CRC Press, New York, 1992).

\bibitem[Fe]{federer} H. Federer. \emph{Geometric Measure Theory}, Classics in Math. Springer-Verlag,
Berlin-Heidelberg-New York, 1969.


\bibitem[FOOO]{fooo:book}
 K. Fukaya, Y.-G. Oh, H. Ohta \and K. Ono.
\emph{Lagrangian intersection Floer theory-anomaly and
	obstruction I - II.} (AMS/IP Studies in Advanced Mathematics, vol {\bf 46},
Amer. Math. Soc./International Press, 2009).


\bibitem[FSS]{FSS}
K. Fukaya, P. Seidel \and I. Smith.
Exact Lagrangian submanifolds in simply-connected cotangent bundles.
\emph{Invent. Math.}
\textbf{172} (2008)  1 -- 27.


\bibitem[Hor]{hormander}
L. H\"ormander.
Fourier integral operators I.
\emph{Acta Math.}
\textbf{127} (1971)  79 -- 183.


\bibitem[HLS]{HLS}
 V. Humili\'ere, R. Leclerq \and S. Seyfaddini.
Coisotropic rigidity and $C^0$-symplectic geometry.
\emph{Duke Math. J.}
\textbf{164}, no. 4 (2015) 767 -- 799.


\bibitem[KO]{kasturi-oh1}
 R. Kasturirangan \and Y.-G. Oh.
Floer homology of open subsets and a relative version of Arnold's conjecture.
\emph{Math. Z.}
\textbf{236}, no. 1 (2001) 151 -- 189.


\bibitem[Kra]{kra}
 T. Kragh.
Parametrized ring-spectra and the nearby Lagrangian conjecture.
\emph{Geometry \& Topology}
\textbf{17}, no. 2 (2013) 639 -- 731.

\bibitem[LauS]{laud-sikorav}
 F. Laudenbach \and J.-C. Sikorav.
Persistence of intersection with the zero section during
a Hamiltonian isotopy into a cotangent bundle.
\emph{Invent. Math}
\textbf{82}, no. 2 (1985) 349 -- 357.

\bibitem[Mu]{mueller}
 S. M\"uller.
The group of Hamiltonian homeomorphisms
in the $L^\infty$-norm.
\emph{J. Korean Math. Soc.}
\textbf{45}, no. 6 (2008) 1769 -- 1784.


\bibitem[N]{nadler}
D. Nadler.
Microlocal branes are constructible
sheaves.
\emph{Selecta Math.}
\textbf{15}, no. 4 (2009) 563 -- 619.

\bibitem[Oh1]{oh:jdg}
 Y.-G. Oh.
Symplectic topology as the
geometry of action functional, I.
\emph{J. Differ. Geom.}
\textbf{46} (1997) 499 -- 577.


\bibitem[Oh2]{oh:cag}
 Y.-G. Oh.
Symplectic topology as the
geometry of action functional, II.
\emph{Commun. Anal. Geom.}
\textbf{7} (1999) 1 -- 55.

\bibitem[Oh3]{oh:alan}
 Y.-G. Oh.
Construction of spectral
invariants of Hamiltonian paths on closed symplectic manifolds in ``The Breadth of Symplectic and Poisson Geometry''.
\emph{Prog. Math.}
\textbf{232} (Birkh\"auser, Boston, 2005) 525 -- 570.


\bibitem[Oh4]{oh:gokova}
 Y.-G. Oh.
Locality of continuous Hamiltonian
flows and Lagrangian intersection with the conormal of open subsets.
\emph{J. G\"okova
	Geom. Top.}
\textbf{1} (2007) 1 -- 32.


\bibitem[Oh5]{oh:minimax}
Y.-G. Oh.
Floer mini-max theory, the Cerf diagram,
and the spectral invariants.
\emph{J. Korean Mah. Soc.}
\textbf{46} (2009) 363 -- 447.


\bibitem[Oh6]{oh:book}
Y.-G. Oh.
\emph{Symplectic Topology and Floer Homology I \& II.}
(New Mathematical Monographs No. 28 \& 29, Cambridge University Press, Cambridge, 2015).


%\bibitem[Oh7]{oh:basic-jkms} Oh, Y.-G., {\em Continuous Hamiltonain dynamics and
%area-preserving homeomorphism group of $D^2$}, J. Korean Math. Soc. {\bf 53} (2016), 793--834.


\bibitem[OM]{oh:hameo1}
 Y.-G. Oh \and S. M\"uller.
The group of Hamiltonian
homeomorphisms and $C^0$ symplectic topology.
\emph{J. Symp. Geom.}
\textbf{5} (2007) 167 -- 219.


\bibitem[PPS]{PPS}
\bibname G. Paternain, L. Polterovich \and K. Siburg.
Boundary rigidity for Lagrangian submanifolds, non-removable intersections,
and Aubry-Mather theory.
\emph{Mosc. Math. J.}
\textbf{3}, no. 2 (2003) 593 -- 619.


\bibitem[Se]{seidel:book}
P. Seidel.
\emph{ Fukaya Categories and
	Picard-Lefschetz Theory.}
(Z\"urich Lec. Advanced Math., European Math.
Soc., Z\"urich, 2008).


\bibitem[Sik]{sikorav:gen}
J. C. Sikorav.
Probl\'emes d'intersections et de points
fixes en g\'eom\'etrie hamiltonienne.
\emph{Comment. Math. Helv.}
\textbf{62} (1987) 62 -- 73.


\bibitem[V1]{viterbo:generating}
 C. Viterbo.
Symplectic topology as the
geometry of generating functions.
\emph{Math. Ann.}
\textbf{292} (1992) 685 -- 710.


\bibitem[V2]{viterbo:unique}
C. Viterbo.
On the uniqueness of
generating Hamiltonian for continuous limits of Hamiltonian flows.
\emph{Internat. Math. Res. Notices},
 (2006) Article ID 34028. Erratum, ibid, (2006), Article ID 38784.

\end{thebibliography}
\end{document}